\newcommand{\be}{\begin{equation}}
\newcommand{\ee}{\end{equation}}
\newtheorem{thm}{Theorem}[section]
\newtheorem{lem}[thm]{Lemma}
\newtheorem{prop}[thm]{Proposition}
\theoremstyle{definition}
\newtheorem{defn}[thm]{Definition}
\newtheorem{rem}[thm]{Remark}
\numberwithin{equation}{section}
\newcommand{\eps}{\varepsilon}
\newcommand{\R}{\mathbb{R}}
\newcommand{\abs}[1]{\left\vert#1\right\vert}
\begin{document}
\title[Spherical interface to Cahn-Hilliard flow]{Solutions with single radial interface of the generalized Cahn-Hilliard flow}
\author{Chao Liu}
\address{School of Mathematics and Information Science, Guangzhou University,
Guangzhou 510006, People's Republic of China}
\email{chaoliuwh@whu.edu.cn}

\author{Jun Yang}
\address{School of Mathematics and Information Science, Guangzhou University,
Guangzhou 510006, People's Republic of China}
\email{jyang2019@gzhu.edu.cn}


\begin{abstract}
We consider the generalized parabolic Cahn-Hilliard equation
$$
u_t=-\Delta\left[\Delta u -W'(u)\right]+W''(u)\left[\Delta u -W'(u)\right]
\qquad \forall\,   (t,  x)\in \widetilde{{\mathbb R}}\times{\mathbb R}^n,
$$
where $n=2$ or $n\geq 4$,  $W(\cdot)$ is the typical double-well potential function and $\widetilde{\mathbb R}$ is given by
$$
\widetilde{\mathbb R}=\left\{
                    \begin{array}{rl}
                      (0,  \infty),  &\quad \mbox{if } n=2,
\\
                      (-\infty,  0),  & \quad\mbox{if } n\geq 4.
                    \end{array}
                  \right.
$$
We construct a radial solution $u(t, x)$ possessing an interface.
At main order this solution consists of a traveling copy of the steady
state $\omega(|x|)$,  which satisfies $\omega''(y)-W'(\omega(y))=0$.
Its interface is resemble at main order copy of the sphere of the following form
$$
|x|=\sqrt[4]{-2(n-3)(n-1)^2t}, \qquad \forall\,   (t,  x)\in \widetilde{{\mathbb R}}\times{\mathbb R}^n,
$$
which is a solution to the Willmore flow in Differential Geometry.
When $n=1$ or $3$,  the result consists trivial solutions.
\end{abstract}

\thanks{Mathematics Subject Classification: {35K30; 35K58}}

\thanks{The work is supported by NSFC(No. 12171109) and Basic Research Project of Guangzhou(No. 202201020094).}


\keywords{Nonlinear parabolic equation; Cahn-Hilliard equation; Interfaces; Radial solutions,}

\date{}

\dedicatory{}

\commby{}

\maketitle
\tableofcontents

\section{Introduction}

We are interested in the construction of  solutions of the following generalized parabolic  Cahn-Hilliard equation
\begin{equation}\label{eq1.1}
  u_t=-\Delta\left[\Delta u -W'(u)\right]+W''(u)\left[\Delta u -W'(u)\right],  \qquad  \forall\,  (t,  x)\in  \widetilde{\mathbb R}\times \R^n,
\end{equation}
where $n=2$ or $n\geq4$,
$\widetilde{\mathbb R}$ is given by
$$
\widetilde{\mathbb R}=\left\{
                    \begin{array}{rl}
                      (0,  \infty),  &\quad \mbox{if } n=2,
\\[2mm]
                      (-\infty,  0),  & \quad \mbox{if } n\geq 4,
                    \end{array}
                  \right.
$$
and the functions $W'(s)$ and $W''(s)$ denote the derivatives of first and second orders of $W$ respectively.
The potential $W(s)$ is a smooth function,  which satisfies
the following assumptions
\begin{equation}\label{eq1.4}
  \left\{
\begin{array}{l}
 W(s)>W(-1)=W(1)\qquad \text{in}\ (-1, 1), \\[1mm]W(s)=W(-s),\qquad \text{for all}\ s\in\R, \\[1mm]
 W'(-1)=W'(1)=0, \\[1mm] W''(-1)=W''(1)>0.
  \end{array}
\right.
\end{equation}
The potential $W(u)$ has two non-degenerate local minimum points $u=+1$ and $u=-1$,  which are stable equilibria of \eqref{eq1.1}.
In particular,  the function $W(s)=\frac{1}{4}(1-s^2)^2$ obviously satisfies the above conditions in (\ref{eq1.4}).
Up to a scaling,  the function $W(s)=\cos(s)$  also satisfies \eqref{eq1.4}.

\subsection{Backgrounds}
Generally speaking,  the Cahn-Hilliard equation means the following equation
\begin{equation}\label{eq1.2}
  \left\{
\begin{array}{l}
u_t=-\Delta\left[\Delta u-W'(u)\right], \qquad \forall\,  (t, x)\in (-\infty, +\infty)\times\Omega,
\\[2mm]
u(0, x)=u_0(x), \hspace{2cm} \forall\,  x\in\Omega,
  \end{array}
\right.
\end{equation}
where $\Omega$ denotes a smooth bounded domain in $\R^n$ or the whole space $\R^n$($n\geq1$),  which describes phase separation processes of binary alloys in \cite{ch}.
Various kinds of problems of this class equation have been extensively studied in recent thirty years.
By applying a priori estimate and continuity argument,  the existence and asymptotic behaviors  of  global smooth solutions of Cauchy problem \eqref{eq1.2} have been proved in \cite{lwz} when the initial value $u_0$ is close to stable equilibria $\bar{u}$ ($W(\bar{u})=0$) in the $L^\infty\cap L^1(\R^n)$ space.
Using Fourier transform  and estimates on the kernel of a linear parabolic operator,   a uniform  $L^\infty$ bound estimate for solutions of perturbed Cauchy problem \eqref{eq1.2} with additional nonlinear terms,  was established by Caffarelli and Muler in \cite{cm}.
In a bounded domain,  the global well-posedness and long-time behavior of solutions to problem \eqref{eq1.2} with several types of dynamic boundary conditions were studied in \cite{ cgw, cgm,  gms, lw,  rz}.

Based on the works of De Giorgi in \cite{d, de},   numerous authors \cite{bm, rs, t} studied the diffuse approximation of the Willmore functional:
 \begin{equation}\label{wf}
   \mathcal{W}(S, \Omega)=\frac{1}{2}\int_{\partial S\cap\Omega}\abs{H_{\partial S}(x)}^2 {\mathrm d}\mu^{n-1},
 \end{equation}
 where $\Omega$ is a given open set in $\R^n$,  the set $S\subset\R^n$ with smooth boundary $\partial S\subset \Omega$,  and $H_{\partial S}(x)$ is the mean curvature of surface $\partial S$ at point $x\in \partial S$. The approximating functional is defined by
 \begin{equation}
   \mathbb{W}_\varepsilon(u)=\left\{
\begin{array}{l}
\frac{1}{2\varepsilon}\int_{\Omega}\Big(\varepsilon\Delta u -\frac{W'(u)}{\varepsilon}\Big)^2{\mathrm d}x,\qquad \text{if}\ u\in L^1(\Omega)\cap W^{2, 2}(\Omega),
\\[2mm]
+\infty,\hspace{3.9cm} \text{otherwise in}\ L^1(\Omega),
  \end{array}
\right.
\label{CHEnergy}
 \end{equation}
 where the function $W$ satisfies \eqref{eq1.4}. The essential and more challenging work is to  rigourously prove that the approximating functional
 $\mathbb{W}_\varepsilon(u)$  $\Gamma$-converges to the Willmore functional $ \mathcal{W}(S, \Omega)$ as $\varepsilon$ goes to $0$. Bellettini
  and Paolini in \cite{bp} proved the $\Gamma$-lim sup inequality for smooth Willmore hypersurfaces. However,  the $\Gamma$-lim inf inequality is
 more hard to prove. Up to now,  it has been proved in $\R^n$ with $n=2, 3$ in \cite{rs} or $n=2$ in \cite{nt}. This problem is still open in $\R^n$ with $n\geq4$.
The relation of the critical points of \eqref{wf} and \eqref{CHEnergy} was exemplified by M. Rizzi \cite{r} and also A. Malchiodi, R. Mandel, M. Rizzi \cite{MalchiodiMandelRizzi}.
On the other hand,  for the parabolic Cahn-Hilliard equation \eqref{eq1.2},  the gamma convergence results  have obtained by Le in \cite{l} under suitable conditions. And see \cite{bb} for the case of \eqref{eq1.2} on the one-dimensional torus.

The $L^2$ gradient flow of the approximating energy $\mathbb{W}_\varepsilon (u)$  is equivalent to the evolution equation:
\begin{equation}\label{eq1}
  \partial_tu_\varepsilon=-\Delta\Big[\Delta u_\varepsilon -\frac{1}{\varepsilon^2}W'(u_\varepsilon)\Big]
+\frac{1}{\varepsilon^2}W''(u_\varepsilon)\Big[\Delta u_\varepsilon -\frac{1}{\varepsilon^2}W'(u_\varepsilon)\Big]
\qquad  \text{in}\ (-\infty, +\infty)\times \Omega,
\end{equation}
which was introduced in \cite{dlw}  to describe the deformation of a vesicle membrane under
the elastic bending energy,  with prescribed bulk volume and surface area. The well-posedness of the phase field model \eqref{eq1} with fixed $\varepsilon$ has been proved in \cite{cl} providing a volume constraint for the average of $u$,  or in \cite{cl1} with both volume and area constraints. By applying formal method of matched asymptotic expansions,  Loreti and March in \cite{lm} (or Wang in \cite{w}) showed that if $\Gamma(t)\subset \R^n$ with $n=2$ or $3$,  is a family of compact closed smooth interfaces and evolves by Willmore flow,  it can be approximated by nodal set of the solution $u_\varepsilon$ to the phase field (\ref{eq1}) when $\varepsilon$ goes to $0$. The Willmore flow equation is given by
 \begin{equation}\label{willmore}
   V(t)=\Delta_{\Sigma(t)}H-\frac{1}{2}H^3+H\|A\|^2,
 \end{equation}
 which is the $L^2$ gradient flow for \eqref{wf} with $\partial S(t)=\Sigma(t)$,  where $V(t)$ denotes the outer normal velocity at $x \in \Sigma(t)$,  $\Delta_{\Sigma(t)}$ is the Laplace-Beltrami operator on surface $\Sigma(t)$,  $H$ and $A$ are the mean curvature and the second fundamental form of $\Sigma(t)$ respectively,  and $\|A\|^2$ is the sum of squared coefficients of $A$. Fei and Liu \cite{fl} proved that for given a solution $\Gamma_0(t)$ of \eqref{willmore} in $\R^n(n=2, 3)$,  there exists a solution $u_\varepsilon$ of equation \eqref{eq1} with Neumann boundary condition such that its level set convergence to $\Gamma_0(t)$ as the parameter $\varepsilon$ goes to zero.
 Moreover,  a variety of problems for the Willmore flow \eqref{willmore} has been  investigated by Kuwert and Sch\"{a}tzle in several papers (for example see \cite{ks, ks1, ks2}). However,  the study of connections between equation \eqref{eq1} and the Willmore flow \eqref{willmore} is a  challenging work when $n\geq4$. The present paper is one of the first attempts in this direction.

\subsection{Main results}
In this paper,  we want to find solutions of (\ref{eq1.1}) whose values lie at all times in $[-1, 1]$,
and approach either $+1$ or $-1$ in the most of the space $\R^n$.
This type of solution
corresponds to a continuous realization of a material,  in which the two states
($u=- 1$ and $u=+1$) coexist.  The main difficult point of the study of this type solution
of (\ref{eq1.1}),  is to derive qualitative information on the interface region(the walls
separating the two phases).
It is easy to find that $u(t, x)$ is a solution of \eqref{eq1.1} if and only if $u_\eps(t, x):=u(\eps^{-4}t, \eps^{-1} x)$ satisfies equation \eqref{eq1}. Basing on the results in \cite{lm} or \cite{w} with $n=2$ and $3$,   we know that the nodal set of $u_\varepsilon(t, x)$ approximates to the solution of Willmore flow \eqref{willmore}. Let us
consider the sphere $\Gamma_n(t)$ evolving by the Willmore flow in \eqref{willmore}.
Then we have that the radius $\gamma_n(t)$ of $\Gamma_n(t)$ satisfies the equation
\begin{equation}\label{willmoreflowradial}
  \gamma_n'(t)=-\frac{1}{2}\Big(\frac{n-1}{\gamma_n(t)}\Big)^3+\frac{\big(n-1\big)^2}{\big(\gamma_n(t)\big)^3},
\end{equation}
which has a solution
\begin{equation}\label{sphere eq}
\gamma_n(t):=\sqrt[4]{-2(n-3)(n-1)^2t},
\end{equation}
where $t\leq0$ when $n\geq 3$ and $t\geq0$ when $n=2$. Due to the self-similarity,  there holds that the sphere $\abs{x}=\gamma_n(t)$  is also the transition layer (nodal set) for $u(t, x)$,  which is a solution of \eqref{eq1.1}.

Our aim is to construct  solutions to equation \eqref{eq1.1} with one transition layer closes to the sphere $\abs{x}=\gamma_n(t)$. In which,  $t\leq0$ when $n\geq 4$,  these solutions are called ancient solutions. And $t\geq0$ for $n=2$,  they are long time solutions.
We shall mention that the ancient radially symmetric solutions for the parabolic Allen-Cahn equation
\begin{equation}\label{Allen}
  u_t=\Delta u+u-u^3 \qquad \text{in} \ (-\infty, 0]\times\R^n,
\end{equation}
have been obtained by del Pino and Gkikas in \cite{dG1} for $n=1$ and \cite{dG2} with $n\geq2$.

Let us introduce a layer function by considering the following problem of semilinear elliptic equation
  \begin{equation}\label{eqq}
    \omega''(y)-W'\big(\omega(y)\big)=0, \quad \omega'(y)>0,  \quad y\in\R, \quad
 \omega(0)=0\quad \text{and} \quad \lim\limits_{y\rightarrow\pm\infty}\omega(y)=\pm 1,
  \end{equation}
 which has  a unique smooth solution $\omega(y)$ obtained in \cite{ac},  where $W$ satisfies the conditions in (\ref{eq1.4}).
In particular if we choose $W'(s)=s^3-s$,  then (\ref{eqq}) is the elliptic Allen-Cahn equation,  and its solution is written as
$$
\omega(y)=\tanh(\frac{y}{\sqrt{2}}).
$$
 For general functions $W$ satisfying \eqref{eq1.4},  the solution $\omega$ of
 (\ref{eqq}) has no explicit expression.  In \cite{ac},  the inverse function of $\omega$ is given by
 \begin{equation}\label{eqq2}
 \lambda(s):=\int^s_{0}\frac{1}{\sqrt{2(W(\tau)-W(1))}}\mathrm{d}\tau, \qquad s\in(-1, 1).
\end{equation}
Thanks to (\ref{eq1.4}),  the function $\lambda(s)$ is well-defined. Taking $\omega$ as a basic layer
(the name layer is motivated by the fact that $\omega$ approaches the limits $1$ and $-1$ at $\pm \infty$),  we will construct a
solution of (\ref{eq1.1}) with a `transition layer' which is symmetric about the sphere $\abs{x}=\gamma_n(t)$ in \eqref{sphere eq}.

More precisely,   we want to find a solution of equation \eqref{eq1.1},  which has the following asymptotical behavior
\begin{equation}\label{eq1.12}
u(t, x)\approx \omega\big(\abs{x}-\rho(t)\big),  
\end{equation}
where $\omega$ is given by (\ref{eqq}).
The function $\rho(t)$ satisfies
\begin{equation}\label{eq1.13}
 \rho(t)=\gamma_n(t)+h(t), \qquad \text{with}\ h(t)=O\left(\frac{1}{\log\abs{t}}\right),
\end{equation}
where $h(t)$ is an $C^1$ function with respect of $t$ and
the function $\gamma_n(t)$ is defined by \eqref{sphere eq}.
In fact,  $\rho(t)$ can be chosen by solving the following ODE
\begin{equation*}
 \rho'(t)+\frac{(n-3)(n-1)^2}{2\rho^3(t)}=Q\big(\rho(t), \rho'(t)\big), \qquad \text{with}\ Q\big(\rho(t), \rho'(t)\big)=O\left(\frac{1}{\abs{t}\big[\log\abs{t}\big]^{2(p-1)}}\right),
\end{equation*}
for all $t\leq0$ when $n\geq4$ or $t\geq0$ when $n=2$,  where $p\in(n, n+1]$,  see Section \ref{sec:tc}.

 Our main results can be stated as follows.
\begin{thm}\label{thm1}
When $n=2$,  there exists a radial solution $u(t, \abs{x})$ of equation $(\ref{eq1.1})$ with $t\geq0$,  which has  the form $u(t, x)\approx \omega\big(\abs{x}-\rho(t)\big)+\phi(t, \abs{x})$,  with
\begin{equation}\label{1.4}
\rho(t)=\sqrt[4]{2t}+h(t).
\end{equation}
Moreover,  $h(t)$ is a $C^1$ function with the decay of order $O(\frac{1}{\log\abs{t}})$ as $t\rightarrow+\infty$ and $\lim\limits_{t\rightarrow +\infty}\phi(t, \abs{x})=0$ uniformly in $x\in \R^2$.
\qed
\end{thm}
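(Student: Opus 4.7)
The approach is modulated perturbation combined with an infinite-dimensional Lyapunov--Schmidt reduction, in the spirit of del Pino--Gkikas \cite{dG2} for Allen--Cahn, adapted to the fourth-order Cahn--Hilliard flow.

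First, I would adopt the ansatz $u(t,x) = \omega(y) + \phi(t,r)$ with $r=|x|$, $y = r - \rho(t)$, and $\rho$ a $C^1$ modulation parameter to be determined. Using $\omega'' = W'(\omega)$, the auxiliary quantity for the pure layer is
\[
\Delta \omega - W'(\omega) = \frac{n-1}{r}\,\omega'(y),
\]
and applying $W''(\omega)\omega' = \omega'''$ the terms in $\omega'''/r$ cancel when one forms $-\Delta[\,\cdot\,] + W''(\omega)[\,\cdot\,]$, leaving the pointwise error of $\omega(y)$ against (\ref{eq1.1}) in the form
\[
E_0[\rho] = -\rho'(t)\omega'(y) - \frac{(n-1)(n-3)}{r^3}\omega'(y) + \frac{(n-1)(n-3)}{r^2}\omega''(y).
\]

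Second, I would project $E_0[\rho]$ against the zero mode $\omega'(y)$ using the radial weight $r^{n-1}$. Expanding $r^{n-1}$, $1/r^2$ and $1/r^3$ about $r=\rho$ and using the evenness of $\omega'$ (so that $\int y(\omega')^2\,dy=0$) together with the identity $\int y\,\omega'\omega''\,dy = -\tfrac12 \int (\omega')^2\,dy$, the contributions of orders $1/\rho$ and $1/\rho^2$ cancel exactly and a single net term of order $1/\rho^3$ survives. This produces the reduced equation
\[
\rho'(t) + \frac{(n-3)(n-1)^2}{2\,\rho(t)^3} = Q(\rho,\rho'),
\]
matching (\ref{willmoreflowradial}), where $Q$ collects $O(\rho^{-4})$ remainders and the coupling to $\phi$. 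For $n=2$ this is $\rho' - \tfrac{1}{2}\rho^{-3} = Q$, whose unperturbed solution is $\rho_*(t) = \sqrt[4]{2t}$. Writing $\rho = \rho_* + h$, I would solve for $h$ by a contraction mapping on $[T_0,\infty)$ in a weighted $C^1$-norm of the form $\|h\|_* = \sup_{t\geq T_0}\big[(\log t)\,|h(t)| + t(\log t)\,|h'(t)|\big]$, yielding $h(t) = O(1/\log t)$ as $t\to+\infty$, as claimed in (\ref{1.4}).

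Third, conditional on the modulation $\rho$, the remaining equation for $\phi$ takes the form $\mathcal{L}\phi = -E_0 + N(\phi,\rho)$, where $\mathcal{L}$ is the linearization of the Cahn--Hilliard operator at $\omega(y)$ and $N$ collects all nonlinear and off-diagonal terms. The transversal operator $\mathcal{L}_0\psi = -\psi''+W''(\omega)\psi$ has one-dimensional kernel spanned by $\omega'$; I would therefore enforce the orthogonality
\[
\int_{\R}\phi(t,\rho(t)+y)\,\omega'(y)\,dy = 0
\]
to restrict to the transversal subspace on which the problem is coercive. An inverse $\mathcal{L}^{-1}$ can be built in weighted spaces combining the exponential spatial decay away from $\{y=0\}$ inherited from the spectral gap of $\mathcal{L}_0$ with the time decay $t^{-3/4}$ dictated by the slow velocity $\rho_*'(t)\sim t^{-3/4}$. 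A contraction mapping in the product space of admissible $(\phi,h)$'s then delivers the solution with $\phi(t,r)\to 0$ uniformly as $t\to+\infty$.

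The main obstacle is the fourth-order nature of (\ref{eq1.1}): the operator $\mathcal{L}$ does not factor cleanly into two Allen--Cahn type pieces because of the multiplicative coupling $W''(u)(\Delta u - W'(u))$, and closing the scheme requires uniform control of up to four spatial derivatives at the moving front. Designing weighted parabolic norms that both capture the very slow motion of the front in the long-time regime ($\rho_*(t)\sim t^{1/4}$, so the interface is asymptotically flat on every fixed scale) and deliver the $O(1/\log t)$ decay of $h$ while making the fixed-point argument converge is where I expect the bulk of the technical work to lie.
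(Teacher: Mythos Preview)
Your high-level strategy---Lyapunov--Schmidt reduction with a modulation parameter $\rho$, projection onto $\omega'$, and fixed-point arguments for both $\phi$ and $h$---matches the paper's, and your derivation of the reduced ODE $\rho'+\tfrac{(n-3)(n-1)^2}{2\rho^3}=Q$ is correct. Two concrete technical ingredients are missing, however, and both are precisely what the paper singles out as the new difficulties relative to the second-order Allen--Cahn case you model the argument on.

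First, the bare layer $\omega(r-\rho)$ is not a good enough approximate solution. Your own error $E_0$ contains the term $\frac{(n-1)(n-3)}{r^2}\omega''(y)$, which for $n=2$ equals $-r^{-2}\omega''(y)$ and has size $\rho^{-2}\sim t^{-1/2}$ near the interface. Because $\int\omega''\omega'\,dy=0$ this term does not disturb the reduced equation for $\rho$, but it forces $\phi$ to be of size at least $t^{-1/2}$, which is inconsistent with the $t^{-3/4}$ time weight you propose and, more importantly, too large for the remainder $Q$ to yield $h=O(1/\log t)$. The paper resolves this by adding an explicit correction $\frac{(n-1)(n-3)}{r^2}\,\widetilde\omega(r-\rho)$ to the ansatz, where $\widetilde\omega$ is the odd, exponentially decaying solution of $\big(-\partial_{yy}+W''(\omega)\big)^{2}\widetilde\omega=-\omega''$; this cancels the slow $t^{-1/2}$ piece pointwise and brings the error down to a workable size. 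A cutoff near $r=0$ is also inserted so that the approximate solution satisfies the radial boundary conditions $u_r(t,0)=u_{rrr}(t,0)=0$, which $\omega(r-\rho)$ alone does not.

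Second, the exponential spatial weights you propose do not survive the fourth-order linear flow. The biharmonic heat kernel decays only like $\exp\big(-c\,|x|^{4/3}t^{-1/3}\big)$, and the paper states explicitly (see its Remark after the error lemma) that the linearized parabolic kernel does not carry exponential-in-$y$ decay, only polynomial. Accordingly the paper works with a polynomial weight
\[
\Phi(t,r)\;\sim\;\frac{\log|t|}{|t|^{1/2}}\Big(1+\big|r-\gamma_n(t)-\tfrac{1}{4\alpha}\log|t|\big|\Big)^{-p},\qquad p\in(n,n+1],
\]
and proves the key a~priori estimate for the projected linear problem not via the spectral gap of $\mathcal L_0$ but by a blow-up/contradiction argument combined with biharmonic kernel representations. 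So the place where you say ``the bulk of the technical work lies'' is exactly right, but the mechanism you suggest (exponential decay inherited from the spectral gap) is the one that fails in the fourth-order setting.
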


\begin{thm}\label{thm4}
When $n\geq4$,  there exists an ancient solution $u(t, x)$ of equation with $t\leq0$,
which has the form $u(t, x)\approx \omega\big(\abs{x}-\rho(t)\big)+\phi(t, \abs{x})$ with
\begin{equation}\label{1.7}
\rho(t)=\sqrt[4]{-2(n-3)(n-1)^2t}+h(t).
\end{equation}
Moreover,  $h(t)$ is a $C^1$ function with the decay of order $O(\frac{1}{\log\abs{t}})$ as $t\rightarrow-\infty$  and $\lim\limits_{t\rightarrow -\infty}\phi(t, \abs{x})=0$ uniformly in $x\in \R^n$.
\qed
\end{thm}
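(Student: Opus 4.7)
The approach is an infinite-dimensional Lyapunov--Schmidt reduction around the radial layer $\omega_\rho(t,x):=\omega(|x|-\rho(t))$. The plan is to write $u=\omega_\rho+\phi$, impose the orthogonality $\int_{\mathbb{R}}\phi(t,\rho(t)+y)\,\omega'(y)\,\mathrm{d}y=0$, and split the resulting residual into (i) a scalar projection onto the kernel direction $\omega'(y)$, producing a reduced ODE for $\rho(t)$, and (ii) a component in the orthogonal complement, producing a fourth-order parabolic equation for $\phi$. The two problems will be coupled and closed jointly by a contraction argument.

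The error computation exploits $\omega''(y)=W'(\omega(y))$ and $\omega'''(y)=W''(\omega(y))\omega'(y)$ to collapse the chemical potential to the clean form
\[
\Delta\omega_\rho-W'(\omega_\rho)=\frac{n-1}{r}\omega'(y),\qquad y=r-\rho(t),\quad r=|x|,
\]
and consequently
\[
\bigl(-\Delta+W''(\omega_\rho)\bigr)\bigl[\Delta\omega_\rho-W'(\omega_\rho)\bigr]=\frac{(n-1)(n-3)}{r^{3}}\omega'(y)-\frac{(n-1)(n-3)}{r^{2}}W'(\omega(y)).
\]
A direct $L^{2}_{y}$-projection of this against $\omega'$, using $W(\omega)=(\omega')^{2}/2$, integration by parts in $y$, and the evenness of $(\omega')^{2}$, reveals that the two leading $\rho^{-3}$-contributions cancel. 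To recover the Willmore driving term one must therefore insert inner correctors $\rho(t)^{-k}\psi_{k}(y,r)$ iteratively, solving $L_{0}\psi_{k}=F_{k}$ with $L_{0}:=-\partial_{y}^{2}+W''(\omega)$, and absorbing the $\omega'$-component of each forcing $F_{k}$ into $\rho'(t)$. After sufficiently many rounds the solvability condition takes the form recorded in the excerpt,
\[
\rho'(t)+\frac{(n-3)(n-1)^{2}}{2\rho^{3}(t)}=Q(\rho,\rho'),\qquad |Q|\lesssim\frac{1}{|t|(\log|t|)^{2(p-1)}},
\]
whose principal term is precisely the radial Willmore flow \eqref{willmoreflowradial}. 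Setting $\rho=\gamma_{n}+h$ and using $\gamma_{n}'=-\tfrac{(n-3)(n-1)^{2}}{2\gamma_{n}^{3}}$ linearises this into a first-order ODE $h'(t)+a(t)\,h(t)=\widetilde Q(t)$ with $a(t)=\tfrac{3}{4t}$; after the substitution $\tau=-t$ the coefficient becomes stable, and variation of parameters with explicit integrating factor on $(-\infty,-T]$ yields a $C^{1}$ function $h$ with $|h(t)|\lesssim(\log|t|)^{-2(p-1)}\lesssim 1/\log|t|$ as $t\to -\infty$.

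It remains to construct the outer corrector. With $u_{\text{app}}:=\omega_\rho+\sum_{k=1}^{K}\rho^{-k}\psi_{k}$ and $\phi:=u-u_{\text{app}}$, the equation reads
\[
\partial_{t}\phi=\bigl(-\Delta+W''(u_{\text{app}})\bigr)\bigl(\Delta\phi-W''(u_{\text{app}})\phi\bigr)+\mathcal{N}(\phi)+E_{\text{app}},
\]
with $|E_{\text{app}}|\lesssim|t|^{-(K+1)/4}$ and $\mathcal{N}(\phi)$ at least quadratic. Projecting off $\omega'$, the leading linear part factorises asymptotically as $\partial_{t}+L_{0}^{2}$ plus lower-order drift terms from the interface motion; combining the $L^{2}_{y}$-spectral gap of $L_{0}$ on $(\omega')^{\perp}$ with standard fourth-order parabolic maximal regularity gives uniform invertibility in a weighted parabolic space carrying polynomial weight in $y$ and algebraic decay in $|t|$. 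A Banach contraction then delivers $\phi$ with $\phi(t,\cdot)\to 0$ uniformly as $t\to-\infty$, and a final fixed point in $h$, reinserting the $\phi$-dependence of the solvability projection, closes the system and establishes Theorem \ref{thm4}. The main obstacle is exactly this last linear inversion: the bilaplacian structure, the moving radial interface, the $\mathbb{R}^{n}$ behaviour at spatial infinity with $n\geq 4$, and the demand for resolvent bounds uniform in the ancient regime $t\to-\infty$ together force a delicate choice of weighted space, and they form the technical heart of the construction.
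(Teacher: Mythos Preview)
Your overall architecture---Lyapunov--Schmidt reduction, reduced ODE for $\rho$, fixed point for $\phi$ in a weighted space, then closing in $h$---is exactly the paper's. Two concrete points where your execution diverges are worth flagging.

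First, your orthogonality condition $\int\phi\,\omega'\,\mathrm{d}y=0$ is unweighted, and this is why you see the leading $\rho^{-3}$ projection cancel and are forced into an iterative corrector scheme to resurrect the Willmore coefficient. The paper instead imposes $\int_0^\infty\phi\,\omega'(r-\rho)\,r^{n-1}\,\mathrm{d}r=0$; with the $r^{n-1}$ weight the projection of $-\frac{(n-1)(n-3)}{r^2}\omega''+\frac{(n-1)(n-3)}{r^3}\omega'$ directly produces $\frac{(n-1)^2(n-3)}{2\rho^3}$ at leading order, and only a \emph{single} explicit corrector $\frac{(n-1)(n-3)}{r^2}\widetilde\omega(r-\rho)$ is needed, chosen so that $\bigl(-\partial_y^2+W''(\omega)\bigr)^2\widetilde\omega=-\omega''$ (a fourth-order, not second-order, correction). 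This avoids your iteration entirely and is both cleaner and what makes the error bound in Lemma~\ref{lem10} come out with the right power.

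Second, your appeal to ``standard fourth-order parabolic maximal regularity'' plus the $L^2$ spectral gap of $L_0$ is where the real gap lies. There is no maximum principle, the interface is moving, and one needs \emph{pointwise} weighted bounds uniform as $t\to-\infty$, not $L^p$ maximal regularity. The paper builds this from scratch: explicit decay of the biharmonic heat kernel (Proposition~\ref{prop6}), a representation formula for the linearised flow, and a blow-up/contradiction argument (Lemma~\ref{lem5}) in the polynomial-weight space $C_\Phi$ to get the a~priori estimate on the orthogonal complement. This is the technical core, and your sketch does not indicate how you would obtain such uniform-in-$t$ pointwise control without it.
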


\begin{rem}
In this paper,  we mainly prove that the results of Theorem \ref{thm4} hold.
For Theorem \ref{thm1},  its proof is similar and we just now notice that $t>0$ in this case $n=2$.
When $n=1$ and $3$,  it is easily to check that problem \eqref{eq1.1} has a radical solution of the form
 \begin{equation*}
   u(t, \abs{x})=\omega(\abs{x}-c),
 \end{equation*}
 with any constant $c\in\R$,  where $\omega(y)$ is the solution of \eqref{eqq}.
\qed

\end{rem}

\begin{rem}
We prove Theorem \ref{thm4} by using Lyapunov-Schmidt reduction method.
This method has been widely applied to solving the existence problem  of solutions to various kinds of equations \cite{cdm, CDD, dd1, dd2, dG2}.
Comparing to the previous works,  we are facing
three difficulties during the process of dealing with problem \eqref{eq1.1} as the following:

\noindent {\textbf{(1)}}.
Firstly,  the main term $\omega\big(\abs{x}-\rho(t)\big)$ in \eqref{eq1.13}
is not a good approximate solution since it does not satisfy the boundary conditions in \eqref{eq2.2} and does not provide enough decay of the error.
To modify it,  we introduce a cut-off function around the origin and a correction function,
see \eqref{dz} in Section \ref{sec:a}.
This main idea comes from \cite{r}.

\noindent {\textbf{(2)}}.
Secondly,  equation \eqref{eq1.1} is a parabolic equation of fourth order,  which does not satisfy the Maximum Principle since the heat kernel of the biharmonic parabolic operator is sign-changing,  see \cite{FG}.
To overcome it,  we employ the blow-up technique together with the representation of parabolic kernel in Section \ref{sec:lp}.

\noindent {\textbf{(3)}}.
Lastly, the correlative heat kernel of the linearization operator of the equation \eqref{eq1.1} is more intricate,  which leads to hard task to get suitable a priori estimates of
linearized problem of equation \eqref{eq1.1}.
To solve it,  we modify the estimate of the error term $E(t, r)$ in \eqref{Error} with an polynomial decay in Section \ref{sec:a},  and perform more delicate calculations in Section \ref{sec:lp}.
\qed
\end{rem}

The paper is organized as follows.

$\clubsuit$ In the first part of Section \ref{sec:a},  we deduce some estimates of decay for $\omega$ in (\ref{eqq}) and its derivatives.
After that,  an approximate solution,  say $z(t,\abs{x})$ with a parameter $\rho(t)$ in \eqref{dz} and \eqref{drho1},   will be defined.
By the perturbation of $z(t,\abs{x})+\phi(t,\abs{x})$,  the setting-up of a projected form of \eqref{eq1.1} will be derived,
see (\ref{eq2.10})-(\ref{eq2.11}) with the Lagrange multiplier $c(t)$.
With the introduction of a suitable norm,  the estimates of the error will be provided in the last part of
Section \ref{sec:a}.

$\clubsuit$ Section \ref{sec:lp} is devoting to the collection of some results of linear parabolic equations with a biharmonic operator and then obtain the solvability of a linear projected problem in (\ref{eq3.1}).

$\clubsuit$
In Section \ref{sec:nl},  we solve the nonlinear problem (\ref{eq2.10})-(\ref{eq2.11}) by an argument of  the fixed-point theorem.

$\clubsuit$
In Section \ref{sec:tc}, in order to obtain a radial solution to \eqref{eq1.1}
we choose a suitable parameter $h(t)$ (in other words,  adjusting the parameter $\rho$ given by \eqref{drho1})
such that  $c(t)$ equals to zero,  in problem (\ref{eq2.10})-\eqref{eq2.11}.

\section{The setting-up: ansatz, the nonlinear projected problem for perturbation term} \label{sec:a}

\subsection{Some estimates of the basic layer}
Before proving the main theorems,  we first derive the decay estimates of the basic layer $\omega$,  which is the solution to equation (\ref{eqq}).
\begin{lem}\label{lem1}Let $\omega$ be the solution of problem \eqref{eqq},  then we have
\begin{equation}\label{eq2.0}
\begin{aligned}
 \left\{
\begin{array}{l}
 \lim\limits_{y\rightarrow+\infty}\frac{\omega(y)-1+\beta e^{-\sqrt{W''(1)}y}}{\beta^2e^{-2\sqrt{W''(1)}y}}=\frac{W^{(3)}(1)}{6W''(1)};
\\[3mm]
\lim\limits_{y\rightarrow-\infty}\frac{\omega(y)+1-\beta e^{\sqrt{W''(-1)}y}}{\beta^2e^{2\sqrt{W''(-1)}y}}=-\frac{W^{(3)}(-1)}{6W''(-1)};
\\[3mm]
\lim\limits_{y\rightarrow+\infty}\frac{\omega'(y)}{e^{-\sqrt{W''(1)}y}}=\beta\sqrt{W''(1)}
\quad \text{and}\quad
\lim\limits_{y\rightarrow-\infty}\frac{\omega'(y)}{e^{\sqrt{W''(-1)}y}}=\beta\sqrt{W''(-1)},
  \end{array}
\right.
\end{aligned}
\end{equation}
where $W''(s)$ and $W^{(3)}(s)$ denote the derivatives of second and third orders of the function $W(s)$ respectively,
and
\begin{equation}\label{beta}
\beta:=\exp\Bigg\{ \sqrt{W''(1)}\int^1_0
\Bigg[\frac{1}{\sqrt{2\big(W(s)-W(1)\big)}}-\frac{1}{\sqrt{W''(1)}\,(1-s)}\Bigg]\mathrm{d}s\Bigg\}.
\end{equation}
\end{lem}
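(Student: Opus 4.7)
The plan is to reduce everything to one-dimensional calculus via the Hamiltonian first integral associated with \eqref{eqq}. Multiplying $\omega''=W'(\omega)$ by $\omega'$, integrating, and using $\omega(\pm\infty)=\pm 1$ together with $\omega'(\pm\infty)=0$, one obtains
\[
[\omega'(y)]^2 = 2\bigl(W(\omega(y))-W(1)\bigr),\qquad y\in\R.
\]
Since $\omega'>0$, this identifies the inverse function $\lambda=\omega^{-1}$ with the explicit integral \eqref{eqq2}, reducing all four asymptotic statements to the single question of expanding $\lambda(s)$ as $s\to 1^-$ and inverting.

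For that expansion, I would Taylor expand $W$ at $s=1$ using $W'(1)=0$:
\[
W(\tau)-W(1)=\tfrac12 W''(1)(1-\tau)^2-\tfrac16 W^{(3)}(1)(1-\tau)^3+O((1-\tau)^4),
\]
so the integrand in \eqref{eqq2} admits the explicit singular-plus-regular decomposition
\[
\frac{1}{\sqrt{2(W(\tau)-W(1))}}=\frac{1}{\sqrt{W''(1)}\,(1-\tau)}+\frac{W^{(3)}(1)}{6\,(W''(1))^{3/2}}+O(1-\tau).
\]
Integrating and invoking \eqref{beta}, which by construction identifies $\int_0^1$ of the bracketed remainder in \eqref{beta} with $\ln\beta/\sqrt{W''(1)}$, this yields
\[
\lambda(s)=-\frac{\ln(1-s)}{\sqrt{W''(1)}}+\frac{\ln\beta}{\sqrt{W''(1)}}-\frac{W^{(3)}(1)}{6\,(W''(1))^{3/2}}(1-s)+O\bigl((1-s)^2\bigr).
\]

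I would then invert this asymptotic. Setting $u:=1-\omega(y)$ and exponentiating the relation $\sqrt{W''(1)}\,y=\sqrt{W''(1)}\,\lambda(\omega(y))$ produces the implicit identity
\[
u=\beta e^{-\sqrt{W''(1)}\,y}\exp\!\Bigl(-\tfrac{W^{(3)}(1)}{6W''(1)}\,u+O(u^{2})\Bigr).
\]
A single bootstrap substitution — insert the leading-order $u\sim\beta e^{-\sqrt{W''(1)}y}$ into the exponential and linearise — gives the two-term expansion $u=\beta e^{-\sqrt{W''(1)}y}-\frac{W^{(3)}(1)}{6W''(1)}\beta^{2}e^{-2\sqrt{W''(1)}y}+O(e^{-3\sqrt{W''(1)}y})$, which rearranges precisely to the first limit in \eqref{eq2.0}. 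The asymptotic of $\omega'$ then follows by plugging this into the first integral via the factorisation $\sqrt{2(W(\omega)-W(1))}=\sqrt{W''(1)}\,(1-\omega)\bigl[1+O(1-\omega)\bigr]$, which is already available from the Taylor expansion above.

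The two statements at $y\to-\infty$ I would get at no additional cost from the symmetry $W(-s)=W(s)$: by uniqueness for \eqref{eqq}, $\tilde\omega(y):=-\omega(-y)$ solves the same problem, so $\omega(-y)=-\omega(y)$, and the $+\infty$ estimates transfer to $-\infty$ after using $W''(-1)=W''(1)$ and $W^{(3)}(-1)=-W^{(3)}(1)$. The only genuinely delicate step is the inversion: one must propagate the first-order correction through both the Taylor expansion of $\lambda$ and the linearisation of the exponential simultaneously, so that the two occurrences of $W^{(3)}(1)/6W''(1)$ combine to the single coefficient recorded in \eqref{eq2.0}. All subsequent steps — integration, rearrangement, plugging into $\omega'=\sqrt{2(W(\omega)-W(1))}$ — are routine bookkeeping once the expansion of $\lambda(s)$ has been nailed down to order $(1-s)^{2}$.
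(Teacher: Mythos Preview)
Your proposal is correct and follows essentially the same route as the paper: both arguments work through the inverse function $\lambda$ from \eqref{eqq2}, Taylor expand the integrand near $\tau=1$ to extract the $\ln(1-s)$ singularity plus the $(1-s)$ correction with coefficient $W^{(3)}(1)/6(W''(1))^{3/2}$, and then invert. The only cosmetic differences are that the paper phrases the first step as a L'Hospital computation on $\lambda$ and obtains the $\omega'$ asymptotics via L'Hospital on $\omega''=W'(\omega)$, whereas you read $\omega'$ straight off the first integral; and you invoke the oddness of $\omega$ (from $W$ even) to transfer the $+\infty$ statements to $-\infty$, while the paper redoes the computation at $s\to(-1)^{+}$ directly.
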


\begin{proof}
Since the function $W$ satisfies the conditions in $(\ref{eq1.4})$,
especially  $W''(1)=W'(-1)>0$,
the above limits in $(\ref{eq2.0})$ are well-defined and $\beta$ is finite.
Recall that the inverse function of $\omega$ is the function $\lambda(s)$ given by (\ref{eqq2}). Using L'Hospital's rule,
Taylor's formula,  the function $W\in C^{3, 1}_{\text{loc}}(\R)$ satisfies (\ref{eq1.4}) and (\ref{eqq2}),  we find that
  $$\lim_{s\rightarrow 1^-}\frac{1}{1-s}
  \Bigg\{ \lambda(s)+\frac{\ln(1-s)}{\sqrt{W''(1)}}-\int^1_0\frac{1}
  {\sqrt{2(W(s)-W(1))}}-\frac{1}{\sqrt{W''(1)}(1-s)}\mathrm{d}s\Bigg\}
  =-\frac{W^{(3)}(1)}{6(W''(1))^{3/2}}
  $$
  and
  $$ \lim_{s\rightarrow (-1)^+}\frac{1}{1+s}
  \Bigg\{ \lambda(s)- \frac{\ln(1+s)}{\sqrt{W''(-1)}}-\int^0_{-1}\frac{1}
  {\sqrt{2(W(s)-W(1))}}-\frac{1}{\sqrt{W''(1)}(1+s)}\mathrm{d}s\Bigg\}
  =\frac{W^{(3)}(-1)}{6(W''(-1))^{3/2}}.
  $$
According to the fact that if $x=\lambda(s)$,  then $s=\omega(y)$,  using Taylor's formula again and the evenness of $W$ in $(-1, 1)$,  we easily deduce that
the first two limits in (\ref{eq2.1}) hold.
So we have
$$ \lim\limits_{y\rightarrow+\infty}\frac{1-\omega(y)}{e^{-\sqrt{W''(1)}y}}=\beta\ \ \text{and}\ \ \lim\limits_{y\rightarrow-\infty}\frac{1+\omega(y)}{e^{\sqrt{-W''(1)}y}}=\beta.$$
Using L'Hospital's rule again,  the condition (\ref{eq1.4}) and the equation (\ref{eqq}),  we have
\begin{equation*}
\begin{aligned}
\lim\limits_{y\rightarrow+\infty}\frac{\omega'(y)}{e^{-\sqrt{W''(1)}y}}
=
\lim\limits_{y\rightarrow+\infty}\Bigg\{\frac{\omega(y)-1}{-\sqrt{W''(1)}e^{-\sqrt{W''(1)}y}}
\times\frac{W'(\omega)-W'(1)}{\omega(y)-1}\Bigg\}
=\beta\sqrt{W''(1)}
\end{aligned}
\end{equation*}
and
$$\lim\limits_{y\rightarrow-\infty}\frac{\omega'(y)}{e^{\sqrt{W''(-1)}y}}
=
\lim\limits_{y\rightarrow-\infty}\Bigg\{\frac{\omega(y)+1}{\sqrt{W''(-1)}e^{\sqrt{W''(-1)}y}}
\times\frac{W'(\omega)-W'(-1)}{\omega(y)+1}\Bigg\}
=\beta\sqrt{W''(-1)}.$$
\end{proof}

Next we consider the kernel of a fourth  order linear operator. The main result is stated as the following.

\begin{lem} Let $W(s)$ be a smooth function satisfying the conditions in \eqref{eq1.4}.
Then we have that any solution of the homogeneous problem
\begin{equation*}
 \big[\,\partial_{xx}-W'(\omega(x))\,\big]^2\varphi=0,  \quad \abs{\varphi}\leq 1,  \quad \abs{\varphi_{xx}}\leq 1 \qquad \text{in}\ \R,
\end{equation*}
has the form
\begin{equation*}
  \varphi(x)=c\omega'(x),
\end{equation*}
with some constant $c\in\R$.
\end{lem}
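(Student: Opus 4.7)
The plan is to exploit the factorization of the operator into two copies of the linearized Allen-Cahn operator. Set $L:=\partial_{xx}-W''(\omega(x))$; the statement of the lemma appears to contain a typographical slip (it reads $W'$ rather than $W''$), since $W''(\omega)$ is the coefficient that naturally arises from linearizing the profile equation $\omega''=W'(\omega)$ and is the only choice under which $\omega'\in\ker L$, whence $\omega'\in\ker L^{2}$. I proceed with $L^2\varphi=0$ under the bounds $|\varphi|,|\varphi_{xx}|\leq 1$, and aim to reduce the fourth-order problem to two successive applications of the second-order operator $L$.

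\emph{Step 1 (bounded kernel of $L$).} Differentiating the profile equation gives $L\omega'=0$, and by Lemma \ref{lem1} the function $\omega'$ decays exponentially at $\pm\infty$. A second linearly independent solution is produced by reduction of order,
\[
\psi_2(x):=\omega'(x)\int_0^{x}\frac{ds}{\bigl(\omega'(s)\bigr)^2}.
\]
The exponential decay from Lemma \ref{lem1} forces $\psi_2(x)$ to grow like $e^{\sqrt{W''(1)}\,|x|}$ at both ends. Hence any bounded element of $\ker L$ is a scalar multiple of $\omega'$.

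\emph{Step 2 (reduction).} Set $\psi:=L\varphi=\varphi_{xx}-W''(\omega)\varphi$. Since $W''(\omega)$ is uniformly bounded on $\R$ and $|\varphi|,|\varphi_{xx}|\leq 1$, the function $\psi$ is bounded. The hypothesis $L^2\varphi=0$ reads $L\psi=0$, so Step 1 yields $\psi=\alpha\omega'$ for some $\alpha\in\R$.

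\emph{Step 3 (orthogonality kills $\alpha$).} Landau's inequality applied to $|\varphi|,|\varphi_{xx}|\leq 1$ gives $|\varphi_x|\leq\sqrt{2}$. Multiplying $L\varphi=\alpha\omega'$ by $\omega'$, integrating on $[-N,N]$ and integrating by parts twice, I obtain
\[
\alpha\int_{-N}^{N}\bigl(\omega'(x)\bigr)^2\,dx=\int_{-N}^{N}\varphi\cdot L\omega'\,dx+\bigl[\varphi_x\omega'-\varphi\omega''\bigr]_{-N}^{N}.
\]
The first term on the right vanishes because $L\omega'=0$. The identity $\omega''=W'(\omega)$ together with Lemma \ref{lem1} yields exponential decay of $\omega''$ at $\pm\infty$, so the boundary bracket tends to $0$ as $N\to\infty$ thanks to boundedness of $\varphi$ and $\varphi_x$. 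Using $\int_{\R}(\omega')^2\,dx>0$, passing to the limit forces $\alpha=0$, that is, $L\varphi=0$. Applying Step 1 now to $\varphi$ itself gives $\varphi=c\omega'$ for some $c\in\R$, as claimed.

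I expect the main obstacle to be Step 1, the classification of the bounded kernel of $L$, since it hinges on the sharp exponential decay of $\omega'$ together with the exponential growth of the second fundamental solution $\psi_2$; this is where Lemma \ref{lem1} is used essentially. Once that classification is in hand, the orthogonality step is routine bookkeeping.
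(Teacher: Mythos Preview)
Your proof is correct, and you rightly flag the $W'$ versus $W''$ typo. The factorization into $\psi:=L\varphi$ followed by the orthogonality trick to kill the coefficient $\alpha$ is clean; the boundary terms vanish exactly as you say because $\omega'$ and $\omega''=W'(\omega)$ decay exponentially while $\varphi,\varphi_x$ stay bounded via Landau.

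The paper does not actually write out a proof of this lemma: it simply points to Lemma~\ref{lem2}, an $H^2$ coercivity inequality $\int_{\R}|L\phi|^2\geq c\int_{\R}|\phi|^2$ on the orthogonal complement of $\omega'$. The proof of Lemma~\ref{lem2} contains essentially your Step~1 (the substitution $\phi=\zeta\omega'$ and reduction of order producing the growing solution $\omega'\int(\omega')^{-2}$), but in the $L^2$ framework. What the paper leaves implicit is exactly your Steps~2--3: how to pass from the fourth-order equation for merely bounded $\varphi$ down to $L\varphi=0$. Your pointwise ODE argument with the integration-by-parts on $[-N,N]$ handles this directly without needing $\varphi\in L^2(\R)$, which the hypotheses do not guarantee. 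In that sense your write-up is more self-contained than the paper's reference for this particular statement, while the paper's Lemma~\ref{lem2} buys an honest spectral gap that is used later in the parabolic energy estimate (Step~5 of the proof of the Assertion).
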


For the proof of this lemma,  see Lemma \ref{lem2} which is a more general result.
\qed

\subsection{The setting-up of the problem}\label{section2.2}
We will prove that Theorem \ref{thm4} holds.
Thus,  we always assume that $n\geq 4$ and $t<0$ in the rest of the present paper.

Let $\widetilde{u}(t, \abs{x})$ be a solution of \eqref{eq1.1},  by a translation $u(t, \abs{x})=\widetilde{u}(t-T, \abs{x})$ with some abuse of notation,   then we have that $u(t, r)$ satisfies  the following problem
\begin{equation}\label{eq2.2}
  \left\{
\begin{array}{l}
 u_t=-u_{rrrr}-\frac{2(n-1)}{r}u_{rrr}+\Big(2W''(u)-\frac{(n-1)(n-3)}{r^2}\Big)u_{rr}
 +\Big(\frac{2(n-1)W''(u)}{r}+\frac{(n-1)(n-3)}{r^3}\Big)u_{r}
\\[3mm]
\hspace{0.8cm}+W'''(u)u^2_r-W'(u)W''(u),
 \qquad \ \ \forall\, (t, r)\in (-\infty, -T]\times(0, +\infty),
\\[3mm]
  u_{rrr}(t, 0)=u_r(t, 0)=0,\hspace{2.35cm}\text{for all}\ t\in (-\infty, -T],
  \end{array}
\right.
\end{equation}
where $r=\abs{x}$ and $T$ is a large positive number whose value can be adjusted at different steps.
For convenience,  we denote the right hand side of the first equation
in (\ref{eq2.2}) by $F(u)$,  that is
\begin{equation}\label{defF}
  \begin{aligned}
F(u):=&-u_{rrrr}-\frac{2(n-1)}{r}u_{rrr}+\Big(2W''(u)-\frac{(n-1)(n-3)}{r^2}\Big)u_{rr}
\\[2mm]
&+\Big(\frac{2(n-1)W''(u)}{r}+\frac{(n-1)(n-3)}{r^3}\Big)u_{r}+W'''(u)u^2_r-W'(u)W''(u).
\end{aligned}
\end{equation}

Our purpose is to find a solution of \eqref{eq2.2} with the property
\begin{equation*}
  u(t, r)\approx \omega\big(r-\rho(t)\big),  
\end{equation*}
where $\omega(y)$ is the solution of problem \eqref{eqq}.
Firstly,  we notice that $\omega\big(r-\rho(t)\big)$ does not satisfies the boundary conditions in \eqref{eq2.2}.
 A smooth cut-off function $\chi(r)$ can be defined in the form
\begin{equation}\label{dcut-off}
  \chi(r)=0, \qquad \text{for}\ r\leq\frac{\delta_0}{2}\qquad \text{and}\qquad \chi(r)=1, \qquad \text{for}\ r\geq\delta_0,
\end{equation}
for some  small fixed positive number $\delta_0$.
We define the first approximate solution of (\ref{eq2.2}) as the following
\begin{equation}\label{eq2.3}
  \widehat{\omega}(t, r)=\omega\big(r-\rho(t)\big)\chi(r)+\chi(r)-1.
\end{equation}
Here we assume that the function $\rho(t)$ has the from
\begin{equation}
   \rho(t)= \gamma_n(t)+h(t),
\label{drho1}
\end{equation}
where the function $h(t)=O((\log|t|)^{-1})$ as $t\rightarrow-\infty$
and the function $\gamma_n(t)$ is defined in \eqref{willmoreflowradial}-\eqref{sphere eq}, i.e.
\begin{equation}\label{dgamma-n}
 \gamma_n(t):=\sqrt[4]{-2(n-1)^2(n-3)t}, \qquad t<0,
\end{equation}
which is a radial solution of Willmore flow equation \eqref{willmore} with $n\geq4$.
More precisely,  we assume that $h(t)$ satisfies the following constraint
\begin{equation}\label{assumeh}
  \sup_{t\leq-1}\abs{h(t)}+\sup_{t\leq-1}\left\{\frac{\abs{t}}{\log\abs{t}}\abs{h'(t)}\right\}\leq 1.
\end{equation}

Secondly,  by using \eqref{eqq},  we have that
\begin{equation}
\begin{aligned}\label{eq2.4}
  -\partial_t\widehat{\omega}(t, r)+F(\widehat{\omega}(t, r))&=\rho'(t)\omega'\big(r-\rho(t)\big)-\frac{(n-3)(n-1)}{r^2}\omega''\big(r-\rho(t)\big)
  \\[2mm]
&\quad+\frac{(n-1)(n-3)}{r^3}\omega'\big(r-\rho(t)\big), \qquad \text{for}\ r>\delta_0,
\end{aligned}
\end{equation}
where the operator $F(u)$ is defined by \eqref{defF}.
By \eqref{dgamma-n}, we find that the second term in the right hand side of equality \eqref{eq2.4},
that is
\begin{equation*}
  -\frac{(n-1)(n-3)}{r^2}\omega''\big(r-\rho(t)\big),
\end{equation*}
has a slow decay of order $O\left(\abs{t}^{-\frac{1}{2}}\right)$ as $t$ goes to negative infinity.
However,  it is not enough to solve equation
\eqref{eq2.4} since this term is much bigger than other terms in \eqref{eq2.4}.
To cancel it and improve the approximate solution,  inspired by \cite{r},  we define a correction function
\begin{equation}\label{w1}
  \widetilde{\omega}(y)
:=-\omega'(y)\int_0^y \Bigg[
(\omega'(\hat{y}))^{-2}\int_{-\infty}^{\hat{y}}\frac{s(\omega'(s))^2}{2}\mathrm{d}s
\Bigg]{\mathrm d}\hat{y},
\end{equation}
Then we have that
\begin{equation}\label{eq2.1}\begin{aligned}
  L^*(\widetilde{\omega}):=\big[-\partial_{yy}+W''(\omega(y))\big]\widetilde{\omega}(y)=\frac{1}{2}y\omega'(y),
\qquad
(L^*)^2\big[\widetilde{\omega}(y)\big]=-\omega''(y), \qquad \forall\,  y\in\R,
\end{aligned}\end{equation}
and $\widetilde{\omega}(y)$ is an odd function with exponential decay such that
\begin{equation}\label{dwtd}
  \int_{\R}\omega'(y)\widetilde{\omega}(y){\mathrm d}y=0\quad
  \text{and}\quad
  \abs{\widetilde{\omega}(y)}\leq Ce^{-\frac{3\alpha}{4}\abs{y}},\qquad \text{for}\ y\in\R.
\end{equation}
At last,  we define an approximate solution of problem \eqref{eq2.2} as the following
\begin{equation}\label{dz}
  z(t, r):=\omega\big(r-\rho(t)\big)\chi(r)+\chi(r)-1+\frac{(n-1)(n-3)}{r^2}  \widetilde{\omega}\big(r-\rho(t)\big)\chi(r),
\end{equation}
where  the cut-off function $\chi(r)$ and the function $\rho(t)$ are given
by \eqref{dcut-off} and \eqref{drho1}.

We will look for a solution of equation (\ref{eq2.2}) of the form
\begin{equation}\label{eq2.30000}
  u(t, r)=z(t, r)+\phi(t, r),
\end{equation}
where $\phi$ is a small perturbation term.
This can be done by using the Lyapunov-Schmidt reduction method in two steps.

\noindent {\bf (1).}
The first step (see Sections \ref{sec:lp}-\ref{sec:nl}) is solving the following projected version of problem (\ref{eq2.2}) in terms of $\phi(t, r)$:
\begin{equation}\label{eq2.10}
\begin{aligned}
\phi_t=L[\phi]+E(t, r)+N(\phi)-c(t)\partial_r\widehat{\omega}(t, r)\qquad \text{in}\ (-\infty, -T]\times (0, \infty),
\end{aligned}
\end{equation}
and
\begin{equation}\label{eq2.11}
 \int_{0}^\infty \phi(t, r)\omega'\big(r-\rho(t)\big) r^{n-1}{\mathrm d}r=0, \qquad  \text{for all}\ t<-T,
\end{equation}
where the function $\widehat{\omega}(t, r)$ is defined by \eqref{eq2.3},  the error term $E(t, r)$ and nonlinear term $N(\phi)$ are defined respectively by
\begin{equation}\label{Error}
E(t, r):=F\big(z(t, r)\big)-\frac{\partial z(t, r)}{\partial t}
\end{equation}
and
\begin{equation}\label{nonlinearterm}
N(\phi):=F\big(z(t, r)+\phi(t, r)\big)-F\big(z(t, r)\big)-F'\big(z(t, r)\big)[\phi].
\end{equation}
In the above, $F(u)$ is defined by \eqref{defF} and the linear operator $L[\phi]:=F'\big(z(t, r)\big)[\phi]$ is defined as follows
\begin{equation}\label{eq2.13}
\begin{aligned}
F'\big(z(t, r)\big)[\phi]:=&-\phi_{rrrr}-\frac{2(n-1)}{r}\phi_{rrr}
+\Bigg[2W''\big(z(t, r)\big) -\frac{(n-1)(n-3)}{r^2}\Bigg]\phi_{rr}
\\[2mm]
&-\big(W''\big(z(t, r)\big)\big)^2\phi
+\Bigg[\frac{2(n-1)W''\big(z(t, r)\big)}{r}-\frac{(3-n)(n-1)}{r^3}\Bigg]\phi_{r}
\\[2mm]
&+2W'''\big(z(t, r)\big)\phi_rz_r-W'''\big(z(t, r)\big)W'\big(z(t, r)\big)\phi+W^{(4)}\big(z(t, r)\big)\abs{z_r}^2\phi
\\[2mm]
&+2W'''\big(z(t, r)\big)z_{rr}\phi+2\frac{n-1}{r}W'''\big(z(t, r)\big)z_r\phi.
\end{aligned}
\end{equation}

\noindent {\bf (2).}
The second step is to choose the function $c(t)$ in such a way that $\phi$ satisfies the orthogonality condition (\ref{eq2.11}),  namely the following equality holds:
\begin{align}
  &c(t)\int_{0}^\infty\partial_r\widehat{\omega}(t, r)\omega'\big(r-\rho(t)\big)r^{n-1}{\mathrm d}r
\nonumber\\[2mm]
&=\int_{0}^\infty \Big[\omega'''\big(r-\rho(t)\big)+\frac{n-1}{r}\omega''\big(r-\rho(t)\big)
  -W''\big(z(t, r)\big)\omega'\big(r-\rho(t)\big)\Big]
\nonumber\\[2mm]
&\qquad\qquad\times\left(-\phi_{rr}-\frac{n-1}{r}\phi_r+W''\big(z(t, r)\big)\phi\right)
r^{n-1}{\mathrm d}r
\nonumber\\[2mm]
&\quad +\int_{0}^\infty\left[\partial_{rr}z(t, r)
  +\frac{n-1}{r}\partial_{r}z(t, r)-W'\big(z(t, r)\big)\right]\phi \omega'\big(r-\rho(t)\big)r^{n-1}{\mathrm d}r
\nonumber\\[2mm]
&\quad +\int_{0}^\infty \phi(t, r)\partial_t\big[\omega'\big(r-\rho(t)\big)\big]r^{n-1}{\mathrm d}r
\ +\
\int_{0}^\infty\big(E(t, r)+N(\phi)\big)\omega'\big(r-\rho(t)\big)r^{n-1}{\mathrm d}r,
\label{eq2.14}
\end{align}
for all $t<-T$. Later on, in Section \ref{sec:tc}, we will choose $h(t)$ such that $c(t)=0$.
This means that the function $u$ in \eqref{eq2.30000} will exactly solve \eqref{eq2.2}.

\subsection {Estimates of the error terms}
 We will establish some estimates for the error term $E(t, r)$ in \eqref{Error}.
By Taylor's formula,  the definitions in \eqref{defF} and \eqref{dz},  we have that
\begin{align*}
E(t, r)=&F\big(z(t, r)\big)-\frac{\partial z(t, r)}{\partial t}
  \\[2mm]
=&F\big(\widehat{\omega}(t, r)+\widetilde{z}(t, r)\big)
-\frac{\partial\big[ \widehat{\omega}(t, r)+\widetilde{z}(t, r)\big]}{\partial t}
   \\[2mm]
=&F\big(\widehat{\omega}(t, r)\big)
+F'\big(\widehat{\omega}(t, r)\big)\big[\widetilde{z}(t, r)\big]
+F''\big(\widehat{\omega}(t, r)+\theta\widetilde{z}(t, r)\big)\big[\widetilde{z}(t, r), \widetilde{z}(t, r)\big]
   \\[2mm]
&-\frac{\partial \widetilde{z}(t, r)}{\partial t}-\frac{\partial \widehat{\omega}(t, r)}{\partial t}
   \\[2mm]
    :=&E_1(t, r)+E_2(t, r),
\end{align*}
where $\theta\in(0, 1)$.
In the above,  the operators $F(u)$ and $F'(u)[v]$ are given by \eqref{defF} and \eqref{eq2.13} respectively,  and
 the function $\widehat{\omega}(t, r)$ is given by \eqref{eq2.3} and $\widetilde{z}(t, r)$ is defined by
 \begin{equation}\label{dzt}\begin{aligned}
   \widetilde{z}(t, r):=\frac{(n-1)(n-3)}{r^2}  \widetilde{\omega}\big(r-\rho(t)\big)\chi(r),
 \end{aligned}\end{equation}
with $\rho(t)$ and $\chi(r)$ given by \eqref{drho1} and \eqref{dcut-off} respectively.
The operator $F''(u)[v_1, v_2]$ is defined as the following
\begin{equation}\label{dFs}\begin{aligned}
 F''(u)[v_1, v_2]:=&\Delta\big[W'''(u)v_1v_2\big]
\ -\ \left\{W'''(u)W''(u) \ +\ W^{(4)}(u)\big[-\Delta u+W'(u)\big]\right\}v_1v_2
 \\[2mm]
 &\ +\ W'''(u)\Big\{\big[\Delta v_1-W''(u)v_1\big]v_2 \ +\ \big[\Delta v_2-W''(u)v_2\big]v_1\Big\}.
\end{aligned}
\end{equation}
 The terms $E_1(t, r)$ and $E_2(t, r)$  have the following explicit forms
 \begin{equation}\label{de1}
   E_1(t, r):=F\big(\widehat{\omega}(t, r)\big)-\frac{\partial \widehat{\omega}(t, r)}{\partial t}+\frac{(n-1)(n-3)}{r^2}\partial_{rr}\widehat{\omega}(t, r)+\frac{(n-1)(n-3)^2}{2r^3}\partial_r\widehat{\omega}(t, r),
 \end{equation}
 and
\begin{align}
  E_2(t, r):=&-\left[\frac{(n-1)(n-3)}{r^2}\partial_{rr}\widehat{\omega}(t, r)+\frac{(n-1)(n-3)^2}{2r^3}\partial_r\widehat{\omega}(t, r)\right]
  +F'\big(\widehat{\omega}(t, r)\big)\big[\widetilde{z}(t, r)\big]
\nonumber \\[2mm]
& +F''\big(\widehat{\omega}(t, r)+\theta\widetilde{z}(t, r)\big)\big[\widetilde{z}(t, r), \widetilde{z}(t, r)\big]
   -\frac{\partial \widetilde{z}(t, r)}{\partial t}.
\label{de2}\end{align}
The main result is given by the following lemma.
\begin{lem}\label{lem10}
Let $\alpha:=\sqrt{W''(1)}$,  $p\in(n, n+1]$ and $T>1$,  we set
\begin{equation}\label{dpsi}
  \Phi(t, r):=\left\{
\begin{array}{l}
\frac{\log\abs{t}}{\abs{t}^{\frac{1}{2}}}\frac{1}{\big(1+\abs{r-\gamma_n(t)-\frac{1}{4\alpha}\log\abs{t}}\big)^{p}}, \qquad \text{if} \ r\in\left[\delta_0, +\infty\right);
\\[4mm]
 \frac{\log\abs{t}}{\abs{t}^{\frac{1}{2}}}\overline{\chi}_{\left\{\frac{\delta_0}{2}\leq r<\delta_0\right\}},
 \hspace{2.33cm}\text{if}\ r\in\left(0, \delta_0\right);
  \end{array}
\right.
\end{equation}
where the function $\gamma_n(t)$ is defined by \eqref{dgamma-n}
and $\delta_0$ is a small positive number given in \eqref{dcut-off}.
Here $\overline{\chi}_{A}$ is the characteristic function of the set $A$.
Then there exists constant $C>0$ which depends only on $\delta_0$,  $\alpha$,  and $n$,  such that
$$|E(t, r)|\leq C\frac{\Phi(t, r)}{\log\abs{t}}, $$
for all $(t, r)\in (-\infty, -T]\times(0, +\infty)$.
\end{lem}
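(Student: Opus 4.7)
The strategy is to decompose $E=E_1+E_2$ via \eqref{de1}--\eqref{de2} and estimate each piece on three spatial regions: the inner zone $(0,\delta_0/2)$, the cut-off transition $(\delta_0/2,\delta_0)$, and the outer zone $(\delta_0,+\infty)$. On $(0,\delta_0/2)$ we have $\chi\equiv 0$, so $z\equiv -1$, $\partial_t z\equiv 0$, and $F(-1)=0$ by \eqref{eq1.4}; hence $E\equiv 0$. On the transition interval $(\delta_0/2,\delta_0)$, since $\rho(t)-r\geq \frac{1}{2}\gamma_n(t)\sim |t|^{1/4}$, Lemma \ref{lem1} and \eqref{dwtd} force $|\widehat{\omega}(t,r)+1|$, $|\widetilde{z}(t,r)|$ and all their derivatives to be $O(e^{-\alpha |t|^{1/4}/2})$. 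A direct substitution into \eqref{defF} gives $|E|=O(e^{-\alpha |t|^{1/4}/2})$, which is far smaller than $\Phi/\log|t|=1/|t|^{1/2}$ on this bounded interval.

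\textbf{Outer estimate for $E_1$.} Here $\chi\equiv 1$. Substituting \eqref{eq2.4} into \eqref{de1} cancels the slow $\frac{(n-1)(n-3)}{r^2}\omega''$ term and leaves
\begin{equation*}
E_1(t,r)=\Bigl[\rho'(t)+\frac{(n-1)^2(n-3)}{2r^3}\Bigr]\omega'\bigl(r-\rho(t)\bigr).
\end{equation*}
Writing $\rho=\gamma_n+h$ and invoking the Willmore ODE \eqref{willmoreflowradial} reduces the bracket to $h'(t)+\frac{(n-1)^2(n-3)}{2}(r^{-3}-\gamma_n^{-3})$, which by \eqref{assumeh} and $\gamma_n\sim|t|^{1/4}$ is $O(\log|t|/|t|)+O((1+|r-\gamma_n|)/|t|)$. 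Combined with $|\omega'(y)|\leq Ce^{-\alpha|y|}$ this yields $|E_1|\leq C(\log|t|/|t|)\,e^{-\alpha|r-\rho|/2}$, comfortably inside $\Phi/\log|t|$ both in the central strip and in the exponential tail.

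\textbf{Outer estimate for $E_2$.} The crucial observation is that the fourth-order symbol of $F'(\widehat{\omega})[\cdot]$ is $-\partial_{rrrr}$ while that of $(L^*)^2$ is $+\partial_{rrrr}$; matching coefficients in \eqref{eq2.13} with an explicit expansion of $L^*L^*$ gives $F'(\widehat{\omega})[\cdot]=-(L^*)^2[\cdot]+R[\cdot]$, where the remainder $R$ is a radial operator of strictly lower order whose coefficients all carry factors of $1/r$. Applying this identity to $\widetilde{z}=\frac{(n-1)(n-3)}{r^2}\widetilde{\omega}(r-\rho)$ and using the relation $(L^*)^2\widetilde{\omega}=-\omega''$ from \eqref{eq2.1}, the leading part of $F'(\widehat{\omega})[\widetilde{z}]$ is $+\frac{(n-1)(n-3)}{r^2}\omega''(r-\rho)$, which exactly cancels the term $-\frac{(n-1)(n-3)}{r^2}\partial_{rr}\widehat{\omega}$ subtracted in \eqref{de2}. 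The surviving contributions are: the leftover $-\frac{(n-1)(n-3)^2}{2r^3}\omega'$; commutators where $r^{-2}$ crosses the derivatives in $(L^*)^2$, each gaining an extra $1/r\sim|t|^{-1/4}$; the piece $R[\widetilde{z}]$; the quadratic term $F''(\widehat{\omega}+\theta\widetilde{z})[\widetilde{z},\widetilde{z}]$, of size $O(r^{-4})=O(|t|^{-1})$; and $-\partial_t\widetilde{z}$, controlled by $\rho'=O(|t|^{-3/4})$. All five are $O(|t|^{-3/4})$ or better with exponential localization $e^{-\alpha|r-\rho|/2}$, hence dominated by $\Phi/\log|t|$.

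\textbf{Main obstacle.} The delicate step is the bookkeeping behind the cancellation in $E_2$. The identity $(L^*)^2\widetilde{\omega}=-\omega''$ lives in one variable, whereas $F'(\widehat{\omega})$ is the full $n$-dimensional radial operator with coefficients $W^{(j)}(\widehat{\omega})$ depending nonlinearly on $r-\rho$ through $\widehat{\omega}$. Every commutator that arises when $r^{-2}$, $(n-1)/r$ and the smooth potential coefficients cross the fourth-order derivatives in $\widetilde{z}$ must be shown to gain at least one factor of $r^{-1}\sim|t|^{-1/4}$ while preserving exponential localization in $|r-\rho|$; only then does the residual fit under the polynomial envelope $\bigl(1+|r-\gamma_n-\frac{1}{4\alpha}\log|t||\bigr)^{-p}$ appearing in $\Phi$. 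Once this organization is in place, Lemma \ref{lem1}, \eqref{dwtd} and the constraint \eqref{assumeh} close the estimates directly.
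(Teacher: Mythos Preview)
Your proof is correct and follows essentially the same route as the paper: the same decomposition $E=E_1+E_2$ via \eqref{de1}--\eqref{de2}, the same three-region split, the same explicit formula for $E_1$ on $r>\delta_0$, and the same key cancellation in $E_2$ coming from $(L^*)^2\widetilde{\omega}=-\omega''$. The only cosmetic differences are that you invoke the Willmore ODE \eqref{willmoreflowradial} to simplify the bracket in $E_1$ (the paper just bounds $\rho'$ and $r^{-3}$ separately) and you phrase the $E_2$ remainder as ``commutators,'' whereas the paper writes the surviving terms out directly as $O\bigl(r^{-3}+\rho'(t)r^{-2}\bigr)e^{-\frac{3\alpha}{4}|r-\rho|}$ and then converts the exponential envelope into the shifted polynomial weight via $e^{-\frac{3\alpha}{4}|x|}\le C(1+|x|)^{-p}$.
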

\begin{proof}
We first estimate the term $E_1(t, r)$ in \eqref{de1}. By the definitions of $F(u)$ and $\widehat{\omega}(t, r)$ in \eqref{defF} and \eqref{eq2.3},  lemma \ref{lem1},
we have that
\begin{equation*}\begin{aligned}
  E_1(t, r)=&\left[\rho'(t)+\frac{(n-1)^2(n-3)}{2r^3}\right]\omega'\big(r-\rho(t)\big)\overline{\chi}_{\left\{ r\geq\delta_0\right\}}+O\left(\frac{1}{[\gamma_n(t)]^2}\right)\overline{\chi}_{\left\{ \frac{\delta_0}{2}<r<\delta_0\right\}},
\end{aligned}\end{equation*}
where $\overline{\chi}_{A}$ is the characteristic function of the set $A$.
Furthermore,  using the assumption of $\rho(t)$ in \eqref{drho1}-\eqref{assumeh},  the definition of $\gamma_n(t)$ in \eqref{dgamma-n} and Lemma \ref{lem1},   we have that
\begin{align}
\nonumber \abs{E_1(t, r)}
\leq&C\Bigg[\frac{1}{\abs{t}^{\frac{3}{4}}}\overline{\chi}_{\left\{ r\geq\delta_0\right\}}+\frac{1}{r^3}\overline{\chi}_{\left\{ r\geq\frac{\gamma_{n}(t)}{2}\right\}}+\frac{1}{r^3}\overline{\chi}_{\left\{ \frac{\gamma_{n}(t)}{2}\geq r\geq\delta_0\right\}}\Bigg]e^{-\alpha\abs{r-\gamma_n(t)}}
 \,+\,
\frac{C}{\abs{t}^{\frac{1}{2}}}\overline{\chi}_{\left\{ \frac{\delta_0}{2}<r<\delta_0\right\}}
\nonumber \\[3mm]
\leq &C\frac{e^{-\frac{3\alpha}{4}\abs{r-\gamma_n(t)-\frac{1}{3\alpha}\log\abs{t}}}}{\abs{t}^{\frac{1}{2}}}\overline{\chi}_{\left\{ r\geq\delta_0\right\}}
+\frac{C}{r^3}e^{-\frac{3\alpha}{4}\abs{r-\gamma_n(t)}}\overline{\chi}_{\left\{ \frac{\gamma_{n}(t)}{2}\geq r\geq\delta_0\right\}}
+\frac{C}{\abs{t}^{\frac{1}{2}}}\overline{\chi}_{\left\{ \frac{\delta_0}{2}<r<\delta_0\right\}}
\nonumber \\[3mm]
\leq& C\frac{\Phi(t, r)}{\log\abs{t}},\qquad \text{ for all}\ (t, r)\in (-\infty, -T]\times(0, +\infty),
\label{eE1}
\end{align}
where $C$ is a positive constant only depending on $\delta_0$,  $\alpha$ and $n$. Here we used  the fact that there exits a positive constant $C>0$ only depending on $\delta_0$,  $\alpha$ and $n$ such that
\begin{equation}\label{dafact}
  \frac{1}{r^3}e^{-\frac{3\alpha}{4}\abs{r-\gamma_n(t)}}\overline{\chi}_{\left\{ \frac{\gamma_{n}(t)}{2}\geq r\geq\delta_0\right\}}\leq
  Ce^{-\frac{\alpha}{4}\gamma_n(t)}\quad \text{and} \quad e^{-\frac{3\alpha}{4}\abs{x}}\leq \frac{C}{\big(1+\abs{x}\big)^{p}},
\end{equation}
for all $ x\in\R$ and $p\in(n, n+1]$, where $C>0$ does not depend on $x$ and $t$.

Next we consider the term $E_2(t, r)$ in \eqref{de2}.  Using the definitions of linear operators $F'(u)[v]$ and $F''(u)[v, v]$
in \eqref{eq2.13} and \eqref{dFs},  Lemma \ref{lem1} and estimate in \eqref{dwtd},  we derive that
\begin{equation*}
\begin{aligned}
E_2(t, r)=&\Bigg\{F'\big(\omega\big(r-\rho(t)\big)\big)\big[\widetilde{z}(t, r)\big]-\frac{(n-1)(n-3)}{r^2}\omega''\big(r-\rho(t)\big)
+\frac{(n-1)(n-3)^2}{2r^3}\omega'\big(r-\rho(t)\big)
\\[2mm]
& +F''\big(\omega\big(r-\rho(t)\big)+\theta\widetilde{z}(t, r)\big)\big[\widetilde{z}(t, r), \widetilde{z}(t, r)\big]
+\rho'(t)\omega'\big(r-\rho(t)\big)\frac{(n-1)(n-3)}{r^2}\Bigg\}\overline{\chi}_{\left\{r\geq\delta_0\right\}}\\&+O\left(\frac{1}{[\gamma_n(t)]^2}\right)\overline{\chi}_{\left\{ \frac{\delta_0}{2}<r<\delta_0\right\}}
\\[2mm]
=&\Bigg\{\left[-\Big(\partial_{rr}-W''\big(\omega\big(r-\rho(t)\big)\big)\Big)^2\widetilde{\omega}\big(r-\rho(t)\big)-\omega''\big(r-\rho(t)\big)\right]\frac{(n-1)(n-3)}{r^2}
\\[2mm]
& +O\left(\frac{1}{r^3}+\frac{\rho'(t)}{r^2}\right)e^{-\frac{3\alpha}{4}\abs{r-
\rho(t)}}
\Bigg\}\overline{\chi}_{\left\{r\geq\delta_0\right\}}
\ +\
O\left(\frac{1}{[\gamma_n(t)]^2}\right)\overline{\chi}_{\left\{ \frac{\delta_0}{2}<r<\delta_0\right\}}
\\[2mm]
=&O\left(\frac{1}{r^3}+\frac{\rho'(t)}{r^2}\right)e^{-\frac{3\alpha}{4}\abs{r-
\rho(t)}}\overline{\chi}_{\left\{r\geq\delta_0\right\}}
\ +\
O\left(\frac{1}{[\gamma_n(t)]^2}\right)\overline{\chi}_{\left\{ \frac{\delta_0}{2}<r<\delta_0\right\}},
\end{aligned}
\end{equation*}
where we used the equalities in \eqref{eq2.1} and the fact that the function $\widetilde{\omega}(t, r)$ in \eqref{w1} and its derivatives are all exponentially decaying.

By the same argument in \eqref{eE1} and the equality in \eqref{dafact},  we can get that
\begin{equation*}
  |E_2(t, r)|\leq C\frac{\Phi(t, r)}{\log\abs{t}},\qquad \text{ for all}\ (t, r)\in (-\infty, -T]\times(0, +\infty),
\end{equation*}
where $C$ is a positive constant only depending on $\alpha$ and $n$.

Eventually, combining the above estimates of the terms $E_1(t,r)$ and $E_2(t,r)$,  we can obtain the desired results.
\end{proof}
\begin{rem}
According to the above proof of Lemma \ref{lem10},  it is easy to find that the error term $E(t, r)$ has an exponentially decaying in space variable
\begin{equation}\label{deee}
  \abs{E(t, r)}\leq \frac{C}{\abs{t}^{\frac{1}{2}}e^{\frac{\alpha}{2}\abs{r-\gamma_n(t)-\frac{1}{4\alpha}\log\abs{t}}}},
\end{equation}
for all $r\geq\delta_0$,  where $C>0$ does only depend on $n$ and $\alpha$.
Our goal is to solve nonlinear problem \eqref{eq2.10}-\eqref{eq2.11}.
Hence,  according to \eqref{deee},  we may consider the following linear parabolic problem of fourth order
\begin{equation}\label{deeee}
  -\partial_t\varphi+F'(z(t, r))[\varphi]=f(t, r), \qquad  (t, r)\in(-\infty, -T)\times(0, +\infty),
\end{equation}
with the function $f(t, r)$ satisfying
\begin{equation*}
\sup_{(t, r)\in(-\infty, -T)\times(0, +\infty)}
  \frac{\abs{f(t, r)}}{\abs{t}^{\frac{1}{2}}e^{\frac{\alpha}{2}\abs{r-\gamma_n(t)-\frac{1}{4\alpha}\log\abs{t}}}}<+\infty,
\end{equation*}
where the linear operator $F'(z(t, r))$ is defined in \eqref{eq2.13}. However,  the heat kernel of linear parabolic operator
 $-\partial_t+F'(z(t, r))$ would not has the exponential decay in \eqref{deee}.
As far as we known, it can satisfy the polynomial decay. Hence, we provide an estimate of the error term $E(t,r)$ with the polynomial decay
 in the above lemma.
\qed
\end{rem}

\section{The Linear Problem}\label{sec:lp}

In this section,  firstly,  we will obtain the solvability of a class of semilinear biharmonic parabolic equations by applying
some properties of biharmonic heat kernel and fixed-point arguments,  which is given in Proposition \ref{prop8}.
Secondly,  we will prove that the linear projected problem \eqref{eq3.1} is solvable by using Proposition \ref{prop8} and a priori estimate in Lemma \ref{lem5}. The main result is given by Proposition \ref{prop2}.

\subsection{A few results of linear parabolic equations with a biharmonic operator} We first collect some known results for homogeneous biharmonic parabolic equation,   from \cite{ FG, GG1, G}. Its solution can be represented by the convolution of a biharmonic heat kernel and initial function.
\begin{prop}\label{prop6}We consider the following Cauchy problem for the biharmonic heat equation:
 \begin{equation}\label{heateq}
\left\{
\begin{array}{l}
u_t+(-\Delta)^2u=0\qquad  \text{in}\ \R^{n+1}_+:=(0, +\infty)\times\R^n,
\\[2mm]
u(0, x)=u_0(x)\hspace{0.3cm} \qquad   \text{in}\ \R^n,
 \end{array}
\right.
\end{equation}
where $n\geq 1$ and $u_0\in C^1(\R^n)\cap L^\infty(\R^n)$.
Then \eqref{heateq} admits a unique global in time solution explicitly given by
$$
u(t, x)=\int_{\R^n}u_0(y)p_n(t, x-y){\mathrm d}y,
\qquad
p_n(t, x):=\bar{\alpha}_nt^{-n/4}f_n\Big(\frac{\abs{x}}{t^{1/4}}\Big),
\qquad
\forall\,(t, x)\in \R^{n+1}_+.
$$
Here $p_n(t, x)$ is called the biharmonic heat kernel and $\bar{\alpha}_n$ denotes a suitable positive normalization  number which depends on $n$ and satisfies
$$
\bar{\alpha}_nt^{-n/4}\int_{\R^n}f_n\Big(\frac{\abs{y}}{t^{1/4}}\Big){\mathrm d}y=1,   \qquad \text{for all}\ t>0.
$$
Moreover,  the function $f_n$ is given by
$$
f_n(s):=s^{1-n}\int_{0}^{+\infty}e^{-\varrho^4}(s\varrho)^{\frac{n}{2}}J_{\frac{n-2}{2}}(s\varrho) {\mathrm d}\varrho,  \qquad \text{for}\ s>0,
$$
where $J_\nu$ denotes the $\nu$-th Bessel function of the first kind.
There exist two constants $K_n$ and $\mu_n$ depending on $n$ such that
\begin{equation}\label{esf}
\abs{f_n(s)}\leq K_n\exp(-\mu_n s^{4/3}),  \qquad \text{for all}\ s\geq 0.
\end{equation}
For the derivative of $f_n$,  the following formula holds
\begin{equation}\label{fdf}
f'_n(s)=-sf_{n+2}(s).
\end{equation}
\qed
\end{prop}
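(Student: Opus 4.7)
The plan is to establish Proposition \ref{prop6} via Fourier transform, exploiting the self-similarity of the biharmonic heat equation and the radial symmetry of the kernel. First, I would apply the spatial Fourier transform to \eqref{heateq}: writing $\widehat{u}(t,\xi)=\mathcal{F}_x u$, the PDE becomes the ODE $\widehat{u}_t+|\xi|^4\widehat{u}=0$ with $\widehat{u}(0,\xi)=\widehat{u_0}(\xi)$, which solves to $\widehat{u}(t,\xi)=e^{-t|\xi|^4}\widehat{u_0}(\xi)$. Since $e^{-t|\xi|^4}\in\mathcal{S}(\mathbb{R}^n)$ for $t>0$, inverting the transform yields $u(t,x)=(p_n(t,\cdot)\ast u_0)(x)$ where $p_n(t,x)=(2\pi)^{-n}\int_{\R^n}e^{ix\cdot\xi}e^{-t|\xi|^4}{\mathrm d}\xi$. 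Global existence and uniqueness in $C^1\cap L^\infty$ then follow from standard convolution estimates together with an energy/Fourier argument for uniqueness (if $u_0=0$, then $\widehat{u}(t,\cdot)=0$ for every $t\geq0$).

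Next I would extract the self-similar structure. The substitution $\xi=t^{-1/4}\eta$ shows immediately that $p_n(t,x)=t^{-n/4}\tilde{p}_n(|x|t^{-1/4})$ for a radial profile $\tilde{p}_n$. Using the standard Fourier--Bessel identity
\begin{equation*}
\int_{\R^n}e^{ix\cdot\xi}g(|\xi|){\mathrm d}\xi=(2\pi)^{n/2}|x|^{1-n/2}\int_0^\infty g(\varrho)\varrho^{n/2}J_{(n-2)/2}(|x|\varrho){\mathrm d}\varrho
\end{equation*}
for radial functions, applied to $g(\varrho)=e^{-\varrho^4}$ after the self-similar rescaling, produces precisely the representation for $f_n(s)$ stated in the proposition, and the normalization constant $\bar{\alpha}_n$ is then fixed by requiring $\int_{\R^n}p_n(t,y){\mathrm d}y=1$ (which holds because $\widehat{p_n}(t,0)=1$).

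The derivative identity $f_n'(s)=-s f_{n+2}(s)$ I would obtain by differentiating under the integral sign and using the Bessel recurrence $\frac{\mathrm d}{\mathrm d s}\big[s^{-\nu}J_\nu(s)\big]=-s^{-\nu}J_{\nu+1}(s)$ with $\nu=(n-2)/2$; after careful bookkeeping of the $s$-dependent prefactors in the definition of $f_n$, the identity drops out algebraically.

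The main obstacle is the exponential bound \eqref{esf}. The integrand in $f_n(s)$ is highly oscillatory: for large $s\varrho$ the asymptotics $J_{(n-2)/2}(s\varrho)\sim(s\varrho)^{-1/2}\cos(s\varrho-\text{const})$ turn $f_n(s)$ into an oscillatory integral of the form $\int_0^\infty e^{-\varrho^4}e^{is\varrho}\,a(s\varrho){\mathrm d}\varrho$ (plus its conjugate). The naive bound only gives polynomial decay. To capture $\exp(-\mu_n s^{4/3})$ I would deform the contour into the complex $\varrho$-plane and locate the saddle point of the phase $\varrho^4-is\varrho$, which sits at $\varrho_\ast\sim s^{1/3}$ with critical value $\RE(\varrho^4-is\varrho)|_{\varrho_\ast}\sim -c\,s^{4/3}$; this gives the $s^{4/3}$ rate in the exponent and the constants $K_n,\mu_n$. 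One has to verify uniform integrability along the deformed contour and handle the Bessel endpoint behavior near $\varrho=0$ separately (using $J_\nu(s\varrho)\sim(s\varrho)^\nu$ there), but these pieces are small compared with the saddle contribution. This is the only step that requires genuine analysis; all remaining assertions are symbolic consequences of Fourier inversion and Bessel identities.
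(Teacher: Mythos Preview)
The paper does not prove this proposition at all: it is stated with a \qed\ symbol and introduced as a collection of ``some known results for homogeneous biharmonic parabolic equation'' taken from \cite{FG,GG1,G}. So there is no in-paper argument to compare against; the proposition is simply quoted from the literature.

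Your sketch is essentially the standard derivation that appears in those references. The Fourier representation, the self-similar scaling, and the Hankel (Fourier--Bessel) formula are exactly how the kernel and the profile $f_n$ are obtained; the recursion $f_n'(s)=-sf_{n+2}(s)$ is indeed a one-line consequence of the Bessel identity $\frac{\mathrm{d}}{\mathrm{d}z}[z^{-\nu}J_\nu(z)]=-z^{-\nu}J_{\nu+1}(z)$; and the stretched-exponential bound \eqref{esf} is proved in \cite{FG,G} precisely by a complex contour deformation/steepest-descent argument of the type you describe. One small caution: the uniqueness claim needs a bit more care than ``$\widehat{u}=0$'', since $u_0\in C^1\cap L^\infty$ is not necessarily a tempered distribution with well-behaved Fourier transform for which the ODE argument is literal; the cited references handle this via the explicit kernel bounds rather than a pure Fourier argument. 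Otherwise your outline is sound and matches the literature the paper defers to.
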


Next we will use the above results to  study the following inhomogeneous problem:
\begin{equation}\label{noneq}
\left\{
\begin{array}{l}
u_t+(-\Delta)^2u=f(t, x)\qquad\text{in}\ (t_0, t_1)\times\R^{n},
\\[2mm]
u(t_0, x)=u_0(x)\hspace{1.7cm} \text{in}\ \R^n,
 \end{array}
\right.
\end{equation}
where $f(t, x)\in  L^\infty((t_0, t_1)\times\R^{n})$ and $u_0\in C^1(\R^n)\cap L^\infty(\R^n)$.
\begin{defn}\label{demild}
Assume that $t_1\in (t_0, +\infty)$. We say that $u\in L^\infty((t_0, t_1)\times\R^n)$ is a mild solution of \eqref{noneq} if
\begin{equation}\label{demil}
u(t, x)=\Gamma_{t-t_0}[u_0](x)+\int^t_{t_0}\Gamma_{t-\tau}[f(\tau, \cdot)](x){\mathrm d}\tau,\qquad\text{for}\ (t, x)\in (t_0, t_1)\times\R^n.
\end{equation}
Here $\Gamma_{t}$ is a linear operator defined by
$$
\Gamma_{t}[u](x):=\Big(p_n(t, \cdot)\ast u\Big)(x)=\int_{\R^n}p_n(t, x-y)u(y){\mathrm d}y,
$$
where the biharmonic heat kernel $p_n(t, x)$ is given by Proposition \ref{prop6}.
In addition,  when $t_1=+\infty$,  a mild solution also can be defined by
\eqref{demil} provided $u(t,  x)\in L^\infty_{loc}(t_0, +\infty)\times L^\infty(\R^n)$.
\end{defn}

Notice that it is not hard to verify that $u$ is a mild solution of \eqref{noneq} if and only if $u$ solves it in the pointwise sense.
We give some regularity estimates for mild solutions.
\begin{prop}\label{prop7}Let $u$ be a mild solution of problem \eqref{noneq}. Assume that $t_1<+\infty$. Then the following estimates are valid:
 \begin{itemize}
   \item If $u_0\in L^{\infty}(\R^n)\cap C^1(\R^n)$,  then for every $t_*\in(t_0, t_1)$,  $t_*>0$,  $\vartheta\in (0, 4)$ and $\theta\in (0, 1)$,  it holds that
   \begin{equation}\label{es1}
     \begin{aligned}
   \sup_{t\in(t_*, t_1)}\|u(t, \cdot)\|_{C^{\vartheta}(\R^n)}
   +\sup_{x\in\R^n}\|u(\cdot, x)\|_{C^{\theta}(t_*, t_1)}
   \leq
   C_1\Big(\|f\|_{L^{\infty}((t_*, t_1)\times\R^n)}+\|u_0\|_{L^\infty(\R^n)}\Big),
     \end{aligned}
   \end{equation}
   for some positive constant $C_1$ depending on $n, \theta, t_1-t_0,  t_1-t_*, \vartheta$ and $t_*$.
   \item If $u_0\in C^\vartheta(\R^n)$ for some $\vartheta\in (0, 4)$ and $t_0>0$,  then it holds
   \begin{equation}\label{es2}
     \begin{aligned}
     \sup_{t\in(t_0, t_1)}\|u(t, \cdot)\|_{C^{\vartheta}(\R^n)}+\sup_{x\in\R^n}\|u(\cdot, x)\|_{C^{\frac{\vartheta}{4}}(t_0, t_1)}
     \leq C_{2}\Big(\|f\|_{L^{\infty}((t_0, t_1)\times\R^n)}+\|u_0\|_{C^\vartheta(\R^n)}\Big),
     \end{aligned}
   \end{equation}
   for some positive constant $C_2$ depending on $t_1-t_0,  n$ and $\vartheta$.
 \end{itemize}
\end{prop}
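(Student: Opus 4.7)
The plan is to work directly with the mild representation \eqref{demil}. I would split $u = u_1 + u_2$ with
\begin{equation*}
u_1(t,x) := \Gamma_{t-t_0}[u_0](x), \qquad u_2(t,x) := \int_{t_0}^{t}\Gamma_{t-\tau}[f(\tau,\cdot)](x)\, \mathrm{d}\tau,
\end{equation*}
and bound each piece separately. The key inputs would be pointwise and $L^1$ estimates on derivatives of $p_n$: from the decay \eqref{esf}, the recursion \eqref{fdf} iterated, the scaling $p_n(t,x) = \bar{\alpha}_n t^{-n/4} f_n(|x| t^{-1/4})$, and $\partial_t p_n = -(-\Delta)^2 p_n$, I obtain for every multi-index $\alpha$ and every integer $k\geq 0$ that
\begin{equation*}
|\partial_x^\alpha p_n(t,x)| \leq C_\alpha\, t^{-(n+|\alpha|)/4}\exp\bigl(-c(|x|/t^{1/4})^{4/3}\bigr), \quad \|\partial_x^\alpha\partial_t^k p_n(t,\cdot)\|_{L^1(\R^n)} \leq C_{\alpha,k}\, t^{-|\alpha|/4-k},
\end{equation*}
together with $\|p_n(t,\cdot)\|_{L^1}\leq K$ and $\int p_n(t,y)\,\mathrm{d}y = 1$. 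These are the only properties of the kernel the argument will use.

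For \eqref{es1}, I would exploit that $t - t_0 \geq t_* - t_0 > 0$ throughout $(t_*, t_1)$ and place every spatial derivative on the kernel, giving $\|\nabla^j u_1(t,\cdot)\|_\infty \leq C(t_*-t_0)^{-j/4}\|u_0\|_\infty$ for $j = 0,1,2,3,4$; this will imply $\|u_1(t,\cdot)\|_{C^\vartheta}\leq C\|u_0\|_\infty$ via the elementary interpolation $\|v\|_{C^\vartheta}\leq C\|v\|_{C^{\lfloor\vartheta\rfloor+1}}$. For the Duhamel part,
\begin{equation*}
\|\nabla^j u_2(t,\cdot)\|_\infty \leq \|f\|_\infty \int_{t_0}^t (t-\tau)^{-j/4}\,\mathrm{d}\tau \leq C(t_1-t_0)^{1-j/4}\|f\|_\infty
\end{equation*}
is finite precisely for $j<4$, covering any $\vartheta\in(0,4)$. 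For temporal Hölder regularity I would estimate $u(t+h,x) - u(t,x)$ directly: the mean value theorem applied to $u_1$ together with $\|\partial_t p_n\|_{L^1}\leq C/(t-t_0)$ yields a Lipschitz bound, while splitting the Duhamel difference into $\int_t^{t+h}\Gamma_{t+h-\tau}[f]$ plus $\int_{t_0}^t(\Gamma_{t+h-\tau}-\Gamma_{t-\tau})[f]$ and using $\int|p_n(t+h-\tau,y)-p_n(t-\tau,y)|\,\mathrm{d}y \leq C\log(1+h/(t-\tau))$ will give the almost-Lipschitz bound $|u_2(t+h,x)-u_2(t,x)|\leq C\|f\|_\infty\, h|\log h|$, implying $C^\theta$ in time for every $\theta\in(0,1)$.

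For \eqref{es2}, the Hölder regularity of $u_0$ must be transferred through the convolution itself: since $\int p_n = 1$ and $\|p_n(t,\cdot)\|_{L^1}\leq K$ uniformly in $t$, a standard modulus-of-continuity argument would give $\|u_1(t,\cdot)\|_{C^\vartheta}\leq C\|u_0\|_{C^\vartheta}$ for $\vartheta\in(0,1)$, and for higher $\vartheta\in(0,4)$ by distributing derivatives between $p_n$ and $u_0$. For $u_2$ I would invoke the fourth-order parabolic Schauder estimate: to control the $\sigma$-Hölder seminorm of $\nabla^{\lfloor\vartheta\rfloor} u_2$ with $\sigma = \vartheta - \lfloor\vartheta\rfloor$, I split the Duhamel integral at $\tau = t - |x-x'|^4$, use $\|\nabla^{\lfloor\vartheta\rfloor+1} p_n(t,\cdot)\|_{L^1}\leq C t^{-(\lfloor\vartheta\rfloor+1)/4}$ on the near-diagonal part (integrable in $\tau$ since $\lfloor\vartheta\rfloor+1\leq 4$) and $\|\nabla^{\lfloor\vartheta\rfloor} p_n(t,\cdot)\|_{L^1}$ on the far part. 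The $C^{\vartheta/4}$ time bound will follow by an analogous decomposition at $\tau = t - |h|$ combined with the parabolic scaling $|h|\sim r^4$.

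The main obstacle will be this Schauder-type estimate for $u_2$ in \eqref{es2}. The biharmonic kernel changes sign (which is precisely why the maximum principle fails, as noted in the third item of the remark in Section \ref{sec:a}), so every bound must pass through $|p_n|$ via its $L^1$ norm; however, the $L^1$ bounds on $\nabla^k p_n$, $k<4$, scale exactly like those of the classical heat kernel under the parabolic dilation $x\mapsto\lambda x$, $t\mapsto\lambda^4 t$, so the sign change is not a genuine obstruction. The delicate step is matching the spatial and temporal Hölder exponents through this scaling and handling the logarithmic loss at the critical endpoint $j=4$, which is exactly why the hypothesis restricts $\vartheta$ to $(0,4)$ rather than allowing $\vartheta = 4$.
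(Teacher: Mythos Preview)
Your proposal is correct and follows essentially the same route as the paper: the same decomposition $u=U_1+U_2$, the same kernel derivative bounds coming from \eqref{esf}--\eqref{fdf} and scaling, the same $\|\nabla^j p_n(t,\cdot)\|_{L^1}\le Ct^{-j/4}$ estimates for the Duhamel piece, and the same $h|\log h|$ temporal bound for $U_2$. The only cosmetic difference is that for the top-order spatial H\"older seminorm of $U_2$ you split the $\tau$-integral at $t-|x-x'|^4$, whereas the paper instead rescales $y\mapsto (t-\tau)^{1/4}y$ and uses the H\"older continuity of $\nabla^3 f_n$ directly; the two devices are interchangeable and yield the same $(t-t_0)^{(1-\theta)/4}\|f\|_\infty$ bound.
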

\begin{proof}
According to the formula \eqref{fdf} and some direct computations,  we can derive some estimates for the derivatives of biharmonic heat kernel $p_n(t, x)$ as follows:
\begin{itemize}\label{1}
  \item $\abs{\partial_tp_n(t, x)}\leq C_n\Big(t^{-1}\abs{p_n(t, x)}+t^{-1}\abs{x}^2\abs{p_{n+2}(t, x)}\Big)$;
\medskip
  \item $\abs{\nabla_x p_n(t, x)}\leq C_n\abs{x}\abs{p_{n+2}(t, x)}$;
\medskip
  \item $\abs{\nabla^2_x p_n(t, x)}\leq C_n\Big(\abs{p_{n+2}(t, x)}+\abs{x}^2\abs{p_{n+4}(t, x)}\Big)$;
\medskip
  \item $\abs{\nabla^3_x p_n(t, x)}\leq C_n\Big(\abs{x}\abs{p_{n+4}(t, x)}+\abs{x}^3\abs{p_{n+6}(t, x)}\Big)$;
\medskip
  \item $\abs{\nabla^4_x p_n(t, x)}\leq C_n\Big(\abs{p_{n+4}(t, x)}+\abs{x}^2\abs{p_{n+6}(t, x)}+\abs{x}^4\abs{p_{n+8}(t, x)}\Big)$,
\end{itemize}
where $C_n>0$ only depends on $n$.

We decompose the mild solution in (\ref{demil}) as follows
\begin{equation*}
  u(t, x)=U_1(t, x)+U_2(t, x),
\end{equation*}
where
$$
U_1(t, x)=\Gamma_{t-t_0}[u_0](x),
\quad
U_2(t, x)=\int^t_{t_0}\Gamma_{t-\tau}[f(\tau, \cdot)](x){\mathrm d}\tau.
$$

The analysis will begin with the first term $U_1(t, x)$.
For any $t>t_0$ and $x\in\R^n$,  using the above estimates and \eqref{esf},  we derive that
\begin{align*}
\abs{\partial_tU_1(t, x)}=\abs{\int_{\R^n}\partial_tp_n(t-t_0, x-y)u_0(y){\mathrm d}y}
\leq \frac{C_n}{t-t_0}\|u_0\|_{L^\infty(\R^n)}.
\end{align*}
Moreover,  using (\ref{esf}),  we have that
$$
\abs{U_1(t, x)}\leq\int_{\R^n}\abs{p_n(t-t_0, x-y)u_0(y)}{\mathrm d}y\leq C_n\|u_0\|_{L^\infty(\R^n)}.
$$

For any $\vartheta\in(0, 4)$ and $t,  t^1\in(t_0, t_1)$,  using the fact $p_n(t, x)=t^{-n/4}p_n(\frac{x}{t^{1/4}}, 1)$,
\begin{equation}\label{esq}\begin{aligned}
\frac{\abs{U_1(t, x)-U_1(t^1, x)}}{\abs{t-t^1}^{\frac{\vartheta}{4}}}&\leq\abs{\int_{\R^n}
\frac{u_0\big((t-t_0)^{1/4}y\big) \,-\, u_0\big((t^1-t_0)^{1/4}y\big)}
{\abs{t-t^1}^{\frac{\vartheta}{4}}}p_n(1, y){\mathrm d}y}
\\[2mm]
&\leq\|u_0\|_{C^\vartheta(\R^n)}\int_{\R^n}\abs{p_n(1, y)}\abs{y}^\vartheta {\mathrm d}y\leq C_n\|u_0\|_{C^\vartheta(\R^n)}.
\end{aligned}\end{equation}
Here we have used the inequality $a^{\vartheta/4}-b^{\vartheta/4}\leq (a-b)^{\vartheta/4}$ for any $a\geq b\geq0$ and $\vartheta\in(0, 4)$ in the second inequality,  $C_n>0$ only depends on $n$.

Moreover,  for the derivatives of $U_1$ with respect to $x$,  by (\ref{esf}),  we have that
\begin{align*}
\abs{\nabla_xU_1(t, x)}&=\abs{\int_{\R^n}\nabla_x p_n(t-t_0, x-y)u_0(y){\mathrm d}y}
\\[2mm]
&\leq C_n\|u_0\|_{L^\infty(\R^n)}\int_{\R^n}\abs{y}\abs{p_{n+2}(y, t-t_0)}{\mathrm d}y
\\[2mm]
&\leq \frac{C_n}{(t-t_0)^{1/4}}\|u_0\|_{L^\infty(\R^n)},
\end{align*}
where $C_n$ does only depend on $n$.
Similarly,  we can get that
\begin{align*}
\abs{\nabla^2_xU_1(t, x)}\leq&\frac{C_n}{(t-t_0)^{1/2}}\|u_0\|_{L^\infty(\R^n)},
\qquad
\abs{\nabla^3_xU_1(t, x)}\leq \frac{C_n}{(t-t_0)^{3/4}}\|u_0\|_{L^\infty(\R^n)},
\end{align*}
and $$\abs{\nabla^4_xU_1(t, x)}\leq \frac{C_n}{(t-t_0)}\|u_0\|_{L^\infty(\R^n)}.$$
Thus for any $x, y\in\R^n$ and $\theta\in(0, 1)$,  we have
$$[U_1]_\theta(t):=\frac{\abs{U_1(t, x)-U_1(t, y)}}{\abs{x-y}^\theta}\leq\left\{
\begin{array}{l}
2\|U_1(t, \cdot)\|_{L^\infty(\R^n)},\  \quad \ \quad \text{if}\ \abs{x-y}\geq1,
\\[2mm]
\|\nabla _x U_1(t, \cdot)\|_{L^\infty(\R^n)},\qquad \text{if}\ \abs{x-y}\leq1.
 \end{array}
\right.$$
Hence,  for any $t_*>t_0$,  we have
$$\sup_{t\in(t_*, t_1)}[U_1]_\theta(t)\leq \frac{C_n}{(t_*-t_0)^{1/4}}\|u_0\|_{L^\infty(\R^n)}.$$
By similar arguments,  we can obtain the following estimates
$$\sup_{t\in(t_*, t_1)}[\nabla U_1]_\theta(t)\leq \frac{C_n}{(t_*-t_0)^{1/2}}\|u_0\|_{L^\infty(\R^n)},
\qquad
\sup_{t\in(t_*, t_1)}[\nabla^2 U_1]_\theta(t)\leq \frac{C_n}{(t_*-t_0)^{3/4}}\|u_0\|_{L^\infty(\R^n)},
$$
and
$$
\sup_{t\in(t_*, t_1)}[\nabla^3 U_1]_\theta(t)\leq \frac{C_n}{(t_*-t_0)}\|u_0\|_{L^\infty(\R^n)}.
$$
Combining the above estimates,   we have that for any $\vartheta\in(0, 4)$,  $t_*>t_0$ and $\theta\in(0, 1)$,
\begin{equation}\label{esU1}
\sup_{t\in(t_*, t_1)}\|U_1(t, \cdot)\|_{C^{\vartheta}(\R^n)}+\sup_{x\in\R^n}\|U_1(\cdot, x)\|_{C^{\theta}(t_*, t_1)}
\leq
C_{n}\big[1+\varrho(t_*-t_0)\big]\|u_0\|_{L^\infty(\R^n)}
\end{equation}
where $\varrho(t_*-t_0)=\frac{1}{(t_*-t_0)}+\frac{1}{(t_*-t_0)^{1/4}}+\frac{1}{(t_*-t_0)^\theta}$.

Using the formula
$$
U_1(t, x)=\int_{\R^n}p_n(t-t_0, y)u_0(x-y){\mathrm d}y
$$
and \eqref{esq},  for $\vartheta\in (0, 4)$,  we have
\begin{equation}\label{esU2}
  \sup_{t\in(t_0, t_1)}\|U_1(t, \cdot)\|_{C^{\vartheta}(\R^n)}+\sup_{x\in\R^n}\|U_1(\cdot, x)\|_{C^{\frac{\vartheta}{4}}(t_0, t_1)}
     \leq C_{n}\|u_0\|_{C^\vartheta(\R^n)},
\end{equation}
where $C_{n}$ is a positive constant which depends on $n, \vartheta$.

Next we will estimate $U_2(t, x)$. First we find that
\begin{align*}
\abs{U_2(t, x)}\leq& C_n\int_{t_0}^t\int_{\R^n}\abs{p_n(t-\tau, x-y)f(\tau,  y)}{\mathrm d}y{\mathrm d}\tau \\[2mm]
\leq& C_n(t_1-t_0)\|f\|_{L^{\infty}}((t_1, t)\times\R^n).
\end{align*}
 For any $t, t^1\in(t_0, t_1)$,  by the previous estimate of $\partial_tp_n(t, x)$,   we have
$$
\abs{p_n(t, x)-p_n(t^1, x)}\leq C_n\abs{\log t- \log t^1}\Big(\abs{p_n\big((1-\theta_1)t+\theta_1 t^1, x\big)}+\abs{x}^2\abs{p_{n+2}\big((1-\theta_1)t+\theta_1 t^1, x\big)}\Big),
$$
for some $\theta_1\in(0, 1)$.
Using the above inequality and \eqref{esf},  for $\theta\in (0, 1)$,  we have
\begin{align*}
\frac{\abs{U_2(t, x)-U_2(t^1, x)}}{\abs{t-t^1}^\theta}\leq&
  \frac{\abs{\int_{t_0}^t\int_{\R^n}\Big[p_n(t-\tau, x-y)-p_n(t^1-\tau, x-y)\Big]f(y, \tau){\mathrm d}y{\mathrm d}\tau}}{\abs{t-t^1}^\theta}
  \\[2mm]
&+\frac{\abs{\int^t_{t^1}\int_{\R^n}p_n(t^1-\tau, x-y)f(\tau, y){\mathrm d}y{\mathrm d}\tau}}{\abs{t-t^1}^\theta}
  \\[2mm]
  \leq &C_n\Bigg(1+\log\frac{t_1-t_0}{\abs{t-t^1}}\Bigg)\abs{t-t^1}^{1-\theta}\|f\|_{L^{\infty}((t_1, t)\times\R^n)},
\end{align*}
where $C_n$ is a positive constant which only depends on $n$.

Second,  we study the derivatives of $U_2(t, x)$ with respect to $x$. By some direct computations and (\ref{esf}),  we have
\begin{align*}
\abs{\nabla_xU_2(t, x)}&=\abs{\int_{t_0}^{t}\int_{\R^n}\nabla_x p_n(t-\tau, x-y)f(\tau, y){\mathrm d}y{\mathrm d}\tau}
\\[2mm]
&\leq C_n
\|f\|_{L^{\infty}((t_0, t)\times\R^n)}\int_{t_0}^t\int_{\R^n}\abs{y}\abs{p_{n+2}(t-\tau, y)}{\mathrm d}y{\mathrm d}\tau
\\[2mm]
&\leq C_n \|f\|_{L^{\infty}((t_0, t)\times\R^n)}\int_{t_0}^t(t-\tau)^{-1/4}{\mathrm d}\tau
\\[2mm]
&\leq C_n (t-t_0)^{3/4} \|f\|_{L^{\infty}((t_0, t)\times\R^n)}.
\end{align*}
By the same arguments as above,  we can derive that
$$
\abs{\nabla^2_xU_2(t, x)}\leq C_n (t-t_0)^{1/2} \|f\|_{L^{\infty}((t_0, t)\times\R^n)},
\qquad
\abs{\nabla^3_xU_2(t, x)}\leq C_n (t-t_0)^{1/4} \|f\|_{L^{\infty}((t_0, t)\times\R^n)},
$$
and
for any $\theta\in(0, 1)$,
\begin{align*}
&\frac{\abs{\nabla^3_xU_2(t, x_1)-\nabla^3_xU_2(t, x_2)}}{\abs{x_1-x_2}^\theta}
\\[2mm]
& =\alpha_n\frac{\abs{\int_{t_0}^t\int_{\R^n}(t-\tau)^{-\frac{n+3}{4}}\Big[\nabla^3_xf_n\Big(\frac{\abs{x_1-y}}{(t-\tau)^{1/4}}\Big)-
 \nabla^3_xf_n\Big(\frac{\abs{x_2-y}}{(t-\tau)^{1/4}}\Big)\Big]f(y, \tau){\mathrm d}y{\mathrm d}\tau}}{\abs{x_1-x_2}^\theta}
 \\[2mm]
&\leq C_n \frac{\abs{\int_{t_0}^t\int_{\R^n}(t-\tau)^{-\frac{3}{4}}\Big[\nabla^3_xf_n\Big(\abs{y-\frac{x_1}{(t-\tau)^{1/4}}}\Big)-
 \nabla^3_xf_n\Big(\abs{y-\frac{x_2}{(t-\tau)^{1/4}}}\Big)\Big]f\Big((t-\tau)^{1/4}y, \tau\Big){\mathrm d}y{\mathrm d}\tau}}{\abs{x_1-x_2}^\theta}
\\[2mm]
 &\leq C_n(t-t_0)^{(1-\theta)/4}\|f\|_{L^{\infty}((t_0, t)\times\R^n)}.
 \end{align*}

Combing the above estimates of $U_2$,  for any $\vartheta\in(0, 4)$ and $\theta\in(0, 1)$,  we have that
\begin{equation}\label{esU21}
\sup_{t\in(t_0, t_1)}\|U_2(t, \cdot)\|_{C^{\vartheta}(\R^n)}+\sup_{x\in\R^n}\|U_2(\cdot, x)\|_{C^{\theta}(t_0, t_1)}
\leq
C_{n}\Big[\upsilon(t_1-t_0)+(t_1-t_0)^{1-\theta}\Big]\|f\|_{L^{\infty}((t_0, t)\times\R^n)}.
\end{equation}
On the other hand,  for $\vartheta\in(0, 4)$,  there holds
\begin{equation}\label{esU22}
  \sup_{t\in(t_0, t_1)}\|U_2(t, \cdot)\|_{C^{\vartheta}(\R^n)}+\sup_{x\in\R^n}\|U_2(\cdot, x)\|_{C^{\frac{\vartheta}{4}}(t_0, t_1)}
    \leq C_{n}\upsilon(t_1-t_0)\|f\|_{L^{\infty}((t_0, t)\times\R^n)},
\end{equation}
where $\upsilon(t-t_0)=(t-t_0)+(t-t_0)^{1-\frac{\vartheta}{4}}$ and $C_{n}$ is a positive constant which depends on $n$.

 The Proposition \ref{prop7} follows from \eqref{esU1},  \eqref{esU2},  \eqref{esU21} and \eqref{esU22}.
\end{proof}

We will apply the above proposition to study the solvability of a class of semilinear parabolic equations.
Let $u_0\in C^1(\R^n)\cap L^\infty(\R^n)$,  and we consider the initial value problem
\begin{equation}\label{heatnonlineareq}
\left\{
\begin{array}{l}
u_t+(-\Delta)^2u=G[\Delta u, \nabla u, u, t, x]\qquad \text{in}\ (t_0, t_1)\times\R^n,
\\[2mm]
 u(t_0, x)=u_0(x)\hspace{3.35cm} \text{in}\ \R^n,
 \end{array}
\right.
\end{equation}
where $G[p, \vec{q}, s, t, x]:\R\times\R^n\times\R\times[t_0, +\infty)\times\R^n\rightarrow \R$  is a measurable function satisfying
\begin{enumerate}
\item
For every $M>0$ and $T>t_0$,  there exists $C_{T, M}>0$ such that
$\Big{|}G[p, \vec{q}, s, t, x]\Big{|}\leq C_{T, M}$ for all $x\in\R^n$,  $t\in[t_0, T]$ and $p, \abs{\vec{q}}, s \in[-M, M]$.
\item
There is a constant $\sigma>0$ in such a way that
\begin{equation}\label{delpf}
  \Big{|}G[p_1, \vec{q}_1, s_1, t, x]-G[p_2, \vec{q}_2, s_2, t, x]\Big{|}\leq \sigma\Big(\abs{p_1-p_2}+\abs{\vec{q}_1-\vec{q}_2}+\abs{s_1-s_2}\Big),
\end{equation}
for all $ x\in\R^n, t\geq t_0$,  $p_1, p_2 \in\R$,  $\vec{q}_1, \vec{q}_2\in\R^n$ and $s_1, s_2\in\R$.
\end{enumerate}
 In particular,  we can take
\begin{equation}\label{defG}
G[\Delta u, \nabla u, u,  t,  x]=a(t, x)\Delta u+\sum_{i=1}^nb^i(t, x)\nabla_{x_i} u+c(t, x)u+g(t, x),
\end{equation}
where those functions $a(t, x)$,  $b^1(t, x), \cdots,  b^n(t,  x)$,  $c(t, x)$ are all belonging to $L^\infty ([t_0, +\infty), C^1(\R^n))$ and $g\in L^\infty([t_0, +\infty)\times\R^n)$.

Assume that $u(t, x)$ satisfies that
$$
\Delta u, \ \abs{\nabla u}, \ u\in L^{\infty}((t_0, t_1)\times\R^n),
$$
then by \eqref{demil},  $u$ is a mild solution of
\eqref{heatnonlineareq} if and only if
\begin{equation}\label{demils}
u(t, x)=\Gamma_{t-t_0}[u_0](x)+\int^t_{t_0}\Gamma_{t-\tau}\Big[G[\Delta u,  \nabla u,  u, \tau,  \cdot](\tau, \cdot)\Big](x){\mathrm d}\tau\qquad\text{for}\ (t, x)\in (t_0, t_1)\times\R^n.
\end{equation}
Notice that $G[\Delta u,  \nabla u,  u,  t,  x]\in L^{\infty}((t_0, t_1)\times\R^n)$ by the assumptions on $u$ and $G$.
Thus (\ref{demils}) is well defined by Proposition \ref{prop7}.

Now we devote to the study of the solvability of problem \eqref{heatnonlineareq}.
For convenience,  we define a map $\mathcal{N}_{G, u_0}$
from $L^\infty[(t_0, t_1); C^{2}(\R^{n})]$ to itself as follows
$$\mathcal{N}_{G, u_0}[u](t, x):=\Gamma_{t-t_0}[u_0](x)+\int^t_{t_0}\Gamma_{t-\tau}\Big[G[\Delta u,  \nabla u,  u](\cdot, \tau)\Big](x){\mathrm d}\tau\qquad\text{for}\ (x, t)\in \R^n\times(t_0, t_1), $$
for $u_0\in C^1(\R^n)\cap L^\infty(\R^n)$ and $u\in L^\infty[(t_0, t_1); C^{2}(\R^{n})]$. The main result is the following:

\begin{prop}\label{prop8}
Given any $u_0\in C^1(\R^n)\cap L^\infty(\R^n)$,  there exists a unique mild solution $u$ to problem \eqref{heatnonlineareq} with $t_1=+\infty$. Moreover,  if $u_0\in C^2(\R^n)$,  then the solution $u$ has the property: for $s>t_0$,  there exists $C$ independent of $t$ such that
\begin{equation}\label{ess}
\sup_{t\in(t_0, s)}\|u(t, \cdot)\|_{C^{2}(\R^n)}
 \leq C\Big(\Big[(s-t_0)+(s-t_0)^{1/2}\Big]\|G(0, 0, 0, x, t)\|_{L^{\infty}((t_0, s)\times\R^n)}+\|u_0\|_{C^2(\R^n)}\Big).
\end{equation}
\end{prop}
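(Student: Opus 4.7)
The plan is to recast the problem as a fixed-point equation for the Duhamel map $\mathcal{N}_{G,u_0}$ and apply Banach's contraction principle in a suitable space. Since the nonlinearity $G$ depends on $\Delta u$ and $\nabla u$, I need control on the full $C^2$ norm of $u$; however for $u_0 \in C^1 \cap L^\infty$ such control is not uniform down to $t = t_0$, so the natural setting is a weighted space that allows a mild blow-up at the initial time.

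First I would define, for small $\tau > 0$, the Banach space
\begin{equation*}
X_\tau = \Big\{\, u \in L^\infty((t_0, t_0+\tau)\times\R^n) : \sup_{t \in (t_0,t_0+\tau)} \big[\,\|u(t,\cdot)\|_{L^\infty} + (t-t_0)^{1/2}\|u(t,\cdot)\|_{C^2(\R^n)}\,\big] < \infty\,\Big\}.
\end{equation*}
Using the derivative bounds on the biharmonic kernel established in the proof of Proposition \ref{prop7}, one checks that $\Gamma_{t-t_0}u_0 \in X_\tau$ with norm controlled by $\|u_0\|_{L^\infty} + \|u_0\|_{C^1}$, and that if $f \in L^\infty$, the Duhamel integral $\int_{t_0}^t \Gamma_{t-\tau}[f(\tau,\cdot)]\,d\tau$ lies in $X_\tau$ with norm at most $C\tau^{1/2}\|f\|_{L^\infty}$ (the kernel estimates for $\nabla_x^2 p_n$ give a smoothing factor $(t-\tau)^{-1/2}$). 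Hence $\mathcal{N}_{G,u_0}$ maps $X_\tau$ into itself.

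For the contraction, the Lipschitz hypothesis \eqref{delpf} yields
\begin{equation*}
\big\|G[\Delta u,\nabla u, u,t,\cdot] - G[\Delta v,\nabla v, v,t,\cdot]\big\|_{L^\infty}
\leq 3\sigma\|u(t,\cdot) - v(t,\cdot)\|_{C^2} \leq 3\sigma\,(t-t_0)^{-1/2}\|u-v\|_{X_\tau}.
\end{equation*}
The Beta-function identity $\int_{t_0}^t (t-\tau)^{-1/2}(\tau-t_0)^{-1/2}d\tau = \pi$ then gives, after multiplication by the weight $(t-t_0)^{1/2}$,
\begin{equation*}
\|\mathcal{N}_{G,u_0}[u] - \mathcal{N}_{G,u_0}[v]\|_{X_\tau} \leq K\sigma\,\tau^{1/2}\,\|u-v\|_{X_\tau},
\end{equation*}
for some $K = K(n)$. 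Choosing $\tau$ small so that $K\sigma\tau^{1/2} < 1$ produces a unique fixed point on $[t_0, t_0+\tau]$. Since the contraction constant depends only on $n, \sigma$ (not on the data), and since at time $t_0+\tau$ the solution lies in $C^2(\R^n)$, the argument can be iterated on successive intervals of length $\tau$ to produce a unique global mild solution on $[t_0,\infty)$.

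For the regularity bound \eqref{ess}, assume $u_0 \in C^2(\R^n)$ and apply estimate \eqref{es2} of Proposition \ref{prop7} with $\vartheta = 2$ and $f = G[\Delta u,\nabla u, u,t,x]$. Splitting $f$ via the Lipschitz bound gives $|f(t,x)| \leq |G[0,0,0,t,x]| + 3\sigma\|u(t,\cdot)\|_{C^2}$. Setting $M(t) := \sup_{\tau \in (t_0,t)}\|u(\tau,\cdot)\|_{C^2}$ and using the sharper form of the $U_2$-estimate derived in the proof of Proposition \ref{prop7} (which gives the $[(s-t_0)+(s-t_0)^{1/2}]$ factor), we obtain an integral inequality of the form
\begin{equation*}
M(t) \leq C\|u_0\|_{C^2} + C\big[(t-t_0)+(t-t_0)^{1/2}\big]\|G[0,0,0,\cdot,\cdot]\|_{L^\infty((t_0,t)\times\R^n)} + C\sigma\int_{t_0}^t M(\tau)\,d\tau,
\end{equation*}
and Gronwall's lemma yields \eqref{ess}, with the exponential Gronwall factor $e^{C\sigma(s-t_0)}$ absorbed into the constant $C$ on the finite interval $(t_0,s)$. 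The principal obstacle is the interplay at $t = t_0$ between the singular weight $(t-t_0)^{-1/2}$ arising from the $C^2$-estimate of the Lipschitz term and the $(t-\tau)^{-1/2}$ smoothing of the biharmonic kernel; the choice of weighted norm is precisely what makes the convolution integrable (through the Beta function) and produces a universal contraction constant permitting global iteration.
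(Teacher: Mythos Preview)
Your approach is correct and in one respect more careful than the paper's. The paper runs the contraction directly in the \emph{unweighted} space $L^\infty\big((t_0,t_1);C^2(\R^n)\big)$ for small $t_1-t_0$: it bounds $\mathcal{N}_{G,u_0}[u]-\mathcal{N}_{G,u_0}[w]$ by $\sigma(t_1-t_0)C_n$ times the $C^2$ distance (implicitly upgrading from the pointwise bound to the full $C^2$ norm via the $U_2$-estimates of Proposition~\ref{prop7}), chooses $t_1-t_0\leq(2\sigma C_n)^{-1}$, and iterates on intervals of that fixed length. This is shorter but glosses over the fact that for $u_0$ merely in $C^1\cap L^\infty$ the linear flow $\Gamma_{t-t_0}u_0$ need not lie in $L^\infty\big((t_0,t_1);C^2\big)$; your weighted space $X_\tau$ with the $(t-t_0)^{1/2}$ factor on the $C^2$ seminorm is precisely what absorbs this initial singularity, and the Beta-integral computation is the right mechanism for recovering a small contraction constant. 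For the estimate~\eqref{ess} the paper simply invokes Proposition~\ref{prop7}; your Gronwall route is a legitimate alternative, though note that the Duhamel bound on the Lipschitz part actually carries a kernel factor $(t-\tau)^{-1/2}$, so the integral inequality should read $C\sigma\int_{t_0}^t(t-\tau)^{-1/2}M(\tau)\,d\tau$ rather than $C\sigma\int_{t_0}^t M(\tau)\,d\tau$---still fine via a singular Gronwall lemma or by absorption on short subintervals.
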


\begin{proof}
Notice that this proposition is equivalent to that problem \eqref{heatnonlineareq} has a unique solution $u$ for any $t_1\in (t_0, +\infty)$.
According to the definition in \eqref{demil},  $u$ is a mild solution of \eqref{heatnonlineareq} if and only if $u$ is a fixed point of the map $\mathcal{N}_{G, u_0}$. Thus we will study the existence and uniqueness of fixed point for the map $\mathcal{N}_{G, u_0}$.

Using Proposition \ref{prop7} and the assumptions on $G$,  we have that $\mathcal{N}_{G, u_0}$ is an operator from the space $L^\infty[(t_0, t_1); C^{2}(\R^{n})]$ to itself.
We claim that the map $\mathcal{N}_{G, u_0}$ is a contraction when $T_1:=t_1-t_0$ is small enough.

Indeed,  by Proposition \ref{prop6},  the assumptions on $G$ in (\ref{delpf}) and some direct computations,   we have that for any $u, w\in L^\infty[(t_0, t_1); C^{2}(\R^{n})]$,
\begin{align*}
&\big|\mathcal{N}_{G, u_0}[u](t, x)-\mathcal{N}_{G, u_0}[w](t, x)\big|
\\[2mm]
&\leq \int^t_{t_0}\int_{\R^n}\abs{p_n(t-\tau, x-y)}\cdot
\Big{|}G[\Delta u,  \nabla u,  u](\tau, y)-G[\Delta w,  \nabla w,  w](\tau, y)\Big{|}{\mathrm d}y{\mathrm d}\tau
\\[2mm]
&\leq \sigma T_1C_n\Big(\|\Delta u-\Delta w\|_{L^{\infty}((t_0, t_1)\times\R^n) }+\|\nabla u-\nabla w\|_{L^{\infty}((t_0, t_1)\times\R^n )}+\|u-w\|_{L^{\infty}((t_0, t_1)\times \R^n)}\Big),
\end{align*}
for a.e. $x\in\R^n$,  $t\in (t_0, t_1)$,  where $C_n$ is a positive number  which only depends on $n$.
Hence,  we can choose $T_1\leq\frac{1}{2\sigma C_n}$ such that
\begin{align*}
&\big\|\mathcal{N}_{G, u_0}[u](t, x)-\mathcal{N}_{G, u_0}[w](t, x)\big\|_{L^{\infty}((t_0, t_1)\times\R^n )}
\\[2mm]
& \leq \frac{1}{2}\Big(\|\Delta u-\Delta w\|_{L^{\infty}((t_0, t_1)\times\R^n )}+\|\nabla u-\nabla w\|_{L^{\infty}((t_0, t_1)\times\R^n )}+\|u-w\|_{L^{\infty}((t_0, t_1)\times\R^n )}\Big).
\end{align*}
Thus,  the map $\mathcal{N}_{G, u_0}$ is a contraction.

Next we consider the following iterative sequence $$u_m(t, x)=(\mathcal{N}_{G, u_0})^m[0](t, x), $$ by Banach Fixed Theorem,  then there is $u\in L^\infty[(t_0, t_1); C^{2}(\R^{n})]$ such that $u_m(x, t)$ converges to $ u(x, t)$ in the space $L^\infty[(t_0, t_1); C^{2}(\R^{n})]$ and $\mathcal{N}_{G, u_0}
(u)=u$,  which is the unique  solution of  problem \eqref{heatnonlineareq} with $t_1=t_0+\frac{1}{2\sigma C_n}$.

Let $u^1(t, x)$ denote the mild solution of \eqref{heatnonlineareq},  which can be obtained by the above arguments.
We consider the following initial problem:
\begin{equation*}
\left\{
\begin{array}{l}
u_t+(-\Delta)^2u=G[\Delta u, \nabla u, u, x,  t]  \hspace{1cm}\text{in}\ \big(t_0+\frac{1}{2\sigma C_n},  t_0+\frac{1}{\sigma C_n}\big)\times\R^n,
\\[2mm]
u\big(t_0+\frac{1}{2\sigma C_n},  x\big)=u^1\big(t_0+\frac{1}{2\sigma C_n},  x\big)\qquad \text{in}\ \R^n.
 \end{array}
\right.
\end{equation*}
Repeating the previous arguments,  we can get that the above equation has a unique solution $u^2(t, x)$.
Thus we can extend the time interval of existence to $\big(t_0, t_0+\frac{l}{2\sigma C_n}\big)$ with any positive integer $l\geq 1$ step by step.
In this process,  we used the fact that the initial  function is belonging to $C^1(\R^n)\cap L^\infty(\R^n)$ on each step.
Indeed,  the solutions are well-defined bounded continuous functions of $t$ and $x$,  and are continuously differentiable in $x$,
due to the estimate \eqref{es2} with $\vartheta=1$ in Proposition \ref{prop7}.
The estimate \eqref{ess} comes from Proposition \ref{prop7}.
\end{proof}

\subsection{A priori estimate of a linear problem involving the operator $L$}
Recall the functions $\rho$, $\gamma_n$ and $h$ in \eqref{drho1}-\eqref{assumeh}.
By defining
$$
\|\psi\|_{C_{\Phi}\left((t_1, t_2)\times(0, \infty)\right)}:=\left\| \frac{\psi}{\Phi}\right\|
_{L^\infty((t_1, t_2)\times(0, \infty))},
$$
we choose a set consisting of continuous functions as the following
\begin{align}
& C_{\Phi}((t_1, t_2)\times(0, \infty))
\nonumber\\[2mm]
&:=\left\{
\psi \ :\
\psi(t, r)=0, \ \forall\,  (t, r)\in(t_1, t_2)\times\left(0, \frac{\delta_0}{2}\right), \ \text{and}\ \|\psi\|_{C_{\Phi}((t_1, t_2)\times(0, \infty))}<+\infty
\right\},
\label{eq3.3}
\end{align}
where $t_1<t_2<0$,   $\delta_0>0$ is a small fixed number,  the functions $\Phi(t, r)$ and is given by (\ref{dpsi}).

Let us consider the following Cauchy problem:
\begin{equation}\label{eq3.5}
\left\{
\begin{array}{l}
\psi_t=L[\psi]+g(t, r)\qquad  \ \text{in}\ (s, -T]\times(0, \infty),
\\[2mm]
\psi(s, r)=0,\hspace{1.85cm}  \forall\, r\in (0, +\infty),
\\[2mm]
\partial^j_r\psi(t, 0)=0, \hspace{1.55cm} \forall\, t\in (s, -T]\ \text{and}\ j=1, 3,
 \end{array}
\right.
\end{equation}
where $g\in C_{\Phi}\left((-\infty, -T]\times(0, \infty)\right)$,  $T>0$ and $s+1<-T$.
Notice that problem \eqref{eq3.5} has a unique solution $\psi^s(t, r)$.
Indeed,  by the definition of the operator $L[\psi]$ in \eqref{eq2.13},  we can rewrite $L[\psi]$ as follows
\begin{align*}
L[\psi]=&-(\Delta)^2\psi+2W''\big(z(t, r)\big)\Delta \psi+2W'''\big(z(t, r)\big)\Delta z(t, r)\psi
+W^{(4)}\big(z(t, r)\big)\abs{\nabla z(t, r)}^2\psi
\\[2mm]
&+2W'''\big(z(t, r)\big)\nabla z(t, r)\cdot\nabla \psi
-\Big[\big|W''\big(z(t, r)\big)\big|^2-W'\big(z(t, r)\big)W'''\big(z(t, r)\big)\Big]\psi,
\end{align*}
where we recall that approximate solution
$$
z(t, r)=\left(\omega\big(r-\rho(t)\big)+\frac{(n-1)(n-3)}{r^2}  \widetilde{\omega}\big(r-\rho(t)\big)\right)\chi\big(r\big)+\chi\big(r\big)-1,
$$
 and the functions $\chi(r)$ and $\rho(t)$ are given by \eqref{dcut-off} and \eqref{drho1} respectively. According to Lemma \ref{lem1} and the assumptions on $W(s)$ in \eqref{eq1.4},
 those coefficients in front of $\Delta \psi$,  $\nabla\psi$ and $\psi$ are all smooth and bounded for $x\in\R^n$ and $t<-2$.
Thus we know that problem \eqref{eq3.5} is uniquely solvable by applying Proposition \ref{prop8}, and denote this solution by $\psi^s(t, r)$.
Moreover,  we have that $\psi^s(t, r)=0$ when $r\in(0,\frac{\delta_0}{2})$. Indeed, we consider
\begin{equation*}
\psi^{e}(t,r):=\left\{
\begin{array}{l}
\psi^s(t,r),\qquad \text{if}\ r\in(0,\frac{\delta_0}{2}]; \\[2mm]
0,\hspace{1.65cm} \text{if}\ r\in(\frac{\delta_0}{2},+\infty),
 \end{array}
\right.
\end{equation*}
which satisfies that
\begin{equation*}
\left\{
\begin{array}{l}
\psi_t=-(\Delta)^2\psi+4\Delta\psi-4\psi\qquad  \ \text{in}\ (s, -T]\times(0,+\infty),
\\[2mm]
\psi(s, r)=0,\hspace{3.1cm}  \forall\, r\in (0, +\infty),
\\[2mm]
\partial^j_r\psi(t, 0)=0, \hspace{2.83cm} \forall\, t\in (s, -T]\ \text{and}\ j=1, 3,
 \end{array}
\right.
\end{equation*}
where $n\geq4$ and we have used the fact that $z(t,r)=-1$ when $r\in(0,\frac{\delta_0}{2}]$.
According to the proposition \ref{prop8}, the  above equation has a unique mild solution $\psi(t,r)=0$.
Hence $\psi^e(t,r)=0$, that is $\psi^s(t, r)=0$ when $r\in(0,\frac{\delta_0}{2}]$.

We next establish a priori estimate for the solutions of problem (\ref{eq3.5}).
\begin{lem}\label{lem5}
Let $g\in C_{\Phi}((s, -T)\times(0, \infty))$ and $\psi^s$ be a solution of problem
$(\ref{eq3.5})$ which satisfies the orthogonality condition
\begin{equation}\label{eq3.6}
  \int_0^\infty \psi^s(t, r)\omega'\big(r-\rho(t)\big)r^{n-1}{\mathrm d}r=0,  \qquad s<t<-T.
\end{equation}
Then there exists a uniform constant $T_0>T$ such that for any $t\in (s, -T_0]$,  the following estimate is valid
\begin{equation}\label{eq3.7}
  \sum_{l=0}^3\|\partial_r^l\psi^s\|_{C_{\Phi}((s, t)\times(0, \infty))}\leq C\|g\|_{C_{\Phi}((s, t)\times(0, \infty))},
\end{equation}
where $C>0$ is a uniform positive constant independent of $t$ and $s$.
\end{lem}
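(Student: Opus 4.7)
The plan is to establish the estimate by a contradiction/blow-up argument, as alluded to in the third difficulty of the remark following Theorem \ref{thm4}, since the failure of the maximum principle for the fourth-order operator precludes a direct pointwise approach. Suppose the conclusion fails. Then there exist sequences $s_k<-T-1$, $t_k\in(s_k,-T]$, source terms $g_k\in C_\Phi$, and corresponding solutions $\psi_k$ to \eqref{eq3.5}--\eqref{eq3.6} with
\[
\sum_{l=0}^{3}\|\partial_r^l\psi_k\|_{C_\Phi((s_k,t_k)\times(0,\infty))}=1,\qquad \|g_k\|_{C_\Phi((s_k,t_k)\times(0,\infty))}\to 0.
\]
Pick points $(t_k^*,r_k^*)$ at which $\sum_{l=0}^3|\partial_r^l\psi_k(t_k^*,r_k^*)|/\Phi(t_k^*,r_k^*)\geq 1/2$. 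The first step is to rule out the compact regime: if $t_k^*$ were bounded or $t_k^*-s_k$ were bounded, the parabolic regularity of Proposition \ref{prop7} would yield, via Arzel\`a--Ascoli, a locally uniform limit solving a non-degenerate fourth-order parabolic problem with zero initial data and right-hand side; the unique solvability in Proposition \ref{prop8} would then force the limit to vanish, contradicting the choice of $(t_k^*,r_k^*)$. Taking $T_0$ large enough, we may therefore assume $t_k^*\to-\infty$, $t_k^*-s_k\to+\infty$, and $\gamma_n(t_k^*)\to+\infty$. Moreover $r_k^*\geq\delta_0/2$ since $\psi_k\equiv 0$ on $(0,\delta_0/2)$.

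Next I would shift to the moving interface. Set $y_k^*:=r_k^*-\rho(t_k^*)$ and rescale
\[
\tilde\psi_k(\tau,y):=\Phi(t_k^*,r_k^*)^{-1}\psi_k\big(t_k^*+\tau,\,\rho(t_k^*)+y\big).
\]
The main case is when $|y_k^*|$ remains bounded along a subsequence. Since $\rho(t_k^*)\to+\infty$, every coefficient in \eqref{eq2.13} carrying a factor $r^{-j}$ vanishes uniformly on bounded sets, while the remaining coefficients converge to evaluations at $\omega(y)$ by Lemma \ref{lem1} and the exponential smallness of $\widetilde\omega$ in \eqref{dwtd}. A direct computation, making use of $\omega''=W'(\omega)$, identifies the limit operator as the self-adjoint square $-\mathcal{L}^2$, where $\mathcal{L}:=-\partial_y^2+W''(\omega(y))$. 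Uniform $C^3$-in-$y$ bounds on $\tilde\psi_k$ from the normalization, combined with the H\"older regularity in Proposition \ref{prop7}, produce a subsequential limit $\tilde\psi_\infty$ which is a bounded ancient solution of
\[
\partial_\tau\tilde\psi_\infty=-\mathcal{L}^2\tilde\psi_\infty \quad \text{on }\R\times\R,
\]
satisfying the one-dimensional orthogonality $\int_\R\tilde\psi_\infty(\tau,y)\omega'(y)\,dy=0$ inherited from \eqref{eq3.6} after dividing by the factor $r^{n-1}\sim\rho(t_k^*)^{n-1}$; here the choice $p>n$ in the definition of $\Phi$ in \eqref{dpsi} provides the dominated-convergence control of the tails needed to pass the orthogonality to the limit.

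Now, the operator $\mathcal{L}$ has $\omega'$ as its positive ground state (eigenvalue $0$) with a positive spectral gap and essential spectrum $[\alpha^2,\infty)$, so $\mathcal{L}^2$ is non-negative with $\ker\mathcal{L}^2=\mathrm{span}\{\omega'\}$ (by the kernel lemma stated just before Section \ref{section2.2}) and a positive gap above $0$. Spectral decomposition of the ancient evolution then shows that the components of $\tilde\psi_\infty$ orthogonal to $\omega'$ must grow like $e^{-\lambda\tau}$ as $\tau\to-\infty$ for some $\lambda>0$, so boundedness forces them to be zero, while the kernel component vanishes by the orthogonality; hence $\tilde\psi_\infty\equiv 0$, contradicting the normalization at the limit point $(0,y_\infty^*)$. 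The remaining case $|y_k^*|\to\infty$ is handled analogously, with $W''(\omega(y_k^*+\cdot))\to\alpha^2$ and the limit operator $-(\partial_y^2-\alpha^2)^2$ whose strictly positive spectrum precludes any non-trivial bounded ancient solution. The principal obstacle is the first case: carefully identifying the limit operator and establishing the Liouville-type classification for bounded ancient solutions of $\partial_\tau\psi=-\mathcal{L}^2\psi$; both the polynomial-decay weight with $p>n$ and the uniform a priori derivative bounds from the definition of $C_\Phi$ are essential in order to keep the orthogonality condition meaningful after passing to the limit.
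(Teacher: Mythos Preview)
Your overall contradiction/blow-up structure matches the paper's, including the dichotomy between points near and far from the interface and the identification of the two limiting operators $-\mathcal{L}^2$ and $-(\partial_y^2-\alpha^2)^2$. The gap is in the near-interface case: after dividing by $\Phi(t_k^*,r_k^*)$ the normalization $\sum_l\|\partial_r^l\psi_k\|_{C_\Phi}=1$ only gives
\[
|\tilde\psi_k(\tau,y)|\le\frac{\Phi\big(t_k^*+\tau,\rho(t_k^*)+y\big)}{\Phi(t_k^*,r_k^*)}\le C(1+|y|)^p
\]
(this is exactly \eqref{eq3.15} in the paper), because the weight in \eqref{dpsi} is centered at the moving point $\gamma_n(t)+\tfrac{1}{4\alpha}\log|t|$, not at a fixed point. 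Hence $\tilde\psi_\infty$ is not a bounded ancient solution but one with polynomial growth in $y$, and the $L^2$-spectral decomposition of $\mathcal{L}^2$ you invoke is not directly available.

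The paper closes this gap by inserting a decay step before any Liouville argument: writing the rescaled equation in Duhamel form with the explicit kernel $\mathbf{Q}$ of $\partial_\nu+(\partial_y^2-\alpha^2)^2$ (see \eqref{11}--\eqref{f2}), and using that $W''(\hat z)-W''(1)$ is exponentially small while every $r^{-j}$-coefficient vanishes in the limit, one upgrades the a priori polynomial growth to the decay $|\widehat\psi(\nu,y)|\le C(1+|y|)^{-2}$, which is \eqref{eq3.18}. With $\widehat\psi(\nu,\cdot)\in L^2(\R)$ in hand, the paper then uses the energy identity
\[
\tfrac12\,\partial_\nu\!\int_\R\widehat\psi^{\,2}\,dy+\int_\R\big|\widehat\psi_{yy}-W''(\omega)\widehat\psi\big|^{2}\,dy=0
\]
together with the Poincar\'e-type inequality of Lemma~\ref{lem2}, obtaining exponential backward growth of $\int\widehat\psi^{\,2}$ and hence the contradiction. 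Your spectral heuristic can in principle be completed along these lines, but only after such a decay estimate places the limit in $L^2$; as written, the boundedness claim and the ensuing spectral step are unjustified. (A secondary structural difference: the paper first proves the near-interface vanishing as an Assertion, forcing the extremal points to satisfy $|\bar r_i-\gamma_n(\bar t_i)|\to\infty$, and only then treats the constant-coefficient limit --- the reverse of your ordering.)
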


The proofs of this lemma consist of tedious analysis,  which will be given in the sequel.

\subsubsection{Proof of Lemma \ref{lem5}} Here are the details.
\begin{proof}
We prove the above lemma by contradiction. Assume that there exist two sequences
$\{s_i\}$ and $\{t_i\}$ such that $s_i+1<t_i< 0$ and $s_i\rightarrow -\infty$,   $t_i\rightarrow -\infty$
when the sub-index $i$ goes to infinity.
We assume that: for each given $i$, there exists $g^{i}\in C_{\Phi}((s_i, t_i)\times(0, \infty))$,  such that
\begin{equation}\label{eq3.8}
  \begin{aligned}
 \sum_{l=0}^3\big\|\partial_r^l\psi^i\big\|_{C_{\Phi}((s_i, t_i)\times(0, \infty))}=1,
\end{aligned}
\end{equation}
and
\begin{equation}\label{eq8}
\|g^{i}\|_{C_{\Phi}((s_i, t_i)\times(0, \infty))}\rightarrow 0,
\qquad \text{as}\ i\rightarrow+\infty,
\end{equation}
where $\psi^i$ is the solution of problem (\ref{eq3.5})-(\ref{eq3.6}) with $g=g^i$,  $s=s_i$ and $-T=t_i$.

\textbf{Assertion}:
\emph{
For any $R>0$,  we have that
\begin{equation}\label{eq3.9}
\lim_{i\rightarrow\infty}\sum_{l=0}^3\left\|\frac{\partial^l_r\psi^i}{\Phi}\right\|_{L^\infty\big(A^{(s_i, t_i)}_{R}\big)}=0,
\end{equation}
where $\Phi(t, r)$ is given by $(\ref{eq2.13})$ and $A^{(s_i, t_i)}_{R}$ is defined by
$$
A^{(s_i, t_i)}_{R}:=\Big\{(t, r)\in (s_i, t_i)\times (0, \infty): \ \abs{r-\gamma_n(t)}<R+1\Big\}
$$
and the function $\gamma_n(t)$ is defined by \eqref{dgamma-n}.
}

The proof of (\ref{eq3.9}) will be provided in Section \ref{proofAssertion}.
We first accept the validity of (\ref{eq3.9}).
According to \eqref{eq3.8},  we have that there exists $l\in\{0, 1, 2, 3\}$ such that
 $$\big\|\partial_r^l\psi^i\big\|_{C_{\Phi}((s_i, t_i)\times(0, \infty))}\geq\frac{1}{4}.$$
Without loss of generality,  we assume that $\|\psi^i\|_{C_{\Phi}((s_i, t_i)\times(0, \infty))}\geq\frac{1}{4}$. For the other situations,  the following
arguments can be adapted in a simple way.

Recall the definition of $C_{\Phi}((s_i, t_i)\times(0, \infty))$ in \eqref{eq3.3},  then we derive that
\begin{equation*}
  \|\psi^i\|_{C_{\Phi}((s_i, t_i)\times(0, \infty))}=\sup_{(t, r)\in(s_i, t_i)\times(0, \infty)}\frac{\abs{\psi^i(t, r)}}{\Phi(t, r)}\geq\frac{1}{4}.
\end{equation*}
Thus,  there exists $(\bar{t}_i, \bar{r}_i)\in (s_i, t_i)\times(0, \infty)$ such that
\begin{equation*}
 \frac{\abs{\psi^i(\bar{t}_i, \bar{r}_i)}}{\Phi(\bar{t}_i, \bar{r}_i)}\geq\frac{1}{5}.
\end{equation*}
Furthermore,  by (\ref{eq3.9}),  we have
\begin{equation}\label{j1}
  \lim_{i\rightarrow +\infty}\abs{\bar{r}_i-\gamma_{n}(\bar{t}_i)}=+\infty.
\end{equation}

Let us define
\begin{equation*}
\phi^i(\nu, y):=\frac{\psi^i\big(\nu+\bar{t}_i, y+\bar{y}_i+\gamma_n(\nu+\bar{t}_i)\big)}
{\Phi\big(\bar{t}_i, \bar{y}_i+\gamma_{n}(\bar{t}_i)\big)}, \qquad\text{where}\ \bar{y}_i=\bar{r}_i-\gamma_{n}(\bar{t}_i),
\end{equation*}
where the functions $\gamma_n(t)$ and $\Phi(t,r)$ are given by \eqref{dgamma-n} and \eqref{dpsi} respectively. Thus $\phi^i(\nu, y)$ satisfies the following problem
\begin{align}
\partial_\nu\phi^i
=&
-\phi_{yyyy}^{i}
-\frac{2(n-1)}{y+\bar{y}_i+\gamma_{n}(\nu+\bar{t}_i)}\phi_{yyy}^i
+\Bigg[2W''\big(\bar{z}(\nu, y)\big)-\frac{(n-3)(n-1)}{
\big(y+\bar{y}_i+\gamma_{n}(\nu+\bar{t}_i)\big)^2}\Bigg]\phi^i_{yy}
\nonumber\\[3mm]
&-\Big(W''\big(\bar{z}(\nu, y)\big)\Big)^2\phi^i+\Bigg[\frac{2(n-1)W''\big(\bar{z}(\nu, y)\big)}{y+\bar{y}_i+\gamma_{n}(\nu+\bar{t}_i)}
-\frac{(3-n)(n-1)}{\big(y+\bar{y}_i+\gamma_{n}(\nu+\bar{t}_i)\big)^3}\Bigg]\phi^i_{y}
\nonumber\\[3mm]
&+2W'''\big(\bar{z}(\nu, y)\big)\bar{z}_y\phi^i_y-W'''\big(\bar{z}(\nu, y)\big)W'\big(\bar{z}(\nu, y)\big)\phi+W^{(4)}\big(\bar{z}(\nu, y)\big)\abs{\bar{z}_y}^2\phi^i
\nonumber\\[3mm]
&+2W'''\big(\bar{z}(\nu, y)\big)\bar{z}_{yy}\phi^i
+\frac{2(n-1)}{y+\bar{y}_i+\gamma_{n}(\nu+\bar{t}_i)}W'''\big(\bar{z}(\nu, y)\big)\bar{z}_y\phi^i
+\phi^i_y\partial_\nu\gamma_{n}(\nu+\bar{t}_i)
\nonumber\\[3mm]
&+\frac{g^i\big(\nu+\bar{t}_i, y+\bar{y}_i+\gamma_{n}(\nu+\bar{t}_i)\big)}
{\Phi\big(\bar{t}_i, \bar{y}_i+\gamma_{n}(\bar{t}_i)\big)}
\qquad \text{in}\ B^{(s_i, t_i)}_i,
\label{eqj.2}
\end{align}
with the conditions
\begin{align}
\abs{\phi^i(0, 0)}\geq\frac{1}{5}, \qquad
\phi^i(s_i-\bar{t}_i, y)=0,\qquad \text{for}\ y\in \big(-\bar{y}_i-\gamma_{n}(\nu+\bar{t}_i),  +\infty\big),
 \end{align}
where
$$
\bar{z}(\nu, y):=z\big(\nu+\bar{t}_i, y+\bar{y}_i+\gamma_{n}(\nu+\bar{t}_i)\big)
$$
and the set $B^{(s_i, t_i)}_i$ is defined by $$B^{(s_i, t_i)}_i:=(s_i-\bar{t}_i, 0]\times(-\bar{y}_i-\gamma_{n}(\nu+\bar{t}_i),  +\infty).$$

Since $\lim\limits_{i\rightarrow +\infty} \abs{\bar{y}_i}=+\infty$ due to \eqref{eq3.9},  we  have that $\lim\limits_{i\rightarrow +\infty} \bar{y}_i=+\infty$ or $-\infty$ up to a subsequence.
Moreover,  notice that there exist two cases:

\noindent\textbf{Case $1$},  $\lim\limits_{i\rightarrow\infty}s_i-\bar{t}_i=-\infty$.

\noindent\textbf{Case $2$},  $\lim\limits_{i\rightarrow\infty}s_i-\bar{t}_i=-\varsigma$ for some positive constant $\varsigma>0$.
\\
Next we will prove that Lemma \ref{lem5} holds,  and the process will be
divided into the following two steps.

\textbf{Step one.}
We first consider \textbf{Case $1$},  and then have that $\phi^i\rightarrow\phi$ locally uniformly. Moreover,  $\abs{\phi(0, 0)}>\frac{1}{5}$ and $\phi$ satisfies
\begin{equation}\label{limeq}
\phi_{\nu}=-\phi_{yyyy}+2W''(1)\phi_{yy}-\big[W''(1)\big]^2\phi\qquad  \text{in}\ (-\infty, 0]\times\R.
\end{equation}

By the assumption on $\psi^i$ in \eqref{eq3.8},  we have that for all $(\nu, y)\in B^j_i$,
 \begin{equation}\label{esp0}
 \abs{\phi^i(\nu, y)}=\abs{\frac{\psi^i\big(\nu+\bar{t}_i, y+\bar{y}_i+\gamma_{n}(\nu+\bar{t}_i)\big)}
 {\Phi\big(\bar{t}_i, \bar{y}_i+\gamma_{n}(\bar{t}_i)\big)}}
\leq
\abs{\frac{\Phi\big(\nu+\bar{t}_i, y+\bar{y}_i+\gamma_{n}(\nu+\bar{t}_i)\big)}
{\Phi\big(\bar{t}_i, \bar{y}_i+\gamma_{n}(\bar{t}_i)\big)}},
 \end{equation}
 where the sets $B_i^j$ with  $j=0, 1$,  are defined by
\begin{equation}\begin{aligned}\label{dB}
B_i^0:&=(s_i-\bar{t}_i, 0]\times\big(-\bar{y}_i-\gamma_{n}(\nu+\bar{t}_i), \
\delta_0-\gamma_n(\nu+\bar{t}_i)-\bar{y}_i\big],
\\[2mm]
 B_i^1:&=(s_i-\bar{t}_i, 0]\times\big(\delta_0-\gamma_n(\nu+\bar{t}_i)-\bar{y}_i, \
+\infty\big).
\end{aligned}\end{equation}

Next we estimate the right hand side of the above inequality \eqref{esp0}.
There exist the following different situations:
\\
\noindent{\bf (1).}
When $j=1$ and $\bar{r}_i=\bar{y}_i+\gamma_{n}(\bar{t}_i)>\delta_0$,  by the definitions of $\gamma_n(t)$ and $\Phi(t, r)$ in \eqref{dgamma-n} and \eqref{dpsi},  we have that
\begin{align*}
\abs{\frac{\Phi\big(\nu+\bar{t}_i, y+\bar{y}_i+\gamma_{n}(\nu+\bar{t}_i)\big)}
{\Phi\big(\bar{t}_i, \bar{y}_i+\gamma_{n}(\bar{t}_i)\big)}}
&\leq \frac{\hbar(\nu+\bar{t}_i)}{\hbar(\bar{t}_i)}
\left(\frac{1+\abs{\bar{y}_i-\frac{1}{3\alpha}\log\abs{\bar{t}_i}}}
{1+\abs{y+\bar{y}_i-\frac{1}{3\alpha}\log\abs{\nu+\bar{t}_i}}}\right)^p
\\[2mm]
&\leq  C \frac{\hbar(\nu+\bar{t}_i)}{\hbar(\bar{t}_i)} \Big(1+\abs{y}+\log\Big[\frac{\abs{\nu+\bar{t}_i}}{\abs{\bar{t}_i}}\Big]\Big)^p
\\[2mm]
&\leq C\left(\abs{y}+1\right)^p,
\end{align*}
where the function $\hbar(t)$ is defined by
\begin{equation}\label{dehat}
  \hbar(t):=\abs{t}^{-\frac{1}{2}}\log\abs{t},
\end{equation}
and $C$ is a positive constant which depends on $n$ and $\alpha$.

\noindent{\bf (2).}
When $j=1$ and $\frac{\delta_0}{2}<\bar{r}_i\leq\delta_0$,  by the same argument as above,  we have that
 \begin{equation*}
\begin{aligned}
&\abs{\frac{\Phi\big(\nu+\bar{t}_i, y+\bar{y}_i+\gamma_n(\nu+\bar{t}_i)\big)}
{\Phi\big(\bar{t}_i, \bar{y}_i+\gamma_n(\bar{t}_i)\big)}}\leq C.
\end{aligned}
\end{equation*}
\\
\noindent{\bf (3).}
When $j=0$,  by the definitions of $\gamma_n(t)$ and $\Phi(t, r)$ in \eqref{dgamma-n} and \eqref{dpsi},  we have that
\begin{equation*}
\begin{aligned}
\abs{\frac{\Phi\big(\nu+\bar{t}_i, y+\bar{y}_i+\gamma_{n}(\nu+\bar{t}_i)\big)}
{\Phi\big(\bar{t}_i, \bar{y}_i+\gamma_{n}(\bar{t}_i)\big)}}
&\leq \frac{\hbar(\nu+\bar{t}_i)}{\hbar(\bar{t}_i)}
\left(1+\abs{\bar{y}_i-\frac{\alpha}{3}\log\abs{\bar{t}_i}}\right)^p
\\[2mm]
&\leq C(1+\abs{y})^p,
\end{aligned}
\end{equation*}
where we have used the fact that $\abs{\bar{y}_i-\frac{\alpha}{4}\log\abs{t_i}}\leq \abs{y}$ since $(\nu, y)\in B_i^0$ given by \eqref{dB}.

Combining above estimates,  we derive that
\begin{equation}\label{esp}
  \abs{\phi^i(\nu, y)}\leq C(\abs{y}+1)^p,
\end{equation}
for all $(\nu, y)\in B_i^j$ with $j=0, 1$,  where $p\in(n, n+1)$ and $C$ is a positive constant only depending on $\alpha$ and $n$.
Similarly,  we have that
\begin{equation}\label{esp1}
   \abs{\phi^i_y(\nu, y)}\leq C(\abs{y}+1)^p, \quad \abs{\phi^i_{yy}(\nu, y)}\leq C(\abs{y}+1)^p\quad \text{and}\quad \abs{\phi^i_{yyy}(\nu, y)}\leq C(\abs{y}+1)^p,
\end{equation}
for all $(\nu, y)\in B_i^j$ and $C$ is a positive constant which only depends on $\alpha$ and $n$.
Notice that
$$
\cup_{j=0}^1B_i^j=\big(s_i-\bar{t}_i, 0\big]\times\big(-\bar{y}_i-\gamma_n(\nu+\bar{t}_i),\,   +\infty\big)
$$
due to \eqref{dB}. Owing to \eqref{esp} and \eqref{esp1},  we have that
$$
\phi^i\rightarrow \phi\qquad \mbox{in}\ L_{\text{loc}}^2\big[(-\bar{\varsigma}, 0); H^2_{\text{loc}}(\R)\big],
$$
where $\bar{\varsigma}=\varsigma$ or $\infty$. Moreover,  we note that there holds that
$$
\lim\limits_{i\rightarrow+\infty}-\gamma_n(\nu+\bar{t}_i)-\bar{y}_i=M,
$$
up to a subsequence,  where $M=-\infty$ or $M\in(-\infty, 0]$.
For the second case,  that is $M\in(-\infty, 0]$,  we can get that $\phi$ satisfies that
 \begin{equation*}
\phi_{\nu}=-\phi_{yyyy}+2W''(1)\phi_{yy}-\big[W''(1)\big]^2\phi\qquad  \text{in}\ (-\infty, 0]\times[0, +\infty).
\end{equation*}
Then,  we can consider the function
\begin{equation*}
  \widetilde{\phi}(\nu, x):=\phi(\nu, x)\quad \text{when}\ x\geq0,  \quad \text{and} \quad \widetilde{\phi}(\nu, x):=\phi(\nu, -x)\quad \text{when}\ x<0,
\end{equation*}
which is the even extension of $\phi$ with respect to $x$. It is easy to check that $ \widetilde{\phi}$ satisfies equation \eqref{limeq}.

For \textbf{Case $2$}: $\lim\limits_{i\rightarrow\infty}s_i-\bar{t}_i=-\varsigma$ for some  constant $\varsigma>0$,
we have $\phi^i\rightarrow\phi$ locally uniformly,  and $\phi$ satisfies
\begin{equation*}
\left\{
\begin{array}{l}
\phi_{\nu}=-\phi_{yyyy}+2W''(1)\phi_{yy}-\big[W''(1)\big]^2\phi\qquad \text{in}\ (-\varsigma, 0]\times\R,
\\[3mm]
\phi(-\varsigma, y)=0, \qquad \text{for all}\ y\in\R.
\end{array}
\right.
\end{equation*}
By Proposition \ref{prop8},  the above equation has a unique solution $\phi\equiv0$.
However,  we have that
$$
\abs{\phi(0, 0)}=\lim\limits_{i\rightarrow\infty}\abs{\phi^i(0, 0)}\geq\frac{1}{5}
$$
in \eqref{eqj.2},  which leads to a contradiction.
Hence the \textbf{Case $2$} cannot happen.

\textbf{Step two.}  We claim that
\begin{equation}\label{des}
\phi(\nu, y)\equiv0,
\end{equation}
for all $(\nu, y)\in(-\infty, 0]\times\R$. This result contradicts with $\abs{\phi(0, 0)}>\frac{1}{5}$,  thus we can derive that Lemma \ref{lem5} holds.

In the rest part of this proof, the job is to show that the above conclusion in \eqref{des} holds.
Recall that $\alpha=\sqrt{W''(1)}>0$,  we then consider the following parabolic equation
\begin{equation*}
  u_\nu=-u_{yyyy}+2\alpha^2 u_{yy}, \qquad \forall\,  (\nu, y)\in (0, +\infty)\times\R.
\end{equation*}
Using the Fourier transformation for $y$,  the above equation has a formula solution
\begin{equation}\label{11}
\mathbf{Q}(\nu, y)=\frac{1}{2\pi}\int_\R \exp\Big\{-\nu\big(\abs{\xi}^4+2\alpha^2\abs{\xi}^2\big)\Big\}e^{i\xi y}{\mathrm d}\xi.
\end{equation}
Hence,  for any $T>0$ and $f\in L^\infty((-T, 0)\times\R)$,  the following initial value problem
\begin{equation*}
  u_\nu=-u_{yyyy}+2\alpha^2 u_{yy}-\alpha^4u+f(\nu, y), \qquad  (\nu, y)\in (-T, 0)\times\R,  \qquad u(-T, y)=0,  \qquad \forall\, y\in\R,
\end{equation*}
has a solution of the form
\begin{equation}\label{f1}
u(\nu, y)=\frac{1}{2\pi}\int_{0}^{\nu-T_1}\int_{\R}e^{-\alpha^4\tau}\mathbf{Q}(\tau, x)f(\nu-\tau, y-x){\mathrm d}x{\mathrm d}\tau,  \qquad \forall\, (\nu, y)\in(-T, 0)\times\R.
\end{equation}

According to formula \eqref{f1} and equation \eqref{eqj.2},  we have that
\begin{equation}\label{dfs}
\phi^i(\nu, y)=\int_{0}^{\nu-s_i+\bar{t}_i}\int_{\R}e^{-\alpha^4\tau}\mathbf{Q}(\tau, x)f_i\big(\phi^i(\nu-\tau, y-x),\, \nu-\tau,  y-x\big){\mathrm d}x{\mathrm d}\tau,
\end{equation}
 for all $(\nu, y)\in(s_i-\bar{t}_i, 0)\times\R$,  where the function $f_i(\phi^i, \nu, y)$ is defined as follows
\begin{align*}
f_i(\phi^i, \nu, y):=&-\frac{2(n-1)}{y+\bar{y}_i+\gamma_{n}(\nu+\bar{t}_i)}\phi_{yyy}^i-\frac{(n-3)(n-1)}{
\big(y+\bar{y}_i+\gamma_{n}(\nu+\bar{t}_i)\big)^2}\phi^i_{yy}
\\[2mm]
&+\Bigg[\frac{2(n-1)W''\big(\bar{z}(\nu, y)\big)}{y+\bar{y}_i+\gamma_{n}(\nu+\bar{t}_i)}
-\frac{(3-n)(n-1)}{\big(y+\bar{y}_i+\gamma_{n}(\nu+\bar{t}_i)\big)^3}\Bigg]\phi^i_{y}
\\[2mm]
&+2W'''\big(\bar{z}(\nu, y)\big)\bar{z}_y\phi^i_y-W'''\big(\bar{z}(\nu, y)\big)W'\big(\bar{z}(\nu, y)\big)\phi+W^{(4)}\big(\bar{z}(\nu, y)\big)\abs{\bar{z}_y}^2\phi^i
\\[2mm]
&+2W'''\big(\bar{z}(\nu, y)\big)\bar{z}_{yy}\phi^i+2\frac{n-1}{y+\bar{y}_i+\gamma_{n}(\nu+\bar{t}_i)}W'''\big(\bar{z}(\nu, y)\big)\bar{z}_y\phi^i
+\phi^i_y\partial_\nu\gamma_{n}(\nu+\bar{t}_i)
\\[2mm]
&+\frac{g^i\big(\nu+\bar{t}_i, y+\bar{y}_i+\gamma_{n}(\nu+\bar{t}_i)\big)}
{\Phi\big(\bar{t}_i, \bar{y}_i+\gamma_{n}(\bar{t}_i)\big)}
\\[2mm]
&+2\Big[W''\big(\bar{z}(\nu, y)\big)-W''(1)\Big]\phi^i_{yy}
-\Big(\big[W''\big(\bar{z}(\nu, y)\big)\big]^2-\big[W''(1)\big]^2\Big)\phi^i,
\end{align*}
where $\bar{z}(\nu, y)=z\big(\nu+\bar{t}_i,  y+\bar{y}_i+\gamma_{n}(\nu+\bar{t}_i)\big)$,  the function $z(\nu, r)$ is given by \eqref{dz}.

Using \eqref{eq3.3},  \eqref{eq8},  \eqref{esp},  \eqref{esp1},  the same arguments in proof of Lemma \ref{lem2},  we have that
\begin{align*}
\abs{f_i\big(\phi^i(\nu, y), \nu, y\big)}\leq& C(1+\abs{y})^p\Bigg[\exp\big\{-\alpha\abs{y+\bar{y}_i}\big\}
  +\|g^{i}\|_{C_{\Phi}((s_i, t_i)\times(0,+\infty))}
  \\[2mm]
&\qquad\qquad+\frac{1}{\left[-(\nu+\bar{t}_i)\right]^{3/4}}
+\sum_{l=1}^3\big(y+\bar{y}_i+\gamma_{n}(\nu+\bar{t}_i)\big)^{-l}\Bigg]
\overline{\chi}_{\left\{y>\frac{\delta_0}{2}-\gamma_n(\nu+\bar{t}_i)-\bar{y}_i\right\}},
\end{align*}
for all $(\nu, y)\in (s_i-\bar{t}_i, 0]\times(-\bar{y}_i-\gamma_n(\nu+\bar{t}_i), +\infty)$,  where $C$ depends on $\alpha$ and $n$,  and $\overline{\chi}_{A}$ denotes the characteristic function of the set $A$.

 Moreover a straightforward shift of contour gives that there exists a positive constant $C$ such that
$$\abs{\mathbf{Q}(\nu, y)}\leq C \nu^{-\frac{1}{4}}\exp\left\{-\frac{\abs{y}}{\nu^{1/4}}\right\}, $$
or see $(3.2)$ in \cite{BKT}.
Thus,  using \eqref{dfs} and the above inequality,  we have that
\begin{align}\label{13}
\abs{\phi^i(\nu, x)}&\leq C\int_0^{\nu-s_i+\bar{t}_i}\int^{+\infty}_{\frac{\delta_0}{2}-\bar{y}_i-\gamma_{n}(\nu+\bar{t}_i)}\tau^{-\frac{1}{4}}
\exp\Bigg\{-\frac{\abs{x-y}}{\tau^{1/4}}-\alpha^4\tau\Bigg\}
\Bigg[\exp\big\{-\alpha\abs{y+\bar{y}_i}\big\}
  \\[2mm]
&\qquad\quad+\frac{1}{\left[-(\nu+\bar{t}_i-\tau)\right]^{3/4}}
+\|g^{i}\|_{C_{\Phi}((s_i, t_i)\times\R)}
+\sum_{l=1}^3\big[y+\bar{y}_i+\gamma_{n}(\nu+\bar{t}_i)\big]^{-l}\Bigg]\big(1+\abs{y}\big)^p
{\mathrm d}y {\mathrm d}\tau.
\nonumber
\end{align}
Thus, by  Lebesgue dominated convergence theorem, the facts
$$
\abs{\bar{y}_i} \rightarrow \infty,
\qquad
\bar{t}_i\rightarrow-\infty
\qquad\mbox{and}\qquad
\|g^{i}\|_{C_{\Phi}((s_i, t_i)\times(0,+\infty))}\rightarrow 0,
$$
when $i$ goes to infinity in \eqref{eq8},  we can obtain \eqref{des}.
\end{proof}

\subsubsection{Proof of \textbf{Assertion}}\label{proofAssertion}

We will prove (\ref{eq3.9}) by contradiction arguments in the following five steps.
We assume that (\ref{eq3.9}) is not valid. Then there exists a sequence $\{i_m\in \mathbb{N}\}$ with $\lim\limits_{m\rightarrow \infty} i_m=\infty$ and $R>0$ such that there exists $l\in\{0,  1, 2, 3\}$ satisfying
\begin{equation*}
 \left\|\frac{\partial^l_r\psi^{i_m}}{\Phi}\right\|_{L^\infty\left(A^{\left(s_{i_m}, t_{i_m}\right)}_{R}\right)}>0.
\end{equation*}
Without loss of the generality,  we assume that
$$
\Big\|\frac{\psi^{i_m}}{\Phi}\Big\|_{L^\infty\left( A^{\left(s_{i_m}, t_{i_m}\right)}_{R}\right)}>0.
$$
For the other situations,
the following arguments are similar.

Let $(\hat{t}_{i_m},  r_{i_m})\in A^{(s_{i_m}, t_{i_m})}_{R}=\big\{(t, r)\in (s_{i_m}, t_{i_m})\times \R: \abs{r-\gamma_{n}(t)}<R+1\big\}$ such that there exist $\delta>0$ and
\begin{equation}\label{eq3.12}
\abs{\frac{\psi^{i_m}(\hat{t}_{i_m}, r_{i_m})}{\Phi(\hat{t}_{i_m}, r_{i_m})}}>\delta>0.
\end{equation}
Let us introduce the following change of variables
\begin{equation*}
\nu=t-\hat{t}_{i_m},\quad  y=r-y_{i_m}-\rho(t+\hat{t}_{i_m})\quad \text{and}\quad y_{i_m}=r_{i_m}-\gamma_{n}(\hat{t}_{i_m}),
\end{equation*}
and set
\begin{equation}\label{eq3.14}
  \widehat{\psi}^{i_m}(\nu, y):=\frac{\psi_{i_m}\big(\nu+\hat{t}_{i_m},\, y+y_{i_m}+\rho(\nu+\hat{t}_{i_m})\big)\,}
  {\Phi\big(\hat{t}_{i_m}, y_{i_m} +\rho(\hat{t}_{i_m})\big)},
\end{equation}
where we recall that $\rho(t)=\gamma_n(t)+h(t)$ defined in \eqref{drho1}. Hence,  there exists $y_0$ such that
$$
\lim\limits_{m\rightarrow\infty}y_{i_m}=y_0
\quad \mbox{and}\quad
\abs{y_0}<R+2.
$$
According to (\ref{eq3.5}),  we have that $\widehat{\psi}^{i_m}$ satisfies the following problem
\begin{align}
\widehat{\psi}^{i_m}_\nu
=&-\widehat{\psi}^{i_m}_{yyyy}
-\frac{2(n-1)}{y+y_{i_m}+\rho(\nu+\hat{t}_{i_m})}\widehat{\psi}^{i_m}_{yyy}
+\Bigg[ 2W''\big(\hat{z}(\nu, y)\big)-\frac{(n-3)(n-1)}{
\big(y+y_{i_m}+\rho(\nu+\hat{t}_{i_m})\big)^2}\Bigg] \widehat{\psi}^{i_m}_{yy}
\nonumber\\[2mm]
&-\Big(W''\big(\hat{z}(\nu, y)\big)\Big)^2\widehat{\psi}^{i_m}
+\Bigg[\frac{2(n-1)W''\big(\hat{z}(\nu, y)\big)}{y+y_{i_m}+\rho(\nu+\hat{t}_{i_m})}-\frac{(3-n)(n-1)}{\big(y+y_{i_m}+\rho(\nu+\hat{t}_{i_m})\big)^3}\Bigg]\widehat{\psi}^{i_m}_{y}
\nonumber\\[2mm]
&+2W'''\big(\hat{z}(\nu, y)\big)\hat{z}_{yy}\widehat{\psi}^{i_m}
+2\frac{n-1}{y+y_{i_m}+\rho(\nu+\hat{t}_{i_m})}W'''\big(\hat{z}(\nu, y)\big)\hat{z}_y\widehat{\psi}^{i_m}
\nonumber\\[2mm]
&+2W'''\big(\hat{z}(\nu, y)\big)\hat{z}_y\widehat{\psi}^{i_m}_y
-W'''\big(\hat{z}(\nu, y)\big)W'\big(\hat{z}(\nu, y)\big)\widehat{\psi}^{i_m}
+W^{(4)}\big(\hat{z}(\nu, y)\big)\abs{\hat{z}_y}^2
\widehat{\psi}^{i_m}
\nonumber\\[2mm]
&+\frac{g^{i_m}\big(\nu+\hat{t}_{i_m}, \,  y+y_{i_m}+\rho(\nu+\hat{t}_{i_m})\big)}
{\Phi\big(\hat{t}_{i_m}, y_{i_m}+\rho(\hat{t}_{i_m})\big)}
-\partial_\nu \rho(\nu+\hat{t}_{i_m})\widehat{\psi}^{i_m}_{y}
\qquad \text{in}\ \Gamma^{s_{i_m}, t_{i_m}},
\label{eq3.10}
\end{align}
with the conditions
\begin{align}
\abs{\widehat{\psi}^{i_m}(0, 0)}\geq\delta, \quad
\widehat{\psi}^{i_m}(s_{i_m}-\hat{t}_{i_m}, y)=0, \quad  y\in\big(-y_{i_m}-\rho(\nu+\hat{t}_{i_m}),  +\infty\big),
\label{eq3.10-111}
\end{align}
where
$$
\hat{z}(\nu, y)=z\big(\nu+\hat{t}_{i_m}, y+y_{i_m}+\rho(\nu+\hat{t}_{i_m})\big),
$$
and
$$
\Gamma^{s_{i_m}, t_{i_m}}:=\Big\{(\nu, y)\in \big(s_{i_m}-\hat{t}_{i_m},\, 0\big]\times \big(-y_{i_m}-\rho(\nu+\hat{t}_{i_m}),\, +\infty\big)\Big\}.
$$
As the same as previous part,
$\lim\limits_{m\rightarrow\infty}(s_{i_m}-\hat{t}_{i_m})=\hat{\varsigma}$,  where $\hat{\varsigma}$ equal to $-\infty$ or a negative number.

\textbf{Step} $\mathbf{1}$.
We first consider the case: $\lim\limits_{m\rightarrow\infty}(s_{i_m}-\hat{t}_{i_m})=-\infty$.
We have that $\widehat{\psi}^{i_m}\rightarrow\widehat{\psi}$ locally uniformly,  $\abs{\widehat{\psi}(0, 0)}>\delta>0$,  which $\widehat{\psi}$ satisfies the following equation
\begin{equation}\label{eq3.13}
\begin{aligned}
\widehat{\psi}_\nu=&-\widehat{\psi}_{yyyy}+2W''\big(\omega(y+y_0)\big)\widehat{\psi}_{yy}
+2W'''\big(\omega(y+y_0)\big)\omega'(y+y_0)\widehat{\psi}_y
-\Big[W''\big(\omega(y+y_0)\big)\Big]^2\widehat{\psi}
\\[3mm]
&+\Big[W''''\big(\omega(y+y_0)\big)\big(\omega'(y+y_0)\big)^2
+W'''\big(\omega(y+y_0)\big)\omega''(y+y_0)\Big]\widehat{\psi}, \qquad \text{in} \ (-\infty, 0]\times\R.
\end{aligned}
\end{equation}
According to (\ref{dpsi}),  (\ref{eq3.8}) and (\ref{eq3.14}),  by the definition of $\rho$ in \eqref{drho1},  similar as \eqref{esp},  we have
\begin{align}\label{eq3.15}
\nonumber\abs{\widehat{\psi}^{i_m}(\nu, y)}=&\abs{\frac{\psi_{i_m}\big(\nu+\hat{t}_{i_m}, y+y_{i_m}+\rho(\nu+\hat{t}_{i_m})\big)}
{\Phi\big(\hat{t}_{i_m}, y_{i_m}+\rho(\hat{t}_{i_m})\big)}}
\\[2mm]\nonumber
\leq&\abs{\frac{\Phi\big(\nu+\hat{t}_{i_m}, y+y_{i_m}+\rho(\nu+\hat{t}_{i_m})\big)}
{\Phi\big(\hat{t}_{i_m}, y_{i_m}+\rho(\hat{t}_{i_m})\big)}}
\\[2mm]
\leq &C\big(\alpha, n,  R, \|h\|_{L^\infty}\big) \big(1+\abs{y}\big)^p, \qquad
\forall\, (\nu, y)\in B_{\hat{t}_{i_m}, j},
\end{align}
for all $j=0, 1$,  where $\rho(t)=\gamma_n(t)+h(t)$ in \eqref{drho1} and the sets $B_{\hat{t}_{i_m}, j}$ are defined as the following
\begin{align*}
   B_{\hat{t}_{i_m}, 0}&:=\Big\{(\nu, y)\in (s_{i_m}-\hat{t}_{i_m}, 0]\times\R: 0 < y+y_{i_m}+\rho(\nu+\hat{t}_{i_m})\leq \delta_0\Big\},
\\[2mm]
  B_{\hat{t}_{i_m}, 1}&:=\Big\{(\nu, y)\in (s_{i_m}-\hat{t}_{i_m}, 0]\times\R: \delta_0 \leq y+y_{i_m}+\rho(\nu+\hat{t}_{i_m})<+\infty\Big\}.
\end{align*}
 We notice that
\begin{equation}\label{eq3.17}
\Gamma^{s_{i_m}, t_{i_m}}=\cup_{l=0}^{1}B_{\hat{t}_{i_m}, j}
=\big(s_{i_m}-\hat{t}_{i_m}, 0\big]\times\big(-y_{i_m}-\rho(\nu+\hat{t}_{i_m}),\, +\infty\big).
\end{equation}
Similarly,  we have that
\begin{equation}\label{f4}
 \sum_{l=1}^3\abs{\partial_y^l\widehat{\psi}^{i_m}(\nu, y)}\leq C (1+\abs{y})^p,
\end{equation}
 for all $(\nu, y)\in \Gamma^{s_{i_m}, t_{i_m}}$,  where $p\in(n, n+1)$ and $C$ depends on $ \alpha,  R$,  $n$ and $\|h\|_{L^\infty}$.

Since $\gamma_{n}(\nu+\hat{t}_{i_m})\rightarrow+\infty$ as $i$ goes to infinity,  by (\ref{f4}),  (\ref{eq3.10})-\eqref{eq3.10-111},  and $y_{i_m}\rightarrow y_0$,  we can get the limiting equation in \eqref{eq3.13}.
If $\hat{\varsigma}\in (-\infty,  0)$,  we have $\hat{\psi}(\hat{\varsigma}, y)=0$.
According to Proposition \ref{prop8},  we have that equation \eqref{eq3.13} has a unique solution $\hat{\psi}(\nu, y)\equiv0$,  which contradicts with $\abs{\hat{\psi}(0, 0)}>0$. Hence,  it only happens that $\hat{\varsigma}=-\infty$.
Thus,  we get the desired result.

\textbf{Step} $\mathbf{2}$.
We will prove the following orthogonality condition for $\widehat{\psi}$
\begin{equation}\label{eq3.16}
  \int_{\R}\widehat{\psi}(\nu, y)\omega'(y+y_0){\mathrm d}y=0, \qquad \text{for all}\ \nu\in(-\infty, 0].
\end{equation}
In fact,  according to (\ref{eq3.6}) and (\ref{eq3.14}),  we get that
\begin{align*}
  0&= \frac{\left[y_{i_m}+\rho(\nu+\hat{t}_{i_m})\right]^{1-n}}
  {\Phi\big(\hat{t}_{i_m}, y_{i_m}+\rho(\hat{t}_{i_m})\big)}
  \int_0^\infty \psi^{i_m}(\nu+\hat{t}_{i_m}, r)\omega'\big(r-\rho(\nu+\hat{t}_{i_m})\big)r^{n-1}{\mathrm d}r
 \\[2mm]
&=\int_{-y_{i_m}-\rho(\nu+\hat{t}_{i_m})}^\infty\widehat{\psi}^{i_m}(\nu, y)\omega'(y+y_{i_m})
  \left[\frac{y+y_{i_m}+\rho(\nu+\hat{t}_{i_m})}
  {y_{i_m}+\rho(\nu+\hat{t}_{i_m})}\right]^{n-1}{\mathrm d}y,
\end{align*}
where $\rho(t)=\gamma_n(t)+h(t)$ with $\|h(t)\|_{L^\infty}<1$ and the function $\omega$ is given by \eqref{eqq}.
And using  (\ref{eq3.15}),  (\ref{eq3.17}) and Lemma \ref{lem1},  we also get that
\begin{align*}
&\abs{\widehat{\psi}_{i_m}(\nu, y)\omega'(y+y_{i_m})}\leq C(R,  n,  \alpha,  \|h\|_{L^\infty})(1+\abs{y})^p
\exp\big\{-\alpha\abs{y+y_{i_m}}\big\}.
\end{align*}
Since $y_{i_m}\rightarrow y_0$ and $\hat{t}_{i_m}\rightarrow-\infty$,  as $m$ goes to infinity,
using Lebesgue Dominated Convergence Theorem,  we can  obtain that (\ref{eq3.16}) holds.

\textbf{Step} $\mathbf{3}$.
In this step,  we will prove the following  decay of $\widehat{\psi}(\nu, y)$:
there exists a constant $C$ of the form
$$
C=C\big(\alpha, n,  R, \|h\|_{L^\infty}\big)>0
$$
such that
\begin{equation}\label{eq3.18}
  \abs{\widehat{\psi}(\nu, y)}\leq \frac{C}{\left(1+\abs{y}\right)^2},  \qquad \forall\, (\nu, y)\in (-\infty, 0]\times\R.
\end{equation}

In fact,  for any $(\nu, y)\in B_{t_{i_m}, j}$,  by the definition of $\rho(t)$ in \eqref{drho1},   in view of the  proof of (\ref{eq3.15}),  we have
\begin{equation}\label{f3}
  \abs{\frac{g^{i_m}\big(\nu+\hat{t}_{i_m}, y+y_{i_m}+\rho(\nu+\hat{t}_{i_m})\big)}
{\Phi\big(\hat{t}_{i_m}, y_{i_m}+\rho(\hat{t}_{i_m})\big)}} \leq C \|g^{i_m}\|_{C_{\Phi}((s_{i_m},\, t_{i_m})\times(0,+\infty))}(1+\abs{y})^p, \
\end{equation}
for all $(\nu, y)\in \Gamma^{s_{i_m}, t_{i_m}}$ given by \eqref{eq3.17},  where $C$ depends on $\alpha, R,  n$ and $\|h\|_{L^\infty}$.

By formula \eqref{f1},  the solution of equation \eqref{eq3.10} has the form
\begin{equation}\label{f2}
  \widehat{ \psi}_{i_m}(\nu, y)=\int_{0}^{\nu-s_{i_m}+\hat{t}_{i_m}}\int_{\R}e^{-\alpha^4\tau}
  \mathbf{Q}(\tau, x)\hat{f}_{i_m}\big(\hat{\psi}^{i_m}(\nu-\tau, y-x), \nu-\tau, y-x\big){\mathrm d}x{\mathrm d}\tau,
\end{equation}
for all $(\nu, y)\in(s_{i_m}-\hat{t}_{i_m}, 0)\times\R$,  where $\alpha=\sqrt{W''(1)}$ and the function $\hat{f}_{i_m}$ is given by
\begin{align*}
\hat{f}_{i_m}=&2\Big[W''\big(\hat{z}(\nu, y)\big)-W''(1)\Big]\widehat{\psi}^{i_m}_{yy}
+2W'''\big(\hat{z}(\nu, y)\big)\partial_y\hat{z}(\nu, y)\widehat{\psi}^{i_m}_y
-\Big(\big[W''\big(\hat{z}(\nu, y)\big)\big]^2-\big[W''(1)\big]^2\Big)\widehat{\psi}^{i_m}
\\[3mm]
&+\Big\{\partial_{yy}W''\big(\hat{z}(\nu, y)\big)+W'''\big(\hat{z}(\nu, y)\big)\big[\partial_{yy}\hat{z}(\nu, y)-W'\big(\hat{z}(\nu, y)\big)\big]\Big\}\widehat{\psi}^{i_m}
\\[3mm]
&-\frac{2(n-1)}{y+y_{i_m}+\rho(\nu+\hat{t}_{i_m})}\widehat{\psi}^{i_m}_{yyy}
-\frac{(n-3)(n-1)}{\big(y+y_{i_m}+\rho(\nu+\hat{t}_{i_m})\big)^2}\widehat{\psi}^{i_m}_{yy}
\\[3mm]
&+\Bigg[\frac{2(n-1)W''\big(\hat{z}(\nu, y)\big)}{y+y_{i_m}+\rho(\nu+\hat{t}_{i_m})}
-\frac{(3-n)(n-1)}{\big(y+y_{i_m}+\rho(\nu+\hat{t}_{i_m})\big)^3}\Bigg]\widehat{\psi}^{i_m}_{y}
\\[3mm]
&+2\frac{n-1}{y+y_{i_m}+\rho(\nu+\hat{t}_{i_m})}W'''\big(\hat{z}(\nu, y)\big)\hat{z}_y\widehat{\psi}^{i_m}
+\frac{g^{i_m}\big(\nu+\hat{t}_{i_m}, y+y_{i_m}+\rho(\nu+\hat{t}_{i_m})\big)}
{\Phi\big(\hat{t}_{i_m}, y_{i_m}+\rho(\hat{t}_{i_m})\big)}
\\[3mm]
&-\partial_\nu \rho(\nu+\hat{t}_{i_m})\widehat{\psi}^{i_m}_{y}
\end{align*}
and $\hat{z}(\nu, y)=z\big(\nu+\hat{t}_{i_m}, y+y_{i_m}+\rho(\nu+\hat{t}_{i_m})\big)$.

According to the lemma \ref{lem1},  \eqref{eq3.3},  \eqref{f3},  \eqref{eq3.15} and \eqref{f4},  we have that
\begin{align*}
  \abs{\hat{f}_{i_m}}\leq &C(1+\abs{y})^p\Bigg[\exp\big\{-\alpha\abs{y+y_{i_m}}\big\}
  +\|g^{i_m}\|_{C_{\Phi}((s_{i_m}, t_{i_m})\times(0,+\infty))}
  \\[2mm]
  &\qquad\qquad+\frac{1}{\left[-(\nu+\hat{t}_{i_m})\right]^{3/4}}+\sum_{l=1}^3\left(y+y_{i_m}+\rho(\nu+\hat{t}_{i_m})\right)^{-l}\Bigg]
  \overline{\chi}_{\left\{y>\frac{\delta_0}{2}-\rho(\nu+\hat{t}_{i_m})-y_{i_m}\right\}},
\end{align*}
 for all $(\nu, y)\in \big(s_{i_m}-\hat{t}_{i_m},  0\big)\times(-\rho(\nu+\hat{t}_{i_m})-y_{i_m},+\infty)$,  where $C$ depends on $\alpha,  R,  n$ and $\|h\|_{L^\infty}$,  and $\overline{\chi}_{A}$ is the characteristic function of the set $A$.

Thus by \eqref{f2},  similar arguments as \eqref{13} and $\abs{y_{i_m}}\leq R+2$,  let $m$ goes to infinity,  and then we can get the desired result since $\|g^{i_m}\|_{C_{\Phi}((s_{i_m}, \bar{t}_{i_m})\times(0,+\infty))}\rightarrow0$ as index $m$ goes to infinity in \eqref{eq8} and the following fact that
\begin{align*}
&\int_{0}^{\nu+\check{t}_{i_m}}\int_{\R}e^{-\alpha^4\tau}\abs{\mathbf{Q}(\tau, y-x)}\big(1+\abs{x}\big)^p
\exp\big\{-\alpha\abs{x+y_{i_m}}\big\}{\mathrm d}x{\mathrm d}\tau
\\[2mm]
  &\leq C\int_{0}^{\nu+\check{t}_{i_m}}\int_{\R}\exp\left\{-\alpha^4\tau-\frac{1}{4}\abs{x}\right\}
  \exp\left\{-\frac{\alpha}{2}\abs{y-\tau^{1/4}x}\right\}{\mathrm d}x{\mathrm d}\tau
\\[2mm]
&\leq C\int_{0}^{\nu+\check{t}_{i_m}}\int_{\R}\exp\left\{-\alpha^4\tau-\frac{1}{4}\abs{x}\right\}
 \frac{1}{\left(1+\abs{y-\tau^{1/4}x}\right)^2}{\mathrm d}x{\mathrm d}\tau
 \\[2mm]
&\leq C\int_{0}^{\nu+\check{t}_{i_m}}\int_{\R}\exp\left\{-\alpha^4\tau-\frac{1}{4}\abs{x}\right\}
 \left(\frac{1+\abs{\tau^{1/4}x}}{1+\abs{y}}\right)^2{\mathrm d}x{\mathrm d}\tau
\\[2mm]
& \leq \frac{C}{\left(1+\abs{y}\right)^2},
\end{align*}
where $\check{t}_{i_m}:=\hat{t}_{i_m}-s_{i_m}$ and $C>0$ only depends on $\alpha$.

\textbf{Step} $\mathbf{4}$.
To proceed further,  we need  the following result. However, we can not find  the proof of this result in references. Here we give a proof in details.
\begin{lem}\label{lem2}
Considering the Hilbert space
$$H=\left\{\phi(y)\in H^2(\R):\int_{\R}\phi(y)\omega'(y){\mathrm d}y=0\right\}, $$
then the following inequality is valid
\begin{equation}\label{eqi}
\int_\R\abs{\phi''(y)-W''(\omega)\phi(y)}^2{\mathrm d}y\geq c\int_\R\abs{\phi(y)}^2{\mathrm d}y,   \qquad \forall\, \phi\in H,
\end{equation}
where $c>0$.
\end{lem}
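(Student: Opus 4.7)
\bigskip

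\noindent\textbf{Proof plan for Lemma \ref{lem2}.}
The strategy is to exploit the well-known spectral structure of the one-dimensional Schr\"odinger-type operator
\[
L^{\ast}\phi := -\phi'' + W''(\omega)\phi
\]
on $L^{2}(\R)$ with domain $H^{2}(\R)$, together with a Cauchy--Schwarz step to upgrade a coercivity estimate for the quadratic form to the stated $L^{2}$ bound for $\|L^{\ast}\phi\|_{L^{2}}$. The operator $L^{\ast}$ is self-adjoint, and differentiating the profile equation $\omega''=W'(\omega)$ in \eqref{eqq} gives $L^{\ast}\omega'=0$, so $\omega'$ is an eigenfunction at the eigenvalue $0$. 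Since $\omega'>0$ by \eqref{eqq}, standard Perron-type/Sturm--Liouville arguments identify $\omega'$ as the ground state of $L^{\ast}$; in particular $L^{\ast}\geq0$ and $0$ is a simple eigenvalue.

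Next, the coefficient $V(y):=W''(\omega(y))$ satisfies $V(y)\to W''(\pm1)=\alpha^{2}>0$ as $y\to\pm\infty$, with exponential rate by Lemma \ref{lem1}. Hence by a standard application of Weyl's theorem the essential spectrum of $L^{\ast}$ equals $[\alpha^{2},+\infty)$, and any spectrum of $L^{\ast}$ in $[0,\alpha^{2})$ consists of finitely many eigenvalues of finite multiplicity. Combining this with step one, the spectrum of $L^{\ast}$ lies in $\{0\}\cup[\lambda_{1},+\infty)$ for some $\lambda_{1}>0$ (where $\lambda_{1}=\alpha^{2}$ if no discrete eigenvalue sits in the gap, and otherwise $\lambda_{1}$ is the smallest positive eigenvalue). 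By the spectral theorem applied on the orthogonal complement $H=(\omega')^{\perp}\cap H^{2}$, we obtain the quadratic-form coercivity
\[
\langle L^{\ast}\phi,\phi\rangle_{L^{2}}=\int_{\R}\bigl(\abs{\phi'}^{2}+W''(\omega)\abs{\phi}^{2}\bigr)\,{\mathrm d}y\,\geq\,\lambda_{1}\int_{\R}\abs{\phi}^{2}\,{\mathrm d}y,\qquad\forall\,\phi\in H.
\]

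Finally, to convert this into the $L^{2}$-estimate on $L^{\ast}\phi$, I apply Cauchy--Schwarz:
\[
\lambda_{1}\|\phi\|_{L^{2}}^{2}\,\leq\,\langle L^{\ast}\phi,\phi\rangle_{L^{2}}\,\leq\,\|L^{\ast}\phi\|_{L^{2}}\,\|\phi\|_{L^{2}},
\]
so $\|L^{\ast}\phi\|_{L^{2}}\geq\lambda_{1}\|\phi\|_{L^{2}}$, and squaring gives the conclusion \eqref{eqi} with $c=\lambda_{1}^{2}$. The truly delicate ingredients are the identification of $\omega'$ as the ground state (relying on its positivity from \eqref{eqq}) and the location of the essential spectrum at $\alpha^{2}$; neither step involves computation specific to $W$ beyond the structural assumptions \eqref{eq1.4} and the decay in Lemma \ref{lem1}. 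The main obstacle, if any, would be handling the possible presence of additional discrete eigenvalues in the gap $(0,\alpha^{2})$, but this is harmless since they are strictly positive and finite in number, so the infimum $\lambda_{1}$ on $H$ is automatically positive.
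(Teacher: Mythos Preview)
Your argument is correct: the combination of (i) $\omega'>0$ being the ground state of $L^{\ast}=-\partial_{yy}+W''(\omega)$, (ii) Weyl's theorem giving $\sigma_{\mathrm{ess}}(L^{\ast})=[\alpha^{2},\infty)$, and (iii) the spectral theorem on $(\omega')^{\perp}$ followed by Cauchy--Schwarz, yields \eqref{eqi} with $c=\lambda_{1}^{2}$.

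The paper, however, deliberately avoids invoking these spectral black boxes and gives a self-contained proof. Its route is: first write $\phi=\zeta_{1}\omega'$ and compute $\int|L^{\ast}\phi|^{2}=\int|2\omega''\zeta_{1}'+\omega'\zeta_{1}''|^{2}\geq0$, identifying the kernel by solving an explicit second-order ODE for $\zeta_{1}$; then run a contradiction argument on a normalized sequence $\phi_{m}\in H$ with $\|L^{\ast}\phi_{m}\|_{L^{2}}\to0$. The key compactness step exploits that $W''(\omega)-W''(1)$ is exponentially localized, so the constant-coefficient bound $\|(-\partial_{yy}+\alpha^{2})\phi_{m}\|_{L^{2}}\geq\alpha^{2}\|\phi_{m}\|_{L^{2}}$ combined with local $L^{2}$ convergence forces a contradiction. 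Your approach is cleaner and gives an explicit constant tied to the spectral gap, but leans on cited theorems; the paper's approach is more hands-on, and the substitution $\phi=\zeta_{1}\omega'$ in fact reproduces in disguise the ground-state positivity argument you cite, while the contradiction step is essentially a bare-hands verification of the Weyl/compact-perturbation mechanism.
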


\begin{proof}
For any $\phi\in H$,  since $\omega'>0$,  we can set $\phi(y)=\zeta_1(y)\omega'(y)$,  then
\begin{align*}
\int_\R\abs{\phi''(y)-W''(\omega)\phi(y)}^2{\mathrm d}y=&\int_\R\abs{\omega'''(y)\zeta_1(y)+2\zeta'_1(y)\omega''(y)+\zeta''_1(y)\omega'(y)
-W''(\omega)\zeta_1(y)\omega'(y)}^2{\mathrm d}y
\\[2mm]
=&\int_\R\abs{2\omega''(y)\zeta_1'(y)+\omega'(y)\zeta_1''(y)}^2{\mathrm d}y\geq 0,
\end{align*}
where  we have used $\omega'''=W''(\omega)\omega'$ and integration by parts in the last equality.

Hence we have
\begin{equation*}
  \int_\R\abs{\phi''(y)-W''(\omega)\phi(y)}^2{\mathrm d}y=0\quad \text{if and only if}\quad \ 2\omega''(y)\zeta_1'(y)+\omega'(y)\zeta_1''(y)=0.
\end{equation*}
All solutions of the last equation have the following  form
$$
\zeta_1(x)= c_1\int_0^x\frac{1}{(\omega'(y))^2}{\mathrm d}y+c_2, \qquad c_1, c_2\in\R.
$$
Since
$$
\phi(x)=\zeta_1(x)\omega'(x)\in H^2(\R)\qquad \mbox{and}\qquad
\int_{\R}\phi(y)\omega'(y){\mathrm d}y=0,
$$
we have that $c_1=0$ and $c_2=0$.

Now we assume that there exists a sequence $\{\phi_m\}\in H$ such that
\begin{equation}\label{esi}
\int_{\R}\abs{\phi_m}^2{\mathrm d}y=1\ \ \text{and}\ \ \int_{\R}\abs{\phi''_m-W''(\omega)\phi_m}^2{\mathrm d}y\leq\frac{1}{m}.
\end{equation}
Thus,  $\phi_m\rightharpoonup\phi$ in $H(\R)$ and $\phi_m\rightarrow\phi$ in $L^2(K)$ for any compact subset $K\subset\R$,  which implies that
$$0=\int_{\R}\phi_m(y)\omega'(y){\mathrm d}y\rightarrow\int_{\R}\phi\omega'{\mathrm d}y=0, $$
for some $\phi\in H(\R)$. By Fatou's Lemma,  we have
\begin{equation*}
  \int_{\R}\abs{\phi''-W''(\omega)\phi}^2{\mathrm d}y=0.
\end{equation*}
Hence $\phi=0$.

On the other hand,  we first have
\begin{align*}
\big|W''(1)\big|^2\int_{\R}\abs{\phi_m}^2{\mathrm d}y&\leq\int_{\R} \big|\phi''_m-W''(1)\phi_m\big|^2{\mathrm d}y
\\[2mm]
&\leq 2\Bigg\{\int_{\R} \big|\phi''_m-W''(\omega)\phi_m\big|^2{\mathrm d}y
+\int_{\R} \Big|\big[W''(1)-W''(\omega)\big]\phi_m\Big|^2{\mathrm d}y\Bigg\}.
\end{align*}
Thus,  by \eqref{esi},  we get that
$$
\big|W''(1)\big|^2\leq \int_{\R}\Big|\big[W''(1)-W''(\omega)\big]\phi\Big|^2{\mathrm d}y.
$$
According to the assumptions on $W$ in \eqref{eq1.4},  $W''(1)\neq 0$. The above last inequality implies that $\phi\neq0$,  which is a contradiction.
\end{proof}

\textbf{Step} $\mathbf{5}$.
We will prove that \eqref{eq3.9} holds. Multiplying (\ref{eq3.13}) by $\hat{\psi}(y)$ and integrating with respect to $y$,  and using \eqref{eqi},  we have
\begin{align*}
0=&\frac{1}{2}\int_\R \big(\widehat{\psi}^2\big)_\nu{\mathrm d}y+\int_\R\abs{\widehat{\psi}_{yy}-W''\big(\omega(y+y_0)\big)\widehat{\psi}(\nu, y)}^2{\mathrm d}y
\\[2mm]
\geq&
\frac{1}{2}\int_\R \big(\widehat{\psi}^2\big)_\nu{\mathrm d}y+c\int_\R\abs{\widehat{\psi}(\nu, y)}^2{\mathrm d}y,
\end{align*}
where the constant $c>0$. Then, according to the Gronwall inequality,  we get that
\begin{equation*}
  a(\nu)\geq a(0)e^{-2c\nu},  \ \ \ \ \forall\, \nu<0,\
\end{equation*}
where the function
$$
a(\nu):=\int_\R\abs{\widehat{\psi}(\nu, y)}^2{\mathrm d}y.
$$
This is a contradiction since (\ref{eq3.18}).
The proof of \eqref{eq3.9} is completed.
\bigskip

\subsection{The projected linear problem involving the operator $L$}
We consider the following projection problem:
\begin{equation}\label{eq3.19}
  \left\{
\begin{array}{l}
\psi_t=L[\psi]+g(t, r)-c(t)\partial_r\widehat{\omega}(t, r)\qquad \text{in}\ (s, -T]\times(0, \infty),
\\[2mm]
\psi(s, r)=0, \qquad  \forall\, r\in  (0, +\infty),
\\[2mm]
\partial_r^j\psi(t, 0)=0, \qquad  \forall\, t\in (s, -T], \ \ j=1, 3,
 \end{array}
\right.
\end{equation}
where the linear operator $L[\psi]=F'\big(z(t, r)\big)[\psi] $ is defined by \eqref{eq2.13},  the function $g\in C_{\Phi}((s,  -T)\times (0, +\infty))$
and $c(t)$ satisfies the following relation
\begin{align}
  &c(t)\int_{0}^\infty\partial_r\widehat{\omega}(t, r)\omega'\big(r-\rho(t)\big)r^{n-1}{\mathrm d}r
\nonumber\\[3mm]
&= \int_{0}^\infty \Big[\omega'''\big(r-\rho(t)\big)+\frac{n-1}{r}\omega''\big(r-\rho(t)\big)
  -W''\big(z(t, r)\big)\omega'\big(r-\rho(t)\big)\Big]
\nonumber\\[3mm]
&\qquad\qquad\times\left[-\psi_{rr}-\frac{n-1}{r}\psi_r+W''\big(z(t, r)\big)\psi\right] r^{n-1}{\mathrm d}r
\nonumber\\[3mm]
&\quad+\int_{0}^\infty\left[\partial_{rr}z(t, r)
  +\frac{n-1}{r}\partial_{r}z(t, r)-W'\big(z(t, r)\big)\right]\psi \omega'\big(r-\rho(t)\big)r^{n-1}{\mathrm d}r
\nonumber\\[3mm]
&\quad+\int_{0}^\infty \psi(t, r)\partial_t\big[\omega'\big(r-\rho(t)\big)\big]r^{n-1}{\mathrm d}r
 \,+\,
\int_{0}^\infty g(t, r)\omega'\big(r-\rho(t)\big) r^{n-1}{\mathrm d}r,
\label{eq3.20}
\end{align}
for all $t\in (s, -T]$,  where the functions $\widehat{\omega}(t, r)$ and $z(t, r)$ are defined by \eqref{eq2.3} and \eqref{dz} respectively.

If $\psi$ is a solution of (\ref{eq3.19}) and $c(t)$ satisfies (\ref{eq3.20}),  integration by parts will imply that
$\psi$ satisfies the following the orthogonality condition
\begin{equation}\label{eq3.21}
    \int_0^\infty \psi(t, r)\omega'\big(r-\rho_i(t)\big) r^{n-1}{\mathrm d}r=0,
\qquad \forall\, t\in (s, -T).
\end{equation}
For (\ref{eq3.20}),  we have the following result:
\begin{lem}\label{lem6}
Let $T>0$ big enough and $\psi,  \psi_r, \psi_{rr}$ and $g\in C_{\Phi}((s,  -T)\times (0, +\infty))$.
Then there exists $c(t)$ such that $(\ref{eq3.20})$ holds.
Furthermore the following estimates are valid
\begin{align}
  \abs{c(t)}\leq \frac{C_0}{\big[\log\abs{t}\big]^{p-1}\abs{t}^{1/2}}\left\{\frac{1}{\abs{t}^{1/4}}\left[\sum_{l=0}^2
\big\|\partial_r^l\psi\big\|_{C_{\Phi}((s, -T)\times(0, \infty))}\right]
\ +\
\|g\|_{C_{\Phi}((s, -T)\times(0, \infty))}\right\},
\label{estimateofc1}
\end{align}
and
\begin{align}
\abs{\frac{c(t)\partial_r\widehat{\omega}(t, r)}{\Phi(t, r)}}
\,\leq\,
&\frac{C_0}{\abs{t}^{1/4}}
\sum_{l=0}^2\big\|\partial_r^l\psi\big\|_{C_{\Phi}((s, -T)\times(0, \infty))}
\ +\
C_0\|g\|_{C_{\Phi}((s, -T)\times(0, \infty))},
\label{estimateofc2}
\end{align}
  for any $t\in[s,  -T]$,  where the function $\widehat{\omega}(t, r)$ is given by \eqref{eq2.3},  $p\in(n, n+1]$ and $C_0$ is a positive constant which does not depend on $s,  t,  T$,  $\psi$ and $g$.
\end{lem}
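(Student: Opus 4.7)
The plan is to solve \eqref{eq3.20} for $c(t)$ by treating it as a scalar equation: divide both sides by the coefficient
\begin{equation*}
\mathcal{I}(t):=\int_{0}^{\infty}\partial_r\widehat{\omega}(t,r)\,\omega'\big(r-\rho(t)\big)\,r^{n-1}\,\mathrm{d}r,
\end{equation*}
and then estimate the four integrals on the right-hand side of \eqref{eq3.20}. The first step is to show a lower bound $\mathcal{I}(t)\geq c_\star\,\rho(t)^{n-1}$ for $|t|$ large. Using the definition \eqref{eq2.3} of $\widehat{\omega}$, a change of variables $y=r-\rho(t)$ and the exponential decay of $\omega'$ from Lemma \ref{lem1}, one has $\mathcal{I}(t)=\rho(t)^{n-1}\int_{\R}(\omega'(y))^{2}\,\mathrm{d}y\big(1+o(1)\big)$ as $t\to-\infty$. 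Since $\rho(t)^{n-1}\sim|t|^{(n-1)/4}$, this allows $c(t)$ to be defined uniquely.

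Next I estimate each of the four integrals on the right of \eqref{eq3.20}. For the first one, the crucial observation is that the bracket $\omega'''(r-\rho)+\tfrac{n-1}{r}\omega''(r-\rho)-W''(z)\omega'(r-\rho)$ simplifies: using $\omega'''=W''(\omega)\omega'$ (differentiate \eqref{eqq}) and $|z-\omega(r-\rho)|\lesssim r^{-2}$ (from the $\widetilde{z}$ term in \eqref{dz}), one obtains
\begin{equation*}
\Bigl|\omega'''(r-\rho)+\tfrac{n-1}{r}\omega''(r-\rho)-W''(z)\omega'(r-\rho)\Bigr|\leq\frac{C}{r}\,e^{-\alpha|r-\rho(t)|}\leq\frac{C}{|t|^{1/4}}\,e^{-\alpha|r-\rho(t)|}
\end{equation*}
on $r\geq\delta_0$. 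The second factor $[-\psi_{rr}-\tfrac{n-1}{r}\psi_r+W''(z)\psi]$ is controlled by $\sum_{l=0}^{2}\|\partial_r^l\psi\|_{C_{\Phi}}\,\Phi(t,r)$. For the second integral, using $\omega''=W'(\omega)$ one gets $\partial_{rr}z+\tfrac{n-1}{r}\partial_r z-W'(z)=\tfrac{n-1}{r}\omega'(r-\rho)+O(r^{-2})$ on $r\geq\delta_0$, which again has a small $r^{-1}$ prefactor times exponential decay. The third integral picks up $\rho'(t)=O(|t|^{-3/4})$, smaller still. The fourth involves $g$ directly. In each case the decisive estimate is of the form
\begin{equation*}
\int_{\delta_0}^{\infty}e^{-\alpha|r-\rho(t)|}\,\Phi(t,r)\,r^{n-1}\,\mathrm{d}r\leq C\,\rho(t)^{n-1}\,\Phi\big(t,\rho(t)\big)\leq\frac{C\,\rho(t)^{n-1}}{|t|^{1/2}\,(\log|t|)^{p-1}},
\end{equation*}
where I use $h(t)=O((\log|t|)^{-1})$ so that $1+|\rho(t)-\gamma_n(t)-\tfrac{1}{4\alpha}\log|t||\sim\tfrac{1}{4\alpha}\log|t|$ and the standard fact that $\int e^{-\alpha|y|}\,\mathrm{d}y$ is finite. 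Dividing through by $\mathcal{I}(t)\sim\rho(t)^{n-1}$ yields \eqref{estimateofc1}.

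For \eqref{estimateofc2}, I bound $\sup_r\bigl|\partial_r\widehat{\omega}(t,r)/\Phi(t,r)\bigr|$. Since $\partial_r\widehat{\omega}\sim\omega'(r-\rho)$ decays exponentially in $|r-\rho(t)|$ while $\Phi$ decays only polynomially in $|r-\gamma_n(t)-\tfrac{1}{4\alpha}\log|t||$, and these two centres differ by $h(t)-\tfrac{1}{4\alpha}\log|t|\sim-\tfrac{1}{4\alpha}\log|t|$, a direct computation using the inequality $(1+|r-\rho|+\log|t|)^p\leq C\,(1+|r-\rho|)^p\,(\log|t|)^p$ and the boundedness of $(1+|y|)^p e^{-\alpha|y|}$ on $\R$ gives $\sup_r|\partial_r\widehat{\omega}/\Phi|\leq C\,|t|^{1/2}(\log|t|)^{p-1}$. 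Multiplying this by the bound \eqref{estimateofc1} for $|c(t)|$, the factors $|t|^{1/2}(\log|t|)^{p-1}$ cancel and one arrives at \eqref{estimateofc2}.

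The main obstacle is the first integral estimate, where one must exploit the exact cancellation $\omega'''=W''(\omega)\omega'$ coming from the layer equation \eqref{eqq} in order to gain the extra factor $r^{-1}\sim|t|^{-1/4}$; without this cancellation the naive bound would only yield $\|\psi\|_{C_\Phi}$ without the $|t|^{-1/4}$ prefactor, and the subsequent nonlinear scheme in Section \ref{sec:nl} would break. A secondary technical point is verifying that $h$'s constraint \eqref{assumeh} is strong enough to make $\Phi(t,\rho(t))$ comparable to $\Phi(t,\gamma_n(t))$, which is what legitimises evaluating $\Phi$ at $r=\rho(t)$ in the estimates above.
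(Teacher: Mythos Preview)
Your proposal is correct and follows essentially the same route as the paper's proof: compute the coefficient $\mathcal{I}(t)\sim\rho(t)^{n-1}\int_\R(\omega')^2$, exploit the layer identities $\omega''=W'(\omega)$ and $\omega'''=W''(\omega)\omega'$ to make the brackets in the first two integrals of \eqref{eq3.20} carry an extra $r^{-1}$ (hence $|t|^{-1/4}$) factor, bound each integral against $\rho(t)^{n-1}\Phi(t,\rho(t))\sim\rho(t)^{n-1}|t|^{-1/2}(\log|t|)^{1-p}$, divide by $\mathcal{I}(t)$, and finally use $\sup_r|\partial_r\widehat{\omega}/\Phi|\le C|t|^{1/2}(\log|t|)^{p-1}$ to pass from \eqref{estimateofc1} to \eqref{estimateofc2}. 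One cosmetic caveat: the pointwise inequality $\tfrac{1}{r}e^{-\alpha|r-\rho(t)|}\le\tfrac{C}{|t|^{1/4}}e^{-\alpha|r-\rho(t)|}$ on $r\ge\delta_0$ is not literally true near $r=\delta_0$; you should either sacrifice part of the exponential (write $e^{-\alpha|r-\rho|/2}$ on the right, absorbing the remaining factor $\tfrac{1}{r}e^{-\alpha|r-\rho|/2}\le\tfrac{1}{\delta_0}e^{-\alpha\rho/4}\ll|t|^{-1/4}$ for $r\le\rho/2$) or, as the paper does, keep the $r^{-1}$ and pass it to $r^{n-2}$ inside the integral, which directly yields the $\rho(t)^{n-2}$ prefactor.
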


\begin{proof}
We first consider the left hand side of system $(\ref{eq3.20})$.
 By Lemma \ref{lem1} and the definition of $\widehat{\omega}(t, r)$ in \eqref{eq2.3},  we have that
\begin{equation*}
\begin{aligned}
\int_{0}^\infty\partial_r\widehat{\omega}(t, r)\omega'\big(r-\rho(t)\big)r^{n-1}{\mathrm d}r&=
\int_{\delta_0-\rho(t)}^\infty\omega'(r)\omega'(r)\big(r+\rho(t)\big)^{n-1}{\mathrm d}r
\ +\
O\left([\gamma_n(t)]^{n-2}\right)
\\[3mm]
&=\left(\rho(t)\right)^{n-1}\int_\R\big[\omega'(y)\big]^2dy
\ +\
O\left([\gamma_n(t)]^{n-2}\right),
\end{aligned}
\end{equation*}
where the functions $\rho(t)$ and $\gamma_n(t)$ are given by \eqref{drho1} and \eqref{dgamma-n} respectively.

For the first and second terms in the right hand side of \eqref{eq3.20},  we can obtain that
\begin{align*}
  I(t):=&\int_{0}^\infty\Big[-\omega'''\big(r-\rho_i(t)\big)-\frac{n-1}{r}\omega''\big(r-\rho_i(t)\big)
  +W''\big(z(t, r)\big)\omega'\big(r-\rho(t)\big)\Big]
  \\[3mm]
  &\qquad\qquad \times \left(\psi_{rr}+\frac{n-1}{r}\psi_r-W''\big(z(t, r)\big)\psi\right) r^{n-1} {\mathrm d}r
\\[3mm]
  &+\int_{0}^\infty\left[\partial_{rr}z(t, r)
  +\frac{n-1}{r}\partial_{r}z(t, r)-W'\big(z(t, r)\big)\right]\psi \omega'\big(r-\rho(t)\big)r^{n-1}{\mathrm d}r
  \\[3mm]
\leq &C\sum_{l=0}^2\big\|\partial_r^l\psi\big\|_{C_{\Phi}((s, -T)\times(0, \infty))}
   \int_{0}^\infty\Bigg[\abs{\omega'''\big(r-\rho(t)\big)
  -W''\big(z(t, r)\big)\omega'\big(r-\rho(t)\big)}
  \\[3mm]
&\qquad\qquad\qquad\qquad+\abs{\omega'\big(r-\rho(t)\big)}\abs{\partial_{rr}z(t, r)-W'\big(z(t, r)\big)}\Bigg]\Phi(t, r)r^{n-1}{\mathrm d}r
  \\[3mm]
&
 \,+\,
C\sum_{l=0}^2\big\|\partial_r^l\psi\big\|_{C_{\Phi}((s, -T)\times(0, \infty))}\int_{0}^\infty\Phi(t, r)
  \sum_{l=1}^3\abs{\omega^{(l)}\big(r-\rho(t)\big)}r^{n-2}{\mathrm d}r,
  \end{align*}
due to the facts that the functions $\psi,  \psi_r, \psi_{rr}$ belong to $C_{\Phi}((s,  -T)\times (0, +\infty))$.
By similar arguments in the proof of Lemma \ref{lem10} and the definition of $\Phi(t, r)$ in \eqref{dpsi},
 we have that
\begin{align*}
I(t)&\leq C\sum_{l=0}^2\big\|\partial_r^l\psi\big\|_{C_{\Phi}((s, -T)\times(0, \infty))} \Bigg\{ \int_{0}^\infty\Phi(t, r)
  \sum_{l=1}^3\abs{\omega^{(l)}\big(r-\rho(t)\big)}r^{n-2}{\mathrm d}r
\\[2mm]
&\qquad\qquad +\int_{0}^\infty\Phi(t, r)\abs{\omega'\big(r-\rho(t)\big)}r^{n-3}{\mathrm d}r\Bigg\}
\\[2mm]
&\leq C\frac{\big[\rho(t)\big]^{n-1}}{\abs{t}^{3/4}\big[\log\abs{t}\big]^{p-1}}
\left\{\sum_{l=0}^2\big\|\partial_r^l\psi\big\|_{C_{\Phi}((s, -T)\times(0, \infty))}\right\}.
  \end{align*}
Similarly,  by the definitions in \eqref{drho1} and \eqref{dpsi} again,  we have that
\begin{align*}
\int_{0}^\infty \psi(t, r)\partial_t\big[\omega'\big(r-\rho(t)\big)\big]r^{n-1}{\mathrm d}r
&= -\rho'(t)\int_{0}^\infty \psi(t, r)\omega''\big(r-\rho(t)\big)r^{n-1}{\mathrm d}r
\\[2mm]
  &=O\left\{\frac{\big[\rho(t)\big]^{n-1}}{\abs{t}^{5/4}}\right\}\|\psi\|_{C_{\Phi}((s, -T)\times(0, \infty))}.
\end{align*}
And,  there holds that
\begin{align*}
\abs{\int_{0}^\infty g(t, r)\omega'\big(r-\rho(t)\big)r^{n-1}{\mathrm d}r}
\leq
& C\int_{0}^\infty \Phi(t, r)\abs{\omega'\big(r-\rho(t)\big)}r^{n-1}{\mathrm d}r
\\[2mm]
\leq
& C\frac{\big[\rho(t)\big]^{n-1}}{\abs{t}^{1/2}\big[\log\abs{t}\big]^{p-1}}.
\end{align*}
According  the above estimates and \eqref{eq3.20},  we can get the validity of \eqref{estimateofc1}.

Using the following fact
\begin{equation*}
  \abs{\frac{\partial_r\widehat{\omega}(t, r)}{\Phi(t, r)}}\leq C\abs{t}^{1/2}\big[\log\abs{t}\big]^{p-1}, \qquad \text{for all} \ r>0,
\end{equation*}
where $\widehat{\omega}(t, r)$ is defined by \eqref{eq2.3} and $C$ is a constant which does not depend on $t$,  we can obtain \eqref{estimateofc2}.
\end{proof}

According to Lemma \ref{lem6},  we have
\begin{lem}\label{lem7}
Let $g\in C_{\Phi}((s,  -T)\times (0,\infty))$,  then there exists a uniform constant $T_0>0$ and a unique solution
$\psi^s$ of problem $(\ref{eq3.19})$. Moreover,   $\psi^s$ satisfies the orthogonality conditions in $(\ref{eq3.21})$ with $t\in(s, -T_0)$,  and
\begin{equation}\label{eq3.27}
  \sum_{l=0}^3\big\|\partial_r^l\psi^s\big\|_{C_{\Phi}((s, t)\times(0, \infty))}\leq C\|g\|_{C_{\Phi}((s, t)\times(0, \infty))},
\end{equation}
where $t\in(s, -T_0)$ and $C$ is a uniform constant which does not depend on $t,  s$.
\end{lem}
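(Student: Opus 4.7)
The plan is to construct $\psi^s$ as the limit of an iteration that couples Proposition \ref{prop8} (existence for the unprojected biharmonic-type parabolic problem), Lemma \ref{lem5} (a priori decay in the orthogonal subspace), and Lemma \ref{lem6} (control of the Lagrange multiplier $c(t)$). I would set $\psi_0 \equiv 0$ and, given $\psi_m$, define $c_m(t) := c[\psi_m, g](t)$ as the right-hand side of (\ref{eq3.20}) with $\psi = \psi_m$, which is well defined and bounded by Lemma \ref{lem6}. Then let $\psi_{m+1}$ be the unique solution of
\[
\partial_t \psi_{m+1} = L[\psi_{m+1}] + g(t,r) - c_m(t)\,\partial_r \widehat{\omega}(t,r), \qquad \psi_{m+1}(s,\cdot)\equiv 0,
\]
together with $\partial_r^j \psi_{m+1}(t,0) = 0$ for $j\in\{1,3\}$. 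Existence and uniqueness follow from Proposition \ref{prop8} applied to the $\R^n$-realization of $L$ (which is $-\Delta^2$ plus smooth bounded lower-order terms depending on $z(t,|x|)$), and $\psi_{m+1}$ vanishes identically on $(0,\delta_0/2)$ by the same constant-coefficient uniqueness argument used after problem (\ref{eq3.5}).

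For the increments $\eta_m := \psi_{m+1} - \psi_m$, I would aim at a contraction bound of the form
\[
\sum_{l=0}^{3}\|\partial_r^l \eta_m\|_{C_\Phi((s,-T_0)\times(0,\infty))} \leq \frac{C_1}{T_0^{1/4}}\sum_{l=0}^{3}\|\partial_r^l \eta_{m-1}\|_{C_\Phi((s,-T_0)\times(0,\infty))},
\]
uniformly in $s$. Since $\eta_m$ satisfies $\partial_t \eta_m = L[\eta_m] - c[\eta_{m-1},0](t)\,\partial_r\widehat{\omega}$ with zero initial data (the $g$-contributions in $c_m - c_{m-1}$ cancel by linearity), estimate (\ref{estimateofc2}) of Lemma \ref{lem6} provides the desired small factor $C_0/|t|^{1/4}$ on the forcing. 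The main obstacle is that Lemma \ref{lem5} requires the orthogonality (\ref{eq3.6}), which $\eta_m$ does not satisfy a priori. To resolve this I would track the orthogonality defect $\beta_m(t) := \int_0^\infty \psi_m\,\omega'(r-\rho(t))\,r^{n-1}\,dr$; differentiating in $t$, using the equation for $\psi_m$, and using the definition of $c[\psi_m,g]$ from (\ref{eq3.20}) yields $\beta_m'(t) = c[\eta_{m-1},0](t)\,A_0(t)$ with $A_0(t):=\int_0^\infty \partial_r\widehat{\omega}\,\omega'\,r^{n-1}\,dr$, so the defect is driven linearly by the same increment. Splitting $\eta_m$ as $\tilde{\eta}_m + \alpha_m(t)\omega'(r-\rho(t))\chi(r)$ with $\alpha_m$ chosen to make $\tilde{\eta}_m$ orthogonal, and applying Lemma \ref{lem5} to $\tilde{\eta}_m$ while controlling $\alpha_m$ through the ODE for $\beta_m$ and (\ref{estimateofc1}), delivers the contraction for $T_0$ large enough that $C_1/T_0^{1/4} < 1/2$.

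The iteration is therefore Cauchy in the norm $\sum_{l=0}^{3}\|\partial_r^l\cdot\|_{C_\Phi}$ and converges to a limit $\psi^s$ which, by passing to the limit, satisfies (\ref{eq3.19}) with the self-consistent choice $c(t) = c[\psi^s,g](t)$. At this fixed point the identity for $\beta_m'$ collapses to $\beta_*'(t) \equiv 0$; combined with $\beta_*(s) = 0$ this gives orthogonality (\ref{eq3.21}) for $\psi^s$. Applying Lemma \ref{lem5} to $\psi^s$ with forcing $g - c(t)\partial_r\widehat{\omega}$ and using (\ref{estimateofc2}) to estimate the Lagrange-multiplier contribution yields
\[
\sum_{l=0}^{3}\|\partial_r^l \psi^s\|_{C_\Phi} \leq C\|g\|_{C_\Phi} + \frac{CC_0}{T_0^{1/4}}\sum_{l=0}^{2}\|\partial_r^l \psi^s\|_{C_\Phi},
\]
and enlarging $T_0$ once more to absorb the second term into the left-hand side produces (\ref{eq3.27}). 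Uniqueness then follows by applying the same a priori estimate to the difference of two solutions of the projected problem with identical data, which satisfies the hypotheses of Lemma \ref{lem5} with $g\equiv 0$ and hence vanishes.
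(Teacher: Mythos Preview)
Your approach is essentially the same as the paper's: a fixed-point iteration in which the Lagrange multiplier is updated from the current iterate via (\ref{eq3.20}) and the contraction is driven by the $|t|^{-1/4}$ smallness in estimate (\ref{estimateofc2}) of Lemma~\ref{lem6} together with the a priori bound of Lemma~\ref{lem5}. The paper organizes the argument slightly differently---running the contraction first on the unit interval $(s,s+1)$ and then extending to $(s,-T_0)$ in finitely many steps---and it invokes Lemma~\ref{lem5} on the iterates without explicitly checking the orthogonality hypothesis (\ref{eq3.6}) before the fixed point is reached; your explicit tracking of the defect $\beta_m$ and the splitting $\eta_m=\tilde\eta_m+\alpha_m\,\omega'\chi$ is exactly what is needed to make that step rigorous, but the overall scheme is the same.
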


\begin{proof}
We will use a fixed-point argument to prove this lemma.
Let
$$
X^s:=\Big\{\psi \,:\, \sum_{l=0}^3\big\|\partial_r^l\psi\big\|_{C_{\Phi}((s, s+1)\times(0, \infty))}<+\infty\Big\},
$$
and $\mathcal{T}^s(g)$ be the solution of (\ref{eq3.5}) with $-T=s+1$.
We consider the operator $\mathcal{A}^s: X^s\rightarrow X^s$ defined by
$$
\mathcal{A}^s(\psi):=\mathcal{T}^s(g-C(\psi)),
\qquad
C(\psi)=c(t)\partial_r\widehat{\omega}(t, r),
$$
where $\widehat{\omega}(t, r)$ is given by \eqref{eq2.3} and
the function $c(t)$ satisfies (\ref{eq3.20}).
Hence $c(t)$ depends on $\psi$,  and $\psi$ satisfies the orthogonality conditions in \eqref{eq3.21}.
By Lemma \ref{lem5},  we have that
\begin{equation}\label{eq3.26}
  \sum_{l=0}^3\big\|\partial_r^l\mathcal{A}^s(\psi)\big\|_{C_{\Phi}((s, s+1)\times(0, \infty))}\leq C_1\|g-C(\psi)\|_{C_{\Phi}((s, s+1)\times(0, \infty))},
\end{equation}
for a uniform constant $C_1>0$.

Set
$$c:=(C_0+C_1)\|g\|_{C_{\Phi}((s, s+1)\times(0, \infty))}$$
and
$$
X^s_c:=\big\{\psi:\|\psi\|_{C_{\Phi}((s, s+1)\times(0, \infty))}<2c\big\},
$$
where $C_0$ and $C_1$ are given by  Lemma \ref{lem6} and (\ref{eq3.26}) respectively.
It is easy to derive the following two facts:

\smallskip
\noindent{\textbf{(1).}}
By (\ref{eq3.26}) and Lemma \ref{lem6},  we have
\begin{equation*}
  \begin{aligned}
\sum_{l=0}^3\big\|\partial^l_r\mathcal{A}^s(\psi)\big\|_{C_{\Phi}((s, s+1)\times(0, \infty))}
\leq & C_0\Big(\|C(\psi)\|_{C_{\Phi}((s, s+1)\times(0, \infty))}+\|h\|_{C_{\Phi}((s, s+1)\times(0, \infty))}\Big)
\\
\leq&\frac{CC_0}{\sqrt[4]{\abs{s+1}}}\Big(\sum_{l=0}^2\big\|\partial_r^l\psi\big\|_{C_{\Phi}((s, s+1)\times(0, \infty))}
\Big)+c,
\end{aligned}
\end{equation*}
where $C$ is given by Lemma \ref{lem6}.

\smallskip
\noindent{\textbf{(2).}}
For any $\psi_1,  \psi_2\in X^s_c$,  by (\ref{eq3.26}) and Lemma \ref{lem6} again,  we have
\begin{align*}
\sum_{l=0}^3\big\|\partial^l_r\left[\mathcal{A}^s(\psi_1)-\mathcal{A}^s(\psi_2)\right]\big\|_{C_{\Phi}((s, s+1)\times(0, \infty))}
\leq
& C_0\,\|C(\psi_1)-C(\psi_2)\|_{C_{\Phi}((s, s+1)\times(0, \infty))}
\\[2mm]
\leq & C_0\,\|C(\psi_1-\psi_2)\|_{C_{\Phi}((s, s+1)\times(0, \infty))}
\\[2mm]
\leq&\frac{CC_0}{\sqrt[4]{\abs{s+1}}}\,\|\psi_1-\psi_2\|_{C_{\Phi}((s, s+1)\times(0, \infty))},
\end{align*}
where $C_0$ is given by Lemma \ref{lem6}.

Hence,  taking $s$ large enough,  we have that the operator $\mathcal{A}^s$ is a contraction map from $X^s_c$ to itself.
According to the Banach Fixed Theorem, we know that there exists a unique $\psi^s\in X^s_c$ such that $\mathcal{A}^s(\psi^s)=\psi^s$, which is a solution to problem (\ref{eq3.19}) with $-T=s+1$.

Next we will extend the solution $\psi^s(t, r)$ in $(s, s+1)\times(0,+\infty)$ to $(s,  -T_0)\times(0,+\infty)$,
which will satisfy the orthogonality condition in (\ref{eq3.20})
and the a priori estimate in Lemma \ref{lem5}. We choose $T_0$ large enough such  that $\frac{CC_0}{\sqrt[4]{T_0}}<1$,  where $C$ is given by Lemma \ref{lem6} and $C_0$ is given by (\ref{eq3.26}). Thus the above fixed-point argument can be repeated when $s+1\leq -T_0$. Hence passing  finite steps of fixed-point  arguments, the solution $\psi^s(t, r)$ can be extended to $(s, -T_0]$. Moreover the solution $\psi^s$ satisfies (\ref{eq3.27}) and the orthogonality condition.
\end{proof}

\subsection{The solvability of a linear projected  problem}
In this section,  we devote to building the solvability of the following linear parabolic projected problem:
\begin{equation}\label{eq3.1}
\left\{
\begin{array}{l}
\psi_t=L[\psi]+g(t, r)-c(t)\partial_r\widehat{\omega}(t, r)
 \qquad\text{in}\
(-\infty, -T]\times(0, \infty),
\\[2mm]
\int_{\R}r^{n-1}\psi(t, r)\omega'\big(r-\rho(t)\big){\mathrm d}r=0, \hspace{0.75cm} \text{for all}\ t\in (-\infty, -T],
 \end{array}
\right.
\end{equation}
for a bounded function $g$,  and $T>0$ fixed sufficiently large.
In the above, the linear operator $L$ is given by \eqref{eq2.13},  the functions $\rho(t)$  and $\widehat{\omega}(t, r)$ are defined by (\ref{drho1}) and \eqref{eq2.3} respectively.
The function $c(t)$ solves the following relation:
\begin{align}
&c(t)\int_{0}^\infty\partial_r\widehat{\omega}(t, r)\omega'\big(r-\rho(t)\big)r^{n-1}{\mathrm d}r
\nonumber\\[3mm]
&=
 \int_{0}^\infty \Big[\omega'''\big(r-\rho(t)\big)+\frac{n-1}{r}\omega''\big(r-\rho(t)\big)
  -W''\big(z(t, r)\big)\omega'\big(r-\rho(t)\big)\Big]
\nonumber\\[3mm]
&\qquad\qquad \times\left[-\psi_{rr}-\frac{n-1}{r}\psi_r+W''\big(z(t, r)\big)\psi\right]r^{n-1}{\mathrm d}r
\nonumber\\[3mm]
&\quad+\int_{0}^\infty\Bigg[\partial_{rr}z(t, r)+\frac{n-1}{r}\partial_{r}z(t, r)-W'\big(z(t, r)\big)\Bigg]\psi \omega'\big(r-\rho(t)\big)r^{n-1}{\mathrm d}r
\nonumber\\[3mm]
&\quad+\int_{0}^\infty \psi(t, r)\partial_t[\omega'\big(r-\rho(t)\big)]r^{n-1}{\mathrm d}r
 \,+\,
\int_{0}^\infty g(t, r)\omega'\big(r-\rho(t)\big)r^{n-1}{\mathrm d}r,
\label{eq3.2}
\end{align}
for all $t<-T$,  where $z(t, r)$ is given by \eqref{dz}.
Indeed,  this can be solved uniquely since if $T$ is taken sufficiently large,  the coefficient $\int_{0}^\infty\partial_r\widehat{\omega}(t, r)\omega'\big(r-\rho(t)\big)r^{n-1}{\mathrm d}r$ is strictly positive.

The main result in this section is as follows:
\begin{prop}\label{prop2} There exist positive constants $T_1$ and  $C$ such that for any $t\leq -T_1$ and $g\in C_{\Phi}((-\infty, t)\times(0, \infty))$,
 problem \eqref{eq3.1}-\eqref{eq3.2} has a solution $\psi=\mathcal{A}(g)$,   which defines a linear operator of $g$ and
satisfies the following estimate
\begin{equation}\label{eq3.4}
 \sum_{l=0}^3\big\|\partial^l_r\psi\big\|_{C_{\Phi}((-\infty, t)\times(0, \infty))}
 \leq C\|g\|_{C_{\Phi}((-\infty, t)\times(0, \infty))},
\qquad \text{for all}\  t\leq-T_1,
\end{equation}
where we denote the $l$-th order derivative of $\psi(t, r)$ with respect of $r$ by $\partial^l_r\psi$.
\end{prop}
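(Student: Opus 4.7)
The plan is to construct the solution on the infinite interval $(-\infty,-T_1]$ as a limit of finite-time solutions already produced by Lemma \ref{lem7}. First, for each $s$ with $s+1 \leq -T_0$, let $\psi^{s}$ be the solution of the projected problem \eqref{eq3.19}-\eqref{eq3.20} on $(s,-T_0]\times(0,\infty)$ with initial condition $\psi^{s}(s,r)=0$, furnished by Lemma \ref{lem7}. The key input is estimate \eqref{eq3.27}:
\begin{equation*}
\sum_{l=0}^{3}\big\|\partial_r^l\psi^{s}\big\|_{C_{\Phi}((s,-T_0)\times(0,\infty))}\leq C\|g\|_{C_{\Phi}((-\infty,-T_0)\times(0,\infty))},
\end{equation*}
with $C$ independent of $s$. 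Regarded as a bound on the full future interval, this automatically yields a uniform bound for the functions $\psi^{s}$ on $(s,-T_0)$.

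Second, I would take a sequence $s_m\to-\infty$ and extract a limit. The uniform $C_{\Phi}$-control on $\psi^{s_m}$ and its first three radial derivatives, combined with the mild formulation from Definition \ref{demild} and the kernel estimates of Proposition \ref{prop7} applied to the equation rewritten as in the beginning of Section \ref{sec:lp}, provides uniform H\"older-in-$t$ estimates on compact subsets of $(-\infty,-T_1]\times(0,\infty)$ away from $r=0$. An Arzel\`a-Ascoli and diagonal extraction then gives $\psi^{s_m}\to\psi$ together with derivatives up to order three, locally uniformly. Since $g$ does not depend on $s$ and the Lagrange multiplier $c(t)$ defined by \eqref{eq3.20} is linear and continuous in $\psi$ and $g$ (by Lemma \ref{lem6}), one can pass to the limit in the equation \eqref{eq3.1}. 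The orthogonality condition is preserved in the limit: for each fixed $t\leq -T_1$, dominated convergence applies to $\int_0^{\infty}\psi^{s_m}(t,r)\omega'(r-\rho(t))r^{n-1}\mathrm{d}r$, since $|\psi^{s_m}|\leq C\Phi$ and $\omega'(r-\rho(t))$ decays exponentially away from $r=\rho(t)$. The uniform bound \eqref{eq3.27} then transfers to the limit $\psi$, giving \eqref{eq3.4}.

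Third, linearity of $\mathcal{A}$ follows either from the linearity at each finite step---the fixed point map $\mathcal{A}^{s}=\mathcal{T}^{s}(g-C(\cdot))$ in the proof of Lemma \ref{lem7} is affine linear in $g$, hence $g\mapsto\psi^{s}$ is linear---together with a consistent diagonal extraction, or more cleanly from a uniqueness statement for \eqref{eq3.1}-\eqref{eq3.2} within the class of $C_{\Phi}$ solutions. For uniqueness, the difference $\widetilde{\psi}$ of two solutions with $g=0$ and both satisfying \eqref{eq3.4} solves the homogeneous projected problem; for each $s\to-\infty$ one regards $\widetilde{\psi}$ on $(s,t)$ as the finite-time projected problem with source $0$ and initial datum $\widetilde{\psi}(s,\cdot)$, and combining Lemma \ref{lem5} with the fact that the contribution of such initial data decays (again via Proposition \ref{prop7} and the $\Phi$-decay at time $s$) forces $\widetilde{\psi}\equiv 0$. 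Defining $\mathcal{A}(g):=\psi$ then yields the claimed linear operator.

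The principal obstacle is the compactness step: justifying enough joint regularity in $(t,r)$ to pass to the limit in the fourth-order equation with variable coefficients involving $z(t,r)$. This is circumvented by treating the equation in mild form, absorbing all lower-order and curvature-type terms (the ones with weights $\frac{1}{r^k}$ and the $W^{(k)}(z)$-factors) into the right-hand side, and then invoking Proposition \ref{prop7} on bounded space-time cylinders, since the coefficients are smooth and bounded for $r\geq \frac{\delta_0}{2}$ and $t\leq -T_1$. Away from $r=\frac{\delta_0}{2}$, the remark following \eqref{eq3.5} guarantees $\psi^s\equiv 0$ on $(0,\frac{\delta_0}{2}]$, so the limit $\psi$ inherits the correct behavior at the origin and the boundary conditions at $r=0$ are trivially preserved.
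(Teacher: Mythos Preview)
Your proposal is correct and follows essentially the same approach as the paper: take the finite-time solutions $\psi^{s}$ from Lemma~\ref{lem7}, use the uniform bound \eqref{eq3.27} together with the parabolic regularity of Proposition~\ref{prop7} to extract a locally uniform limit as $s\to-\infty$, and verify that the limit solves \eqref{eq3.1}-\eqref{eq3.2} with the estimate \eqref{eq3.4}. The paper's own proof is only a few lines and leaves the compactness step, the preservation of the orthogonality condition, and the linearity of $\mathcal{A}$ implicit; your treatment of these points is more detailed but not a different route.
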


$\mathbf{Proof \ of\  Proposition \ \ref{prop2} }$: We choose a sequence $s_j\rightarrow -\infty$. Let $\psi^{s_j}$ be the solution to problem (\ref{eq3.19}) with $s=s_j$,  according to Lemma \ref{lem7}.
By (\ref{eq3.27}),  we can find that the sequence $\{\psi^{s_j}\}$
converges to $\psi$ (up to subsequence) locally uniformly in $(-\infty, -T_1)\times(0, \infty)$.
Using (\ref{eq3.7}) and Proposition \ref{prop7},  we have that $\psi$ is a solution of (\ref{eq3.1}) and satisfies (\ref{eq3.4}). The proof is completed.

\section{Solving the nonlinear projected problem} \label{sec:nl}
In this section,  we mainly solve the nonlinear problem (\ref{eq2.10})-(\ref{eq2.11}) by using the fixed-point arguments.
%
%
According to the result in Proposition \ref{prop2},  $\phi$ is a solution of (\ref{eq2.10})-(\ref{eq2.11}) if only if $\phi\in C_{\Phi}((-\infty, -T_1)\times(0, \infty)) $ is a fixed point of the  following operator
\begin{equation}\label{eq4.2}
 \mathbf{ T}(\phi):=\mathcal{A}\big(E(t, r)+N(\phi)-c(t)\partial_r\widehat{\omega}(t, r)\big),
\end{equation}
where $T_1$  and $\mathcal{A}$ are given by Proposition \ref{prop2}.

Let $T\geq T_1$,  We define a set
\begin{equation}\label{LambdaT}
  \Lambda_T:=\Big\{h\in C^1(-\infty, -T]:\ \|h\|_{\Lambda_T}<1\Big\},
\end{equation}
with the norm
\begin{equation}\label{LambdaTNorm}
  \|h\|_{\Lambda_T}:=\sup_{t\leq-T}|h(t)|+\sup_{t\leq -T}\Big(\frac{|t|}{\log|t|}|h'(t)|\Big),
\end{equation}
and also a close domain
\begin{equation}\label{eq4.9}
X_{T}:=\left\{\phi:\phi\in C_{\Phi}((-\infty, -T)\times(0, \infty))\ \ \text{and}\ \ \sum_{l=0}^2\big\|\partial^l_r\phi\big\|_{C_{\Phi}((-\infty, -T)\times(0, \infty))}
\leq \frac{2 \widehat{C}}{\log T}\right\},
\end{equation}
where the space $ C_{\Phi}((-\infty, -T)\times(0, \infty))$ is defined by \eqref{eq3.3} and $\widehat{C}$ is a fixed constant.

The main result is given by the following proposition
\begin{prop}\label{prop3}
 There exists $T_2\geq T_1$
such that for any given function $h(t)$ with each $h\in \Lambda_{T_2}$,  there is a solution $\phi(t, r, h)$ to $\phi=\mathbf{ T}(\phi)$ with respect to $\rho(t)=\gamma_n(t)+h(t)$. The solution $\phi(t, r, h)$ satisfies problem (\ref{eq2.10})-(\ref{eq2.11}).
Furthermore,  the following estimate holds
\begin{equation}\label{eq4.3}
  \sum_{l=0}^2\big\|\partial^l_r\phi(t, r, h_1)-\partial^l_r\phi(t, r, h_2)\big\|_{C_{\Phi}((-\infty, -T_1)\times(0, \infty))}
  \leq C\frac{1}{\log T_2}\|h_1-h_2\|_{\Lambda_{T_2}},
\end{equation}
where $T_1$  and $\mathbf{T}$ are given by Proposition \ref{prop2} and \eqref{eq4.2} respectively,   $C$ is a positive constant which does not depend on $h_1,  h_2$ and $T_2$.
\end{prop}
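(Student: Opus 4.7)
My plan is to apply a Banach fixed-point argument to the operator $\mathbf{T}$ from \eqref{eq4.2} on the closed ball $X_T$, taking $T_2\geq T_1$ sufficiently large so that the error, the nonlinear term $N(\phi)$, and the projection term $c(t)\partial_r\widehat{\omega}$ all become strict contractions after composition with the linear solver $\mathcal{A}$ of Proposition \ref{prop2}. The Lipschitz dependence on $h$ will then follow by differencing the fixed-point identity and exploiting the smooth dependence of all data on the parameter $\rho(t)=\gamma_n(t)+h(t)$.

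For invariance of $X_T$ under $\mathbf{T}$, combining \eqref{eq3.4} with Lemmas \ref{lem10} and \ref{lem6} yields
\begin{equation*}
\sum_{l=0}^{3}\big\|\partial_r^l \mathbf{T}(\phi)\big\|_{C_\Phi} \leq C\big(\|E\|_{C_\Phi} + \|N(\phi)\|_{C_\Phi}\big) + \frac{C}{|T|^{1/4}}\sum_{l=0}^{2} \|\partial_r^l \phi\|_{C_\Phi},
\end{equation*}
with $\|E\|_{C_\Phi}\leq C/\log T$ by Lemma \ref{lem10}. Since $F$ is polynomial in the second-order jet of its argument and the $\Delta^2$-piece is linear (so it cancels inside $N$), a Taylor remainder gives
\begin{equation*}
\|N(\phi)\|_{C_\Phi} \leq C\,\|\Phi\|_{L^\infty((-\infty,-T)\times (0,\infty))}\Big(\sum_{l=0}^{2}\|\partial_r^l \phi\|_{C_\Phi}\Big)^2,
\end{equation*}
which for $\phi\in X_T$ is $o(1/\log T)$ using $\|\Phi\|_{L^\infty}\leq C\log T/|T|^{1/2}$. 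Thus $\mathbf{T}(\phi)\in X_T$ once $T\geq T_2$ is large. For the contraction property I write $\mathbf{T}(\phi_1)-\mathbf{T}(\phi_2)=\mathcal{A}\big(N(\phi_1)-N(\phi_2)-(c_1-c_2)\partial_r\widehat{\omega}\big)$ and apply the mean-value bound $\|N(\phi_1)-N(\phi_2)\|_{C_\Phi}\leq C \big(\sum_l \|\partial_r^l \phi_j\|_{C_\Phi}\big) \big(\sum_l \|\partial_r^l(\phi_1-\phi_2)\|_{C_\Phi}\big)$; the universal factor $1/\log T$ turns $\mathbf{T}$ into a strict contraction, and Banach's theorem produces the unique fixed point $\phi(\cdot,\cdot,h)\in X_T$.

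For the Lipschitz estimate \eqref{eq4.3}, let $\phi_j:=\phi(\cdot,\cdot,h_j)$ and label $z_j, E_j, L_j, N_j, c_j, \widehat{\omega}_j, \mathcal{A}_j$ by $j$ according to $\rho_j=\gamma_n+h_j$. Writing both identities with the common reference solver $\mathcal{A}_1$ (by moving $(L_1-L_2)[\phi_2]$ to the right-hand side) gives
\begin{equation*}
\phi_1-\phi_2 = \mathcal{A}_1\big[(E_1-E_2)+(N_1(\phi_1)-N_2(\phi_2))+(L_1-L_2)[\phi_2]-(c_1\partial_r \widehat{\omega}_1-c_2\partial_r \widehat{\omega}_2)\big].
\end{equation*}
The definitions \eqref{eq2.3}, \eqref{dz}, \eqref{w1} show that $z$, $\widehat{\omega}$, and their derivatives depend in a $C^1$ fashion on $\rho$, so each bracketed difference is controlled by $C\|h_1-h_2\|_{\Lambda_{T_2}}$ times a factor of order $1/\log T_2$ (with the weight $|t|/\log|t|$ in $\|\cdot\|_{\Lambda_{T_2}}$ exactly absorbing the contributions of $\rho_j'(t)=\gamma_n'(t)+h_j'(t)$). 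Re-applying \eqref{eq3.4} and then absorbing the small factor from the left closes the estimate \eqref{eq4.3}.

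The main obstacle I anticipate is that the linear operator $L=F'(z)$ itself depends on $h$ through $z$, so the two solvers $\mathcal{A}_1$ and $\mathcal{A}_2$ genuinely differ. Handling this cleanly is what forces the reference-operator trick: one treats $(L_1-L_2)[\phi_2]$ as an additional source for $\mathcal{A}_1$, verifies via Lemma \ref{lem1} and \eqref{dwtd} that $\|(L_1-L_2)[\phi_2]\|_{C_\Phi}$ is indeed controlled by $C\|h_1-h_2\|_{\Lambda_{T_2}}(\log T_2)^{-1}\sum_l\|\partial_r^l\phi_2\|_{C_\Phi}$, and only then invokes Proposition \ref{prop2}. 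The argument closes because $1/\log T_2$ is a universal small parameter we can trade against, which is precisely why $T_2$ is taken large in the statement.
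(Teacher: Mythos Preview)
Your fixed-point argument for existence is essentially the paper's, and it is fine. The gap is in the Lipschitz part. You write
\[
\phi_1-\phi_2 \;=\; \mathcal{A}_1\big[(E_1-E_2)+(N_1(\phi_1)-N_2(\phi_2))+(L_1-L_2)[\phi_2]-(c_1\partial_r \widehat{\omega}_1-c_2\partial_r \widehat{\omega}_2)\big],
\]
but this identity is false: by construction, anything in the range of $\mathcal{A}_1$ satisfies the orthogonality condition $\int_0^\infty \psi\,\omega'(r-\rho_1)r^{n-1}\,\mathrm{d}r=0$, whereas $\phi_1-\phi_2$ does \emph{not}. Indeed $\phi_1$ is orthogonal to $\omega'(r-\rho_1)$ and $\phi_2$ to $\omega'(r-\rho_2)$, so
\[
\int_0^\infty(\phi_1-\phi_2)\,\omega'(r-\rho_1)\,r^{n-1}\,\mathrm{d}r
=\int_0^\infty\phi_2\big[\omega'(r-\rho_2)-\omega'(r-\rho_1)\big]r^{n-1}\,\mathrm{d}r\neq 0.
\]
Since the a~priori estimate of Lemma~\ref{lem5} (and hence Proposition~\ref{prop2}) genuinely uses the orthogonality condition---it is precisely what kills the kernel direction in Step~5 via Lemma~\ref{lem2} and Gronwall---you cannot invoke $\mathcal{A}_1$ on $\phi_1-\phi_2$.

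The paper repairs this by first projecting $\phi_2$ onto the correct orthogonal complement: set $\bar{\phi}_2=\phi_2-\tilde c(t)\,\partial_r\widehat{\omega}_1$ with $\tilde c(t)$ chosen so that $\bar{\phi}:=\phi_1-\bar{\phi}_2$ is orthogonal to $\omega'(r-\rho_1)$. One then applies Proposition~\ref{prop2} to $\bar{\phi}$ (which now legitimately solves a projected problem with operator $L_1$ and extra source terms including $(L_2-L_1)[\phi_2]$), and separately estimates $\tilde c(t)$ and $\tilde c'(t)$ from the identity defining $\tilde c$; both are $O\big(\|h_1-h_2\|_{\Lambda_{T_2}}/\log T_2\big)$ after using the orthogonality of $\phi_2$ with respect to $\rho_2$. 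Only then does one recover $\phi_1-\phi_2=\bar{\phi}-\tilde c\,\partial_r\widehat{\omega}_1$ and conclude \eqref{eq4.3}. Your ``reference-operator trick'' is the right instinct for the $L_1$ versus $L_2$ discrepancy, but it does not by itself restore the orthogonality needed to call $\mathcal{A}_1$.
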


To prove the above proposition,  we first prepare two lemmas.
We note that the error term $E(t, r)$ and the nonlinear term $N(\phi)$ in \eqref{Error}-(\ref{nonlinearterm}) are all dependent of $h$,  due to the setting $\rho(t)=\gamma_n(t)+h(t)$ in \eqref{drho1}. So we
denote $E(t, r)$ and $N(\phi)$ by $E(t, r, h)$ and $N(\phi, h)$.

\begin{lem}\label{lem8}
Let $h_1, h_2\in\Lambda_{T}$ and $\phi_1, \phi_2\in X_{T}$,  then there exists a constant $C$ depending on $\widehat{C}$ such that
\begin{equation}\label{eq4.4}
\begin{aligned}
 &\big\|N(\phi_1, h_1)-N(\phi_2, h_2)\big\|_{C_{\Phi}((-\infty, -T)\times(0, \infty))}
 \\[2mm]
 &\leq \frac{C}{\log T} \Bigg\{\|h_1-h_2\|_{\Lambda_{T}}
 +\sum_{l=0}^2\big\|\partial_r^l\big(\phi_1-\phi_2\big)\big\|_{C_{\Phi}((-\infty, -T)\times(0, \infty))}\Bigg\}
\end{aligned}
\end{equation}
and
\begin{equation}\label{eq4.5}
\begin{aligned}
 \|E(t, r, h_1)-E(t, r, h_2)\|_{C_{\Phi}((-\infty, -T)\times(0, \infty))}\leq &\frac{C}{\log T}\|h_1-h_2\|_{\Lambda_{T}}.
\end{aligned}
\end{equation}
\end{lem}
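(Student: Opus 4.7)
The plan is as follows. Both estimates \eqref{eq4.4} and \eqref{eq4.5} are Lipschitz bounds in two independent variables ($\phi$ and $h$), so I would split each difference into a $\phi$-increment (with $h$ fixed) and an $h$-increment (with $\phi$ fixed), estimate each separately, and verify that each contribution carries a factor of $1/\log T$ against the $C_{\Phi}$ weight.

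For the nonlinear estimate \eqref{eq4.4}, I first write
\[
N(\phi_1,h_1)-N(\phi_2,h_2)=\bigl[N(\phi_1,h_1)-N(\phi_2,h_1)\bigr]+\bigl[N(\phi_2,h_1)-N(\phi_2,h_2)\bigr].
\]
The first bracket I treat by the integral identity
\[
N(\phi_1)-N(\phi_2)=\int_0^1\bigl[F'\bigl(z+\phi_2+s(\phi_1-\phi_2)\bigr)-F'(z)\bigr][\phi_1-\phi_2]\,{\mathrm d}s,
\]
together with the smoothness of $F'$ in its first argument. Up to terms involving $\partial_r^l(\phi_1-\phi_2)$ for $l\leq 2$, this is pointwise bounded by $C(|\phi_1|+|\phi_2|)\sum_{l=0}^{2}|\partial_r^l(\phi_1-\phi_2)|$. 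Since $\phi_i\in X_T$, we have $|\phi_i(t,r)|\leq (2\widehat C/\log T)\,\Phi(t,r)$, so the product is dominated by $(C/\log T)\,\Phi^2\sum_l\|\partial_r^l(\phi_1-\phi_2)\|_{C_\Phi}$. Because $\Phi(t,r)\leq C(\log|t|)/|t|^{1/2}$ is itself small for $t\leq -T$, absorbing one factor of $\Phi$ into the prefactor gives the desired $(C/\log T)\,\Phi\sum_l\|\partial_r^l(\phi_1-\phi_2)\|_{C_\Phi}$. For the second bracket I note that $z$ depends on $h$ only through $\rho=\gamma_n+h$, so $\partial_h z=-[\omega'(r-\rho)+r^{-2}(n-1)(n-3)\widetilde\omega'(r-\rho)]\chi(r)$, which is uniformly bounded with the exponential spatial decay of $\omega',\widetilde\omega'$; since $N(\phi)$ is quadratic in $\phi$, its $h$-derivative is $O(|\phi|^2)$, and the same $X_T$-bound plus smallness of $\Phi$ again contributes $1/\log T$.

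For the error estimate \eqref{eq4.5}, I go back to the explicit decomposition $E=E_1+E_2$ in \eqref{de1}--\eqref{de2}. The $h$-dependence enters in two ways: through $\rho(t)=\gamma_n(t)+h(t)$ (in the arguments of $\omega,\omega',\omega'',\widetilde\omega$ and in $1/r$ factors combined with $\chi(r)$) and through $\rho'(t)=\gamma_n'(t)+h'(t)$ (as a prefactor in the leading term $\rho'(t)\omega'(r-\rho(t))$ of $E_1$ and in $\partial_t\widetilde z$ in $E_2$). Taylor expanding in $h$, the former contributes $|h_1-h_2|\cdot|\omega''(r-\rho)|$ weighted by polynomial factors in $r$, and the latter contributes $|h_1'(t)-h_2'(t)|\cdot|\omega'(r-\rho)|$. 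From the norm $\|\cdot\|_{\Lambda_T}$ in \eqref{LambdaTNorm} we have $|h_1-h_2|\leq\|h_1-h_2\|_{\Lambda_T}$ and $|h_1'-h_2'|\leq(\log|t|/|t|)\|h_1-h_2\|_{\Lambda_T}$; the exponential decay $e^{-\alpha|r-\rho|}$ coming from derivatives of $\omega$ absorbs the polynomial factor $(1+|r-\gamma_n-\tfrac{1}{4\alpha}\log|t||)^{-p}$ in $\Phi$, exactly as in the proof of Lemma \ref{lem10}. Comparing the resulting temporal decay ($|t|^{-3/4}$ for the $(h_1-h_2)$-contribution and $(\log|t|)/|t|$ for the $(h_1'-h_2')$-contribution) with $\Phi\sim(\log|t|)/|t|^{1/2}$ leaves an extra factor of order $|t|^{-1/4}$ or $|t|^{-1/2}$, and both are bounded by $1/\log T$ for $T$ large.

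The main obstacle is the careful bookkeeping over all the ingredients of $E$ and $F'$ in the transitional region $\delta_0/2\leq r\leq\delta_0$, where the cut-off $\chi(r)$ is active and the correction $r^{-2}(n-1)(n-3)\widetilde\omega(r-\rho)$ in $z(t,r)$ interacts nontrivially with the $h$-dependence. The contributions of $W^{(k)}(z)$ for $k\leq 4$ that appear through both $F'$ and $F''$ must also be tracked. Once these pieces are organized exactly along the lines of the proof of Lemma \ref{lem10}, each contribution satisfies the Lipschitz bound with prefactor $1/\log T$, coming from one of two sources: the $X_T$-smallness of $\phi_i$, or the mismatch between the time decay of the $h$-derivative of $E$ and the slower weight $(\log|t|)/|t|^{1/2}$ built into $\Phi$.
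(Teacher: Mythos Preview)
Your proposal is correct and follows essentially the same approach as the paper: split the $N$-difference into a $\phi$-increment and an $h$-increment, express each via the integral form of the second-order remainder $F''(\cdot)[\,\cdot\,,\,\cdot\,]$, and use the $X_T$-smallness $\sum_l\|\partial_r^l\phi_i\|_{C_\Phi}\le 2\widehat C/\log T$ to extract the $1/\log T$ factor; for $E$, both you and the paper differentiate the explicit expressions \eqref{de1}--\eqref{de2} in $\rho,\rho'$ and compare the resulting temporal decay to $\Phi$ exactly as in Lemma~\ref{lem10}. The only cosmetic difference is that the paper writes the quadratic bound with a single $\Phi$ factor (implicitly absorbing the second one via boundedness of $\Phi$), whereas you make the $\Phi^2$ structure explicit before absorbing one factor---both are valid and lead to the same conclusion.
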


\begin{proof}
By the definition of $N(\phi,  h)$ in (\ref{nonlinearterm}) and the smoothness of $W$,  we have that
\begin{align}\label{eq4.6}
&\nonumber\abs{N(\phi_1, h)-N(\phi_2, h)}
\\[2mm]\nonumber
&=\abs{\int_0^1\Big\{F'\big(z(t, r)+y\phi_1+(1-y)\phi_2\big)[\phi_1-\phi_2]-F'\big(z(t, r)\big)[\phi_1-\phi_2]\Big\}{\mathrm d}y}
\\[2mm]\nonumber
&=\abs{\int_0^1\Big\{F''\big(z(t, r)+\theta\big[y\phi_1+(1-y)\phi_2\big]\big)
\big[y\phi_1+(1-y)\phi_2,\, \phi_1-\phi_2\big]\Big\}{\mathrm d}y}
\\[2mm]
&\leq C\left(\sum_{l=0}^2\big\|\partial_r^l\big(\phi_1-\phi_2\big)\big\|_{C_{\Phi}((-\infty, T_1)\times(0, \infty))}\right)
\left(\sum_{l=0}^2\sum_{i=1}^2\big\|\partial_r^l\big(\phi_i\big)\big\|_{C_{\Phi}((-\infty, T_1)\times(0, \infty))}
\right)\Phi(t, r),
\end{align}
and
\begin{align}
\abs{N(\phi, h_1)-N(\phi, h_2)}
&
=\abs{\int_0^{1}\int_{0}^{1}F''\big(yz_1(t, r)+(1-y)z_2(t, r)+s\big)
\big[z_1(t, r)-z_2(t, r),\, \phi\big]{\mathrm d}yds}
\nonumber\\[2mm]
&\leq C\|h_1-h_2\|_{\Lambda_{T_1}}\|\phi\|_{C_{\Phi}((-\infty, T_1)\times(0, \infty))}\Phi(t, r),
\label{eq4.7}
\end{align}
where $z_i(t, r)$ is given by \eqref{dz} with $\rho(t)=\gamma_n(t)+h_i(t)$,  for $i=1, 2$.
In the above, $C$ is a positive constant independent of $h, h_1, h_2$ and $\phi, \phi_1, \phi_2$,  and the linear operator $F''(u)[v_1, v_2]$ is defined by \eqref{dFs}.
 Hence combining (\ref{eq4.6}) and (\ref{eq4.7}),  we can obtain that (\ref{eq4.4}) holds.

Next we prove that \eqref{eq4.5} also holds.
By \eqref{de1} and \eqref{de2},  similar arguments in Lemma \ref{lem10} will imply that
\begin{equation*}\begin{aligned}
\abs{E_1(t, r, h_1)-E_1(t, r, h_2)}
&\leq C\Bigg\{\abs{\rho'_1(t)\omega'\big(r-\rho_1(t)\big) \,-\, \rho'_2(t)\omega'\big(r-\rho_2(t)\big)}
\\[2mm]
&\quad\qquad\qquad+\frac{(n-1)^2(n-3)}{r^3}\abs{\omega'\big(r-\rho_1(t)\big)
\,-\,
\omega'\big(r-\rho_2(t)\big)}
\Bigg\}\overline{\chi}_{\left\{r\geq\delta_0\right\}}
\\[2mm]
&\quad
+\frac{C}{[\gamma_n(t)]^2}\Big\{\abs{h_1(t)-h_2(t)}+\abs{h'_1(t)-h'_2(t)}\Big\}\overline{\chi}_{\left\{ \frac{ \delta_0}{2}<r<\delta_0\right\}}
\\[2mm]
&\leq  C\|h_1-h_2\|_{\Lambda}\frac{\Phi(t, r)}{\log T},
\end{aligned}\end{equation*}
and
\begin{align*}
&\abs{E_2(t, r, h_1)-E_2(t, r, h_2)}
\\[2mm]
&
\leq  C\Bigg\{\abs{\rho'_1(t)\widetilde{\omega}'\big(r-\rho_1(t)\big)-\rho'_2(t)\widetilde{\omega}'\big(r-\rho_2(t)\big)}
\,+\,
\frac{1}{r^3}\sum_{l=0}^3\abs{\partial_r^l\omega\big(r-\rho_1(t)\big)
-\partial_r^l\omega\big(r-\rho_2(t)\big)}
\\[2mm]
&\qquad\qquad
\,+\, \frac{1}{r^3}\sum_{l=0}^3\abs{\partial_r^l\widetilde{\omega}\big(r-\rho_1(t)\big)
-\partial_r^l\widetilde{\omega}\big(r-\rho_2(t)\big)}
\Bigg\}\overline{\chi}_{\left\{r\geq\delta_0\right\}}
\\[2mm]
&\quad
+\frac{C}{[\gamma_n(t)]^2}\Big\{\abs{h_1(t)-h_2(t)}+\abs{h'_1(t)-h'_2(t)}\Big\}\overline{\chi}_{\left\{ \frac{\delta_0}{2}<r<\delta_0\right\}}
\\[2mm]
 &\leq  C\|h_1-h_2\|_{\Lambda}\frac{\Phi(t, r)}{\log T},
\end{align*}
where $\Phi(t, r)$ is defined by \eqref{dpsi} and $C>0$ is a uniform constant independent of $h_1, h_2$ and $t$. Using the fact that $E(t, r)=E_1(t, r)+E_2(t, r)$ and combining the above estimates,  we can get (\ref{eq4.5}).
\end{proof}

\begin{lem}
Let $h_1, h_2\in \Lambda_{T}$,  $\phi_1, \phi_2\in X_{T}$,  $c(\phi, h, t)$ satisfy equation $(\ref{eq3.20})$
with respect to $\phi$ and $\rho=\gamma_n+h$. Then
\begin{equation}\label{eq4.8}\begin{aligned}
\abs{c(\phi_1, h_1, t)-c(\phi_2, h_2, t)}
  &\leq \frac{C}{T^{3/4}\big[\log T\big]^{p-1}}\|\phi_1-\phi_2\|_{C_{\Phi}((-\infty, -T)\times(0, \infty))}
    \\[2mm]
    &\quad+\frac{C}{T^{1/2}\big[\log T\big]^{p-1}}\|h_1-h_2\|_{\Lambda_{T}},
\end{aligned}\end{equation}
where $C$ depends on $\widehat{C}$ in $(\ref{eq4.9})$,  and $p\in(n, n+1]$.
\end{lem}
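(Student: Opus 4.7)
The plan is to split the difference additively as
\begin{equation*}
c(\phi_1, h_1, t)-c(\phi_2, h_2, t)
= \Bigl[c(\phi_1, h_1, t)-c(\phi_2, h_1, t)\Bigr]
+ \Bigl[c(\phi_2, h_1, t)-c(\phi_2, h_2, t)\Bigr],
\end{equation*}
and to estimate the two brackets separately. Both terms will be handled by revisiting the defining relation \eqref{eq3.2} (with the right hand side augmented by $E(t,r)+N(\phi)$, as in \eqref{eq2.14}) and repeating the term by term analysis used to prove Lemma \ref{lem6}, but now applied to differences.

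For the first bracket, $h=h_1$ is fixed, so the weights $\omega'(r-\rho_1(t))$, $\partial_r\widehat\omega(t,r)$, the coefficient $z(t,r)$ and all time derivatives $\partial_t[\omega'(r-\rho_1(t))]$ are identical on both sides of the subtraction. Because every remaining occurrence of $\phi$ in the right hand side of \eqref{eq3.2} is linear, with the nonlinear contribution $N(\phi_1,h_1)-N(\phi_2,h_1)$ coming only through the $g$-term, I can apply \eqref{estimateofc1} of Lemma \ref{lem6} with $\psi:=\phi_1-\phi_2$ and $g:=N(\phi_1,h_1)-N(\phi_2,h_1)$. The linear $\phi$-integrals produce the sharp $|t|^{-3/4}[\log|t|]^{-(p-1)}$ decay, while the $g$-part is absorbed by \eqref{eq4.4} of Lemma \ref{lem8}, which supplies an extra $(\log T)^{-1}$. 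Evaluating at $|t|\geq T$ yields the factor $T^{-3/4}[\log T]^{-(p-1)}$ multiplying $\sum_l\|\partial_r^l(\phi_1-\phi_2)\|_{C_\Phi}$; since $\phi_j\in X_T$ and Proposition \ref{prop2} gives the $\partial_r^l$ derivatives as well, the first bracket is bounded by the first term on the right hand side of \eqref{eq4.8}.

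For the second bracket, $\phi=\phi_2$ is fixed and only $\rho(t)=\gamma_n(t)+h(t)$ varies. I will write the difference of each integrand in \eqref{eq3.2} via the mean value theorem:
\begin{equation*}
\omega^{(l)}(r-\rho_1(t))-\omega^{(l)}(r-\rho_2(t))=\omega^{(l+1)}(\xi)\,(h_2(t)-h_1(t)),\quad l=1,2,3,
\end{equation*}
and, for the $\partial_t[\omega'(r-\rho)]=-\rho'(t)\omega''(r-\rho)$ term, two further contributions: one proportional to $h'_1(t)-h'_2(t)$ and one proportional to $h_1(t)-h_2(t)$ coming from the argument shift. The definition of $\|\cdot\|_{\Lambda_T}$ in \eqref{LambdaTNorm} absorbs both, since $|h'_j(t)|\leq(\log|t|/|t|)\|h_j\|_{\Lambda_T}$. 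A similar mean value expansion handles the dependence of $z(t,r)$ and $\widehat\omega(t,r)$ on $h$. The $g$-part, $(E+N(\phi_2))(t,r,h_1)-(E+N(\phi_2))(t,r,h_2)$, is controlled by \eqref{eq4.5} and \eqref{eq4.7}. Each resulting piece is integrated against $\omega'(r-\rho(t))r^{n-1}$ exactly as in the proof of Lemma \ref{lem6}; the exponential localization of $\omega'$ around $r=\rho(t)$ extracts a factor $[\rho(t)]^{n-1}\sim|t|^{(n-1)/4}$, while the $\Phi$-decay of $\phi_2$ supplies $|t|^{-1/2}[\log|t|]^{-(p-1)}\log|t|$. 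Dividing by the normalising coefficient $[\rho(t)]^{n-1}\int_\R[\omega'(y)]^2dy+O([\gamma_n(t)]^{n-2})$ on the left of \eqref{eq3.2} produces the desired bound $\tfrac{C}{T^{1/2}[\log T]^{p-1}}\|h_1-h_2\|_{\Lambda_T}$.

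The main obstacle is the bookkeeping in the second bracket: several distinct geometric quantities depend on $h$ simultaneously, and the delicate cancellation between $\omega'''+\tfrac{n-1}{r}\omega''-W''(z)\omega'$ (which is small in $\phi$-estimates thanks to \eqref{eqq}) must be preserved after taking differences in $h$. The safe strategy is to expand this combination exactly as in the proof of Lemma \ref{lem6}, isolate the leading power of $\rho(t)$ and of $\log|t|$ in each term, and only then invoke the mean value theorem in $h$. Summing the two brackets gives \eqref{eq4.8}.
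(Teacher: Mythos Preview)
Your proposal is correct and takes essentially the same approach as the paper: the paper's own proof is simply ``Proving this lemma just need to do some similar calculations in Lemmas~\ref{lem6} and~\ref{lem8}, we omit it here,'' and your additive splitting in $(\phi,h)$ together with the term-by-term estimates from those two lemmas is precisely the intended fleshing-out of that sketch. One minor point: your appeal to Proposition~\ref{prop2} to control $\partial_r^l(\phi_1-\phi_2)$ is misplaced---the relevant control comes directly from the definition of $X_T$ in \eqref{eq4.9}, which bounds $\sum_{l=0}^2\|\partial_r^l\phi_j\|_{C_\Phi}$, not from the linear theory.
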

\begin{proof}
Proving this lemma just need to do some similar calculations in Lemmas \ref{lem6} and \ref{lem8},  we omit it here.
\end{proof}

\textbf{Proof of  Proposition \ref{prop3}}
We consider the operator $\mathbf{T}$ defined by (\ref{eq4.2}) from the domain
$X_{T}$ in (\ref{eq4.9}) to itself. We will prove $\mathbf{T}$ is a contraction mapping. Thus by fixed-point theorem,
 the operator $\mathbf{T}$ has a unique fixed point $\phi$,  i.e. $\mathbf{T}(\phi)=\phi$.

For any $\phi_1, \phi_2\in X_{T}$,  according to Lemma \ref{lem10},  Lemma \ref{lem8} and Proposition \ref{prop2},  we find that
\begin{equation*}
\sum_{l=0}^2\big\|\partial^l_r\mathbf{T}(0)\big\|_{C_{\Phi}((-\infty, T_4)\times(0, \infty))}\leq \frac{\widehat{C}}{\log T}
\end{equation*}
and
\begin{equation*}
\sum_{l=0}^2\big\| \partial^l_r \left[\mathbf{T}(\phi_1)-\mathbf{T}(\phi_2)\right]\big\|_{C_{\Phi}((-\infty, T)\times(0, \infty))}
\leq  \frac{\widehat{C}}{\log T}
\| \phi_1-\phi_2\|_{C_{\Phi}((-\infty, T)\times(0, \infty))}.
\end{equation*}
Hence,  $\mathbf{T}$ is a contraction mapping in $X_{T}$ for any $T$ large enough.
Hence,  according to Banach fixed-point theorem,  there exists a unique $\phi\in X_{T}$ such that
$\mathbf{T}(\phi)=\phi$.

Next we will prove the estimate (\ref{eq4.3}). Choosing $h_1, h_2\in \Lambda_{T}$,  according to the above proof,  we know that there exist
$\phi_i=\phi(t, r, h_i)$,  $i=1, 2$,  are two solutions to problem \eqref{eq2.10}-\eqref{eq2.11} with $\rho=\gamma_n+h_i$ respectively.

 We note that $\phi_1-\phi_2$ does not
 satisfy the orthogonality condition (\ref{eq2.11}).
Let us consider a function $\bar{\phi}=\phi_1-\bar{\phi}_2$,  where
\begin{equation*}
  \bar{\phi}_2= \phi_2-\tilde{c}(t)\partial_r\widehat{\omega}_1(t, r),
\end{equation*}
where $\widehat{\omega}_l(t, r):=\omega\big(r-\rho_1(t)\big)\chi(\gamma_n(t)r)$ with $\rho_1(t)=\gamma_n(t)+h_1(t)$,  the cut-off function $\chi$ is defined by \eqref{dcut-off},
and $\tilde{c}(t)$ is defined by the following relation
\begin{equation*}
  \tilde{c}(t)\int_{0}^\infty \partial_r\widehat{\omega}_1(t, r)\omega'\big(r-\rho_1(t)\big)r^{n-1}{\mathrm d}r
=\int_{0}^\infty \phi_2(t, r)\omega'\big(r-\rho_1(t)\big)r^{n-1}{\mathrm d}r,
\end{equation*}
for all $t\leq-T$.
According to the proof of Lemma \ref{lem6},  the coefficient of $ \tilde{c}(t)$ in the left hand side of above equality is strictly positive,
hence the function $\tilde{c}(t)$ is well-defined.

Thus $\bar{\phi}$ satisfies the following problem
\begin{equation*}
\left\{
\begin{array}{l}
\bar{\phi}_t=F'\big(z_1(t, r)\big)[\bar{\phi}]
\,+\,
\big[E(t, r, h_1)-E(t, r, h_2)\big]
\,+\,
\big[N(\phi_1, h_1)-N(\phi_2, h_2)\big]
\\[3mm]
\quad \quad+c_2(t)\big[\partial_r\widehat{\omega}_2(t, r)-\partial_r\widehat{\omega}_1(t, r)\big]
  \,+\,   R(h_1, h_2) \, -\,   \big[c_1(t)-c_2(t)\big]\partial_r\widehat{\omega}_1(t, r)
\\[3mm]
\quad \quad \ \text{in}\ (-\infty, -T)\times(0, \infty),
\\[3mm]
 \int_{0}^\infty \bar{\phi}(t, r)\omega'\big(r-\rho_1(t)\big)r^{n-1}{\mathrm d}r=0,  \qquad  \text{for all}\ t<-T,
 \end{array}
\right.
\end{equation*}
where $\widehat{\omega}_i(t, r):=\omega(r-\rho_i(t))\chi(\gamma_n(t)r)$ with $i=1, 2$,
and the term $R(h_1, h_2)$ is defined by
\begin{align*}
R(h_1, h_2)=&\,\tilde{c}'(t)\partial_r\widehat{\omega}_1(t, r)
+\tilde{c}(t)\partial_t\big[\partial_r\widehat{\omega}_1(t, r)\big]
\\[2mm]
&+\tilde{c}(t)F'(z_1(t, r))\big[\widehat{\omega}_1(t, r)\big]
+F'\big(z_2(t, r)\big)[\phi_2]
-F'\big(z_1(t, r)\big)[\phi_2],
\end{align*}
where $z_i(t, r)$ is defined by \eqref{dz} with $\rho(t)=\gamma_n(t)+h_i(t)$ for $i=1, 2$.

Recall that the linear operator $L[\phi]=F'\big(z(t, r)\big)[\phi]$ is given by
\begin{align*}
L[\phi]:=&-\phi_{rrrr}-\frac{2(n-1)}{r}\phi_{rrr}+\Bigg[2W''\big(z(t, r)\big)-\frac{(n-3)(n-1)}{r^2}\Bigg]\phi_{rr} -\big(W''\big(z(t, r)\big)\big)^2\phi
\\[2mm]
&+\Bigg[\frac{2(n-1)W''\big(z(t, r)\big)}{r}-\frac{(3-n)(n-1)}{r^3}\Bigg]\phi_{r} +2W'''\big(z(t, r)\big)\phi_rz_r+W^{(4)}\big(z(t, r)\big)\abs{z_r}^2\phi
\\[2mm]
&-W'''\big(z(t, r)\big)W'\big(z(t, r)\big)\phi +2W'''\big(z(t, r)\big)z_{rr}\phi+2\frac{n-1}{r}W'''\big(z(t, r)\big)z_r\phi,
\end{align*}
where the approximate solution $z(t,r)$ is defined by \eqref{dz}.

Hence by Proposition \ref{prop2},  the proof of Lemma \ref{lem2},  Lemmas {\ref{lem7}} and {\ref{lem8}},   we have
\begin{align}
 \sum_{l=0}^2 \|  \partial^l_r\bar{\phi}\|_{C_{\Phi}((-\infty, -T)\times(0, \infty))}\leq &C\frac{1}{\log T}\Big\{\|h_1-h_2\|_{\Lambda_T}+\|\phi_1-\phi_2\|_{C_{\Phi}((-\infty, -T)\times(0, \infty))}\Big\}
\nonumber\\[2mm]
&+C\sup_{t\leq -T}\frac{\abs{t}^{1/2}}{\left(\log\abs{t}\right)^{1-p}}\Big(\abs{\tilde{c}(t)}+\abs{\tilde{c}'(t)}\Big).
\label{eq4.10}
\end{align}
By the orthogonality condition (\ref{eq2.11}),  we have
\begin{equation}\label{eq4.11}\begin{aligned}
\abs{\int_{0}^\infty \phi_2(t, r)\partial_r\widehat{\omega}_1(t, r)r^{n-1}{\mathrm d}r}
&=\abs{\int_{0}^\infty \phi_2(t, r)\Big[\partial_r\widehat{\omega}_2(t, r)-\partial_r\widehat{\omega}_1(t, r)\Big]r^{n-1}{\mathrm d}r}
\\[2mm]
&\leq \frac{C}{\big[\log\abs{t}\big]^{p-1}\abs{t}^{\frac{1}{2}}\log T}\|h_1-h_2\|_{\Lambda_{T}}\abs{\gamma_n(t)}^{n-1},
\end{aligned}\end{equation}
where we have used the following fact
\begin{equation*}
  \abs{\frac{\partial_r\widehat{\omega}(t, r)}{\Phi(t, r)}}\leq C\abs{t}^{1/2}\big[\log\abs{t}\big]^{p-1}, \qquad \text{for all} \ r>0,
\end{equation*}
where $\widehat{\omega}(t, r)$ is defined by \eqref{eq2.3}.

We consider
\begin{equation}\label{eq4.12}\begin{aligned}
&\abs{\frac{{\mathrm d}}{{\mathrm d}t}\int_{0}^\infty \phi_2(t, r)\partial_r\widehat{\omega}_1(t, r)r^{n-1}{\mathrm d}r}
=\abs{\frac{{\mathrm d}}{{\mathrm d}t}\int_{0}^\infty \phi_2(t, r)
\big[\partial_r\widehat{\omega}_2(t, r)-\partial_r\widehat{\omega}_1(t, r)\big]r^{n-1}{\mathrm d}r
}.
\end{aligned}\end{equation}
By the definition of $F'\big(z(t, r)\big)[\phi]$ and integration by parts,  we have
\begin{align}
&\int_{0}^\infty (\phi_2)_t\big[\partial_r\widehat{\omega}_2(t, r)-\partial_r\widehat{\omega}_1(t, r)\big]r^{n-1}{\mathrm d}r
\nonumber  \\[2mm]
&=\int_{0}^\infty \Big[(\phi_2)_t-F'\big(z_2(t, r)\big)[\phi]\Big]\Big(\partial_r\widehat{\omega}_2(t, r)-\partial_r\widehat{\omega}_1(t, r)\Big)r^{n-1}{\mathrm d}r
\nonumber\\[2mm]
&\quad+\int_0^\infty\left(-\phi_{rr}-\frac{n-1}{r}\phi_r+W''\big(z_2(t, r)\big)\right)\partial_{rr}\big[\partial_r\widehat{\omega}_2(t, r)
-\partial_r\widehat{\omega}_1(t, r)\big]r^{n-1}{\mathrm d}r
\nonumber  \\[2mm]
&\quad+(n-1)\int_0^\infty\left(-\phi_{rr}-\frac{n-1}{r}\phi_r+W''\big(z_2(t, r)\big)\right)\partial_r\big[\partial_r\widehat{\omega}_2(t, r)-\partial_r\widehat{\omega}_1(t, r)\big]r^{n-2}{\mathrm d}r
\nonumber  \\[2mm]
&\quad+\int_0^\infty W''\big(z_2(t, r)\big)\Big[\phi_{rr}+\frac{n-1}{r}\phi_r-W''\big(z_2(t, r)\big)\phi\Big]\Big(\partial_r\widehat{\omega}_2(t, r)-\partial_r\widehat{\omega}_1(t, r)\Big)r^{n-1}{\mathrm d}r
\nonumber  \\[2mm]
&\quad+\int_{0}^\infty\left(\partial_{rr}z_2+\frac{n-1}{r}\partial_rz_2-W'\big(z_2(t, r)\big)\right) W'''\big(z_2(t, r)\big)\phi\Big(\partial_r\widehat{\omega}_2(t, r)-\partial_r\widehat{\omega}_1(t, r)\Big)r^{n-1}{\mathrm d}r.
\label{eq4.13}
\end{align}
By the previous fixed-point arguments and the above equality \eqref{eq4.13},   we have
\begin{equation}\label{eq4.14}\begin{aligned}
\abs{\int_{0}^\infty r^{n-1}(\phi_2)_t\Big[\partial_r\widehat{\omega}_2(t, r) -\partial_r\widehat{\omega}_1(t, r)\Big]{\mathrm d}r}
\leq
\frac{C}{\log T}\frac{\|h_1-h_2\|_{\Lambda_{T}}}{\abs{t}^{\frac{1}{2}}\big[\log\abs{t}\big]^{p-1}}
\abs{\gamma_n(t)}^{n-1}.
\end{aligned}\end{equation}
By the above estimates (\ref{eq4.11}),  (\ref{eq4.12}),  (\ref{eq4.14}) and the definition of $\tilde{c}(t)$,  we have
\begin{equation*}
  \abs{\tilde{c}(t)}+\abs{\tilde{c}'(t)}\leq \frac{C}{\log T}\frac{\|h_1-h_2\|_{\Lambda_{T}}}{\abs{t}^{\frac{1}{2}}\big[\log\abs{t}\big]^{p-1}},
\qquad \forall\,  t<-T.
\end{equation*}
Hence
\begin{equation*}
 \sum_{l=0}^2 \big\|\partial^l_r \bar{\phi}\big\|_{C_{\Phi}((-\infty, -T)\times(0, \infty))}
 \leq  \frac{C}{\log T}\Big[
\big\|\phi_1-\phi_2\big\|_{C_{\Phi}((-\infty, -T)\times(0, \infty))}+\|h_1-h_2\|_{\Lambda_{T}}\Big],
\end{equation*}
where $C$ is uniform positive constant independent of $T$.

Eventually,  we have that
\begin{align*}
\sum_{l=0}^2\big\| \partial_r^l[\phi_1-\phi_2]\big\|_{C_{\Phi}((-\infty, -T)\times(0, \infty))}
&\leq
\sum_{l=0}^2 \big\| \partial_r^l\bar{\phi}\big\|_{C_{\Phi}((-\infty, -T)\times(0, \infty))}
+C\sup_{t\leq -T}\frac{\abs{t}^{1/2}}{\big(\log\abs{t}\big)^{p-1}}\abs{\tilde{c}(t)}
\\[2mm]
&\leq C \frac{1}{\log T}\Big[
\| \phi_1-\phi_2\|_{C_{\Phi}((-\infty, -T)\times(0, \infty))}+\|h_1-h_2\|_{\Lambda_{T}}\Big],
\end{align*}
 where we choose $T$ large enough. Thus we can obtain the estimate (\ref{eq4.3}).

\section{The reduction procedure: choosing the parameter $h$} \label{sec:tc}
As we stated in Section \ref{section2.2},  the left job is to choose suitable $h(t)$, i.e. $\rho(t)=\gamma_n(t)+h(t)$, such that the function $c(t)$ vanishes in equation (\ref{eq2.10}).

\subsection{Deriving the reduced equation involving $h(t)$}
According to (\ref{eq2.14}),  the relation $c(t)=0$ is equivalent to
\begin{align}\label{eq7.12}
0=&\int_{0}^\infty \Big[\omega'''\big(r-\rho(t)\big)+\frac{n-1}{r}\omega''\big(r-\rho(t)\big)
  -W''\big(z(t, r)\big)\omega'\big(r-\rho(t)\big)\Big]
\nonumber\\[2mm]
&\qquad\qquad \times\left(-\phi_{rr}-\frac{n-1}{r}\phi_r+W''\big(z(t, r)\big)\phi\right) r^{n-1}{\mathrm d}r
\nonumber\\[2mm]
& +\int_{0}^\infty\left[\partial_{rr}z(t, r)
  +\frac{n-1}{r}\partial_{r}z(t, r)-W'\big(z(t, r)\big)\right]\phi \omega'\big(r-\rho(t)\big)r^{n-1}{\mathrm d}r
\nonumber\\[2mm]
& +\int_{0}^\infty \phi(t, r)\partial_t\big[\omega'\big(r-\rho(t)\big)\big]r^{n-1}{\mathrm d}r
 \,+\,
\int_{0}^\infty \big(E(t, r)+N(\phi)\big)\omega'\big(r-\rho(t)\big)r^{n-1}{\mathrm d}r,
\end{align}
where the error term  $E(t, r)$ and the nonlinear term $N(\phi)$ are defined by \eqref{Error}-(\ref{nonlinearterm}),
$z(t, r)$ is given by \eqref{dz} and $\phi,  \phi_r,  \phi_{rr}\in C_{\Phi}((-\infty, -T)\times(0,+\infty))$ defined by \eqref{eq3.3}.
The tedious computations of all terms in \eqref{eq7.12} will be given in the following parts.

\subsubsection{}
We first estimate the projection of the error term $E(t, r)$. According to \eqref{de1} and \eqref{de2},  we have that
\begin{align}
\int_{0}^{+\infty}E(t, r)\omega'\big(r-\rho(t)\big)r^{n-1}{\mathrm d}r
=& \int_{0}^{\delta_0}E(t, r)\omega'\big(r-\rho(t)\big)r^{n-1}{\mathrm d}r
\nonumber\\[2mm]
&+\sum_{l=1}^5\int_{\delta_0}^{+\infty}\widetilde{E}_l(t, r)\omega'\big(r-\rho(t)\big)r^{n-1}{\mathrm d}r,
\label{integerate E}
\end{align}
where $\gamma_n(t)$ is given by \eqref{dgamma-n} with fixed small positive number $\delta_0$,   and  the functions $ \widetilde{E}_1(t, r),  \cdots,  \widetilde{E}_5(t, r)$ are defined as follows
\begin{equation}\label{dee}\begin{aligned}
\widetilde{E}_1(t, r)&:=\omega'\big(r-\rho(t)\big)\left(\rho'(t)+\frac{(n-3)(n-1)^2}{2r^3}\right),
\\[2mm]
\widetilde{E}_2(t, r)&:=-\partial_{rr}\Big(\partial_{rr}\widehat{\omega}(t, r)-W'\big(\widehat{\omega}(t, r)\big)\Big)
+W''\big(\widehat{\omega}(t, r)\big)\Big(\partial_{rr}\widehat{\omega}(t, r)
-W'\big(\widehat{\omega}(t, r)\big)\Big),
\\[2mm]
\widetilde{E}_3(t, r)&:=\frac{2(n-1)}{r}\Big[W''\big(\widehat{\omega}(t, r)\big)-W''\big(\omega\big(r-\rho(t)\big)\big)\Big]\omega'\big(r-\rho(t)\big),
\\[2mm]
\widetilde{E}_4(t, r)&:=F'\big(\widehat{\omega}(t, r)\big)\big[\widetilde{z}(t, r)\big]-\frac{(n-1)(n-3)}{r^2}\left[\frac{(n-3)}{2r}\omega'\big(r-\rho(t)\big)+\omega''\big(r-\rho(t)\big)\right],
\\[2mm]
\widetilde{E}_5(t, r)&:=F''\big(\widehat{\omega}(t, r)+\theta\widetilde{z}(t, r)\big)
\big[\widetilde{z}(t, r), \widetilde{z}(t, r)\big]-\frac{\partial\widetilde{z}(t, r)}{\partial t},
\end{aligned}\end{equation}
where the functions $\widehat{\omega}(t, r)$ and $\widetilde{z}(t, r)$ are defined by \eqref{eq2.3} and \eqref{dzt} respectively.

For the first term in the right hand side of \eqref{integerate E},  by Lemma \ref{lem10},  we have that
\begin{equation}\label{intE_1}\begin{aligned}
\abs{\int_{0}^{\delta_0}E(t, r)\partial_r\big[\omega\big(r-\rho(t)\big)\big]r^{n-1}{\mathrm d}r}
&\leq C \int_{0}^{\delta_0}\Phi(t, r)\omega'\big(r-\rho(t)\big)r^{n-1}{\mathrm d}r
\\[2mm]
&\leq C\frac{\,  \big[\rho(t)\big]^{n-1}\log\abs{t}}{\abs{t}^{1/2}\, }\exp\Bigg\{-\frac{\gamma_n(t)}{2}\Bigg\},
\end{aligned}\end{equation}
where $\gamma_n(t)$ is defined by \eqref{dgamma-n} and $C>0$ only depends on $n$
and $\alpha=\sqrt{W''(1)}$.

Next we will estimate other terms in the right hand side of \eqref{integerate E}. Using the definition of $\widehat{\omega}(t, r)$ in \eqref{eq2.3}
 and equation in \eqref{eqq},  we have that
 \begin{equation*}
   \widetilde{E}_2(t, r)=\widetilde{E}_3(t, r)=0, \qquad \text{for all}\ r>\delta_0.
 \end{equation*}

For $\widetilde{E}_1(t, r)$ in \eqref{dee},  by Lemma \ref{lem1},  we have that
\begin{align}
&\int_{\delta_0}^\infty\widetilde{E}_1(t, r)\omega'\big(r-\rho(t)\big)r^{n-1}{\mathrm d}r
\nonumber\\[2mm]
&= \int_{\delta_0}^\infty\omega'\big(r-\rho(t)\big)\left(\rho'(t)+\frac{(n-3)(n-1)^2}{2r^3}\right)\omega'\big(r-\rho(t)\big)r^{n-1}{\mathrm d}r
\nonumber\\[2mm]
&=\left(\rho'(t)+\frac{(n-3)(n-1)^2}{2\rho^3(t)}
\right)\big[\rho(t)\big]^{n-1}\int_{\R}\big[\omega'(x)\big]^2{\mathrm d}x
 \,+\,
O\left(\frac{\big[\rho(t)\big]^{n-1}\log\abs{t}}{\abs{t}^{5/4}}\right),
\label{ese1}
\end{align}
where we have used the fact that
$$
\int_\R[\omega'(y)]^2y{\mathrm d}y=0,
$$
in the last equality.

Next we estimate the terms $\widetilde{E}_4(t, r)$ given in \eqref{dee},  by the definition of $\widetilde{z}(t, r)$ in \eqref{dzt} and the same arguments in proof of Lemma \ref{lem10},  \eqref{eq2.13}, for all $r>\delta_0$,  we have that
\begin{align*}
\widetilde{E}_4(t, r)&=\frac{(n-1)(n-3)}{r^3}
\Bigg\{4\partial_{rrr}\widetilde{\omega}\big(r-\rho(t)\big)-4W''\big(\omega\big(r-\rho(t)\big)\big)\partial_r\widetilde{\omega}\big(r-\rho(t)\big)
 -\frac{(n-3)}{2}\omega'\big(r-\rho(t)\big)
 \\[3mm]
&\qquad\qquad\qquad-2(n-3)\Big[\partial_{rrr}\widetilde{\omega}\big(r-\rho(t)\big)-W''\big(\omega\big(r-\rho(t)\big)\big)\partial_{r}\widetilde{\omega}\big(r-\rho(t)\big)
\omega'\big(r-\rho(t)\big)
\\[3mm]
&\qquad\qquad\qquad\qquad
-W'''\big(\omega\big(r-\rho(t)\big)\big)\widetilde{ \omega}\big(r-\rho(t)\big)\Big]\Bigg\}
\\[3mm]
&\quad+\frac{1}{r^4}\Bigg\{\Big[12(n-4)-(n-1)(n-3)\Big]\partial_{rr}\widetilde{\omega}\big(r-\rho(t)\big)
-4(n-4)W''\big(\omega\big(r-\rho(t)\big)\big)\widetilde{\omega}\big(r-\rho(t)\big)\Bigg\}
\\[3mm]
&\quad+\rho'(t)\frac{(n-1)(n-3)}{r^2}\widetilde{\omega}'\big(r-\rho(t)\big)
 \,+\,
O\left(\frac{1}{r^5}\sum_{j=1}^ke^{-\frac{\alpha}{2}}\abs{r-\rho(t)}\right),
\end{align*}
where we have used that $\widetilde{\omega}(x)$ in \eqref{w1} and its derivatives have exponential decay.

Thus,  by same arguments in above estimate of the projection of $\widetilde{E}_2(t, r)$ and the equalities in \eqref{eq2.1},  \eqref{w1} and \eqref{eqq},
integrating by parts,  we have that
\begin{align*}
&\int_{\delta_0}^\infty\widetilde{E}_4(t, r)\omega'\big(r-\rho(t)\big)r^{n-1}{\mathrm d}r
\\[2mm]
&=4(n-3)(n-1)\int^\infty_{\frac{\gamma_n}{2}-\rho(t)}\omega'(y)
\Big[\widetilde{\omega}'''(y)-W''\big(\omega(y)\big)\widetilde{\omega}'(y)\Big]\big[y+\rho(t)\big]^{n-4}{\mathrm d}y
\\[2mm]
&\quad+\frac{(n-3)^2(n-1)}{2}\int^\infty_{\delta_0-\rho(t)}\omega'(y)
\Big[2y\omega''(y)+\omega'(y)\Big]\big[y+\rho(t)\big]^{n-4}{\mathrm d}y
 \,+\,
\int^\infty_{\delta_0-\rho}\frac{\omega'(y)\big[y+\rho(t)\big]^{n-6}}{\exp\Big\{\alpha\frac{\abs{y}}{2}\Big\}}{\mathrm d}y
 \\[2mm]
&\quad \,+\,  \int^\infty_{\delta_0-\rho(t)}\Bigg[\Big(12(n-4)-4(n-1)(n-3)\Big)\widetilde{\omega}''(y)
-4(n-4)W''\big(\omega(y)\big)\widetilde{\omega}(y)\Bigg]
\omega'(y)\big[y+\rho(t)\big]^{n-4}{\mathrm d}y
\\[2mm]
&\quad+\frac{\rho'(t)}{\big[\rho(t)\big]^2}\int^\infty_{\delta_0-\rho(t)}\widetilde{\omega}'(y)\omega'(y)dy
  \\[2mm]
&=O\left(\frac{\big[\rho(t)\big]^{n-1}}{\abs{t}^{5/4}}\right),
\end{align*}
where we have used \eqref{eq2.4},  \eqref{eq2.1},  \eqref{dwtd} and the fact that
\begin{equation*}
  \int_\R \omega'(y)\Big[2y\omega''(y)+\omega'(y)\Big]{\mathrm d}y=0.
\end{equation*}

For the term $\widetilde{E}_5(t, r)$ in \eqref{dee},  by the definitions in \eqref{dzt} and \eqref{dFs},
the properties of $W$ in \eqref{eq1.4} and  the oddness of $\widetilde{\omega}$ in \eqref{w1} respectively,  and the equalities in \eqref{eq2.1},  we get that
\begin{align*}
  &\int_{\delta_0}^\infty\widetilde{E}_5(t, r)\omega'\big(r-\rho(t)\big)r^{n-1}{\mathrm d}r
  \\[2mm]
&=\int_{\delta_0}^\infty\Bigg\{ \partial_{rr}\left[W'''\big(\omega\big(r-\rho(t)\big)\big)\big(\omega\big(r-\rho(t)\big)\big)^2\right]-W'''\big(\omega\big(r-\rho(t)\big)\big)W''\big(\omega\big(r-\rho(t)\big)\big)\big(\omega\big(r-\rho(t)\big)\big)^2
 \\[2mm]
&  \quad +2\Big[\partial_{rr}\widetilde{\omega}\big(r-\rho(t)\big)-W''\big(\omega\big(r-\rho(t)\big)\big)\widetilde{\omega}\big(r-\rho(t)\big)\Big]
W'''\big(\omega(r-\rho(t)\big)\widetilde{\omega}\big(r-\rho(t)\big)\Bigg\}\omega'\big(r-\rho(t)\big)r^{n-5}{\mathrm d}r
  \\[2mm]
& \quad  +O\left(\int^\infty_{\delta_0-\rho(t)}\frac{\omega'(y)\big[y+\rho(t)\big]^{n-6}}{\exp\Big\{\alpha\frac{\abs{y}}{2}\Big\}}{\mathrm d}y \right)
  \\[2mm]
  &=O\left( \frac{\big[\rho(t)\big]^{n-1}}{\abs{t}^{5/4}}\right).
\end{align*}

\subsubsection{ }
We next estimate other terms in the right hand side of  \eqref{eq7.12}.
By $\phi\in C_{\Phi}((-\infty, -T)\times(0, +\infty))$  and the definitions of $\Phi(t, r)$
and $\rho(t)$ in \eqref{dpsi} and \eqref{drho1} respectively,  we have that
\begin{align*}
  &\int_{0}^{\infty}\abs{N(\phi)}\omega'\big(r-\rho(t)\big)r^{n-1}{\mathrm d}r
\\[3mm]
&\leq C\int_{0}^\infty\big(\Phi(t, r)\big)^2\omega'\big(r-\rho(t)\big)r^{n-1}{\mathrm d}r
\\[3mm]
&\leq C\frac{\big[\log\abs{t}\big]^2} {\abs{t}}\int^{+\infty}_{\frac{\gamma_n(t)}{4}-\rho(t)}
\left (1+\abs{x+\rho(t)-\frac{\alpha}{3}\log\abs{t}}\right )^{-2p}\big[x+\rho(t)\big]^{n-1}\omega'(x){\mathrm d}x
\\[3mm]
&\quad +C\frac{\big[\log\abs{t}\big]^2}{\abs{t}}\big[\rho(t)\big]^{n-1}\exp\Bigg\{-\frac{\gamma_n(t)}{2}\Bigg\}.
\end{align*}
Thus,  by Lemma \ref{lem1} and direct computations,  we have that
\begin{equation}\label{en}
  \int_{0}^{\infty}\abs{N(\phi)}\omega'\big(r-\rho(t)\big)r^{n-1}{\mathrm d}r\leq C \frac{\big[\rho(t)\big]^{n-1}}{\abs{t}\big[\log\abs{t}\big]^{2(p-1)}},
\end{equation}
where $C$ only depends on $\alpha,  n$ and $\|h\|_{L^\infty}$.

We estimate the first and second term in \eqref{eq7.12}.
According to Taylor expansion and \eqref{eqq},  the same argument as the proof of Lemma \ref{lem2},  we have that
\begin{align*}
\Bigg{|}&\int_{0}^\infty\Big[-\omega'''\big(r-\rho(t)\big)
-\frac{n-1}{r}\omega''\big(r-\rho(t)\big)
+W''\big(z(t, r)\big)\omega'\big(r-\rho(t)\big)\Big]
\\[3mm]
&\qquad\qquad \times\left[\phi_{rr}+\frac{n-1}{r}\phi_{r}-W''\big(z(t, r)\big)\phi\right]
r^{n-1}{\mathrm d}r
\\[3mm]
&\  +\int_{0}^\infty\left[\partial_{rr}z(t, r)+\frac{n-1}{r}\partial_{r}z(t, r)-W'\big(z(t, r)\big)\right]W'''\big(z(t, r)\big)\phi\omega'\big(r-\rho(t)\big)r^{n-1}{\mathrm d}r\Bigg{|}
\\[3mm]
&\leq C\int_{-\rho(t)}^\infty\abs{\left[\omega^{(4)}(r)-W''\big(\omega(r)\big)\omega''(r)-W'''\big(\omega(r)\big)
[\omega'(r)]^2\right]\phi\big(t, r+\rho(t)\big)}\big[r+\rho(t)\big]^{n-1}{\mathrm d}r
\\[3mm]
&\leq C\frac{\big[\rho(t)\big]^{n-1}}{\abs{t}\big[\log\abs{t}\big]^{p-2}}.
\end{align*}
By Lemma \ref{lem1} and the definition of $\rho$ in (\ref{drho1}),  similar arguments as in \eqref{en},  we have
\begin{align*}
\abs{\rho'(t)\int_{0}^\infty \phi(t, r)\omega''\big(r-\rho(t)\big)r^{n-1}{\mathrm d}r}
&\leq \frac{C}{\abs{t}^{3/4}}
 \abs{ \int_{0}^\infty \Phi(t, r)\omega''\big(r-\rho(t)\big)r^{n-1}{\mathrm d}r}
\\[2mm]
&
  \leq C \frac{\big[\rho(t)\big]^{n-1}}{\abs{t}^{5/4}}.
\end{align*}

Combining the above estimates,   we can obtain that
(\ref{eq7.12}) is equivalent to the following ODE:
\begin{equation}\label{eq7.13}\begin{aligned}
  \rho'(t)+\frac{(n-3)(n-1)^2}{2\rho^3(t)}=Q\big(\rho(t), \rho'(t)\big),
\end{aligned}\end{equation}
for all $t<-T$, where
we recall that $\rho(t)=\gamma_n(t)+h(t)$,  and $\gamma_n(t)$ is given by \eqref{dgamma-n}.
The function $h(t)$ belongs to the set $\Lambda_T$ with  $T>T_2$,  where $T_2$ is given by Proposition \ref{prop3}
and we also recall the following two definitions in \eqref{LambdaT}-\eqref{LambdaTNorm}
$$
\Lambda_T=\left\{h(t): h\in C^1(-\infty,  -T]
\quad \text{and}\quad
\|h\|_{\Lambda_T}< 1  \right\},
$$
where
\begin{equation}\label{eq7.16}
  \|h\|_{\Lambda_T}=\sup_{t\leq-T}\abs{h(t)}+\sup_{t\leq-T}\left[\frac{\abs{t}}{\log\abs{t}}\abs{h'(t)}\right].
\end{equation}

According to the above arguments,   Proposition \ref{prop3} and Lemma \ref{lem10},  we have that
\begin{prop}\label{prop1}
Let $p\in(n, n+1]$ and $P(h(t), h'(t)):=Q\big(\rho(t), \rho'(t)\big)$ in \eqref{eq7.13}. Then for all $h,  h_1,  h_2\in\Lambda_T$,  there hold that
\begin{equation*}
  \abs{P(h, h')}\leq \frac{C(n, \alpha)}{\abs{t}\big[\log\abs{t}\big]^{2(p-1)}}
\end{equation*}
and
\begin{equation*}
  \abs{P\big({h}_1, (h_1)'\big)-P\big(h_2, (h_2)'\big)}\leq \frac{C(n, \alpha)}{\abs{t}\big[\log\abs{t}\big]^{2(p-1)}}\|h_1-h_2\|_{\Lambda_T},
\end{equation*}
where we recall that $\alpha=\sqrt{W''(1)}>0$ and $C(n, \alpha)$ is an uniform constant depending on $n$ and $\alpha$.
\end{prop}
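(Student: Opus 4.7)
The plan is to read $P(h,h')$ off the relation $c(t)=0$ written in expanded form as \eqref{eq7.12}. Dividing that identity by the uniformly positive coefficient $[\rho(t)]^{n-1}\int_{\R}[\omega'(y)]^{2}\,dy$ which arose as the leading contribution of $\widetilde{E}_{1}$ in \eqref{ese1}, and transporting $\rho'(t)+\frac{(n-3)(n-1)^{2}}{2\rho^{3}(t)}$ to the left, one is left on the right with a sum of residuals already identified in this subsection: the subleading remainder of the $\widetilde{E}_{1}$-projection; the projections of $\widetilde{E}_{2},\widetilde{E}_{3},\widetilde{E}_{4},\widetilde{E}_{5}$; the inner piece $\int_{0}^{\delta_{0}}E(t,r)\omega'(r-\rho)r^{n-1}\,dr$; the projection of $N(\phi)$; the two $\phi$-linear integrals with kernels $\omega'''+\tfrac{n-1}{r}\omega''-W''(z)\omega'$ and $\partial_{rr}z+\tfrac{n-1}{r}\partial_{r}z-W'(z)$; and the time-derivative contribution $\rho'(t)\int\phi\,\omega''(r-\rho)r^{n-1}\,dr$. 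I would then set $P(h,h')$ equal to minus this sum divided by the coefficient above.

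For the pointwise bound I would simply plug in the estimates already assembled in the subsection: the inner piece is exponentially small by \eqref{intE_1}; the remainders of $\widetilde{E}_{1}$, $\widetilde{E}_{4}$, $\widetilde{E}_{5}$ and the $\rho'\phi\omega''$ term are all $O\bigl([\rho(t)]^{n-1}|t|^{-5/4}\bigr)$; the two $\phi$-linear integrals yield $O\bigl([\rho(t)]^{n-1}|t|^{-1}[\log|t|]^{-(p-2)}\bigr)$ via $\phi\in X_{T}$ from Proposition~\ref{prop3}; and the $N(\phi)$-projection is $O\bigl([\rho(t)]^{n-1}|t|^{-1}[\log|t|]^{-2(p-1)}\bigr)$ by \eqref{en}. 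After dividing through by $[\rho(t)]^{n-1}$ the slowest decay is the one originating from $N(\phi)$, which produces the announced first inequality.

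For the Lipschitz bound I would split $P(h_{1},h_{1}')-P(h_{2},h_{2}')$ into the corresponding differences of each piece. The difference of the error-projections is controlled by \eqref{eq4.5} in Lemma~\ref{lem8}; for the nonlinear piece I would write $N(\phi_{1},h_{1})-N(\phi_{2},h_{2})=[N(\phi_{1},h_{1})-N(\phi_{2},h_{1})]+[N(\phi_{2},h_{1})-N(\phi_{2},h_{2})]$ and combine \eqref{eq4.4} with the Lipschitz dependence \eqref{eq4.3} of $\phi(\cdot,\cdot,h)$ on $h$ from Proposition~\ref{prop3}; the $\phi$-linear and kernel differences are handled by the smooth dependence of $\omega^{(k)}(r-\rho)$, $\widetilde{\omega}^{(k)}(r-\rho)$ and $z$ on $\rho=\gamma_{n}+h$, which produces bounded Lipschitz factors already absorbed by the existing decay rates. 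The main obstacle is the bookkeeping on the logarithmic exponents: the weight $\Phi$ decays like $(1+|r-\gamma_{n}(t)-\tfrac{1}{4\alpha}\log|t||)^{-p}$, so $\Phi^{2}$ integrated against $r^{n-1}$ gives $[\log|t|]^{-2(p-1)}$ whereas $\Phi\cdot\omega^{(k)}(r-\rho)$ gives only $[\log|t|]^{-(p-1)}$. The discrepancy forces one to exploit the $(\log T)^{-1}$ factor carried by $\phi\in X_{T}$ in the linear-in-$\phi$ terms, and similarly the product of two such factors upgrades the $N(\phi)$ contribution to the correct $[\log|t|]^{-2(p-1)}$. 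Once these logarithmic factors are properly tracked, both inequalities of the proposition follow.
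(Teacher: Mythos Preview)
Your proposal follows the same route as the paper, which offers no separate argument for Proposition~\ref{prop1} beyond the sentence ``According to the above arguments, Proposition~\ref{prop3} and Lemma~\ref{lem10}''; you have simply unpacked that sentence by listing the individual contributions to \eqref{eq7.12}, quoting the bounds already derived in Section~\ref{sec:tc} for each of them, and invoking Lemma~\ref{lem8} together with \eqref{eq4.3} for the Lipschitz estimate. This is exactly what the paper intends, so your outline is correct and aligned with the paper's own reasoning.
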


\subsection{Solving the reduced equation involving $h$}
In the rest of this section we will study the ODE in \eqref{eq7.13}. We look for solutions of \eqref{eq7.13} of the form
$\rho(t)=\gamma_n(t)+h(t)$,  then $h(t)$ satisfies that
\begin{equation*}
  h'(t)+\gamma'_n(t)+\frac{(n-3)(n-1)^2}{2\big[\gamma_n(t)+h(t)\big]^3}=P\big(h(t), h'(t)\big)\qquad \text{in}\ (-\infty, -\widehat{T}_0],
\end{equation*}
where $\widehat{T}_0>T_2$,  where $T_2$ is given by Proposition \ref{prop3}. By the definition of $\gamma_n(t)$ in \eqref{dgamma-n},  the above equation is equivalent to
\begin{equation}\label{eq7.14}
  h'(t)+\frac{3h(t)}{4t}=\widetilde{P}\big(h(t), h'(t)\big)\qquad \text{in}\ (-\infty, -\widehat{T}_0],
\end{equation}
where
\begin{equation}\label{dpt}
  \widetilde{P}\big(h(t), h'(t)\big):=P\big(h(t), h'(t)\big)+\frac{(n-3)(n-1)^2}{2}\left[\frac{1}{\big(\gamma_n(t)\big)^3}-\frac{1}{\big(\gamma_n(t)+h(t)\big)^3}
  -\frac{3h(t)}{\big(\gamma_n(t)\big)^4}\right].
\end{equation}
We will solve equation \eqref{eq7.14} by applying the fixed-point theorem in a suitable space with $h(-\widehat{T}_0)=0$.
It is easily to check that if $h(t)$ is a solution of \eqref{eq7.14} with initial data $0$,  then it has the form
\begin{equation}\label{eq7.17}
  h(t)=-\frac{1}{{(-t)}^{\frac{3}{4}}}\int^{-T_0}_t{(-s)}^{\frac{3}{4}} \widetilde{P}\big(h(s), h'(s)\big)\mathrm{d}s,
\end{equation}
with $t\leq-\widehat{T}_0$.

Let us define two operators as following
\begin{equation*}
\mathcal{ P}(h(t)):=-\frac{1}{{(-t)}^{\frac{3}{4}}}\int^{-\widehat{T}_0}_t{(-s)}^{\frac{3}{4}} \widetilde{P}\big(h(s), h'(s)\big)\mathrm{d}s
\qquad \text{and}\qquad
\mathcal{ P}'(h(t)):=\partial_t \mathcal{ P}(h(t)).
\end{equation*}
Then using Proposition \ref{prop1} and \eqref{dpt},  we have that
\begin{equation}\label{eq7.15}
  \abs{\mathcal{ P}(0)}\leq\frac{ \widetilde{C}(n, \alpha)}{\log \widehat{T}_0}\quad \text{and}\quad
  \frac{\abs{t}}{\log\abs{t}}\abs{\mathcal{ P}'(0)}\leq\frac{ \widetilde{C}(n, \alpha)}{\log \widehat{T}_0},
\end{equation}
with $\widehat{T}_0>e^{2}$,  where $\widetilde{C}(n, \alpha)$ is a positive constant depending on $n$ and $\alpha$.
We consider the domain
\begin{equation*}
  Y:=\left\{h(t)\in C^1(-\infty, -\widehat{T}_0]\ :\ \|h(t)\|_{\Lambda_{\widehat{T}_0}}\leq \frac{2\widetilde{C}(n, \alpha)}{\log \widehat{T}_0}\right\},
\end{equation*}
where the norm $\|\cdot\|_{\Lambda_T}$ is given by \eqref{eq7.16} and $\widetilde{C}(n, \alpha)$ is a positive constant in \eqref{eq7.15}.

According to Proposition \ref{prop1} and \eqref{dpt},  we get that for all $h_1, h_2\in Y$,
\begin{equation*}
  \abs{\mathcal{ P}(h_1)-\mathcal{ P}(h_2)}\leq\frac{C(n, \alpha)}{\log \widehat{T}_0} \|h_1(t)-h_2(t)\|_{\Lambda_{\widehat{T}_0}}
\end{equation*}
and
\begin{equation*}
  \abs{\mathcal{ P}'(h_1)-\mathcal{ P}'(h_2)}\leq\frac{C(n, \alpha)}{\log \widehat{T}_0} \|h_1(t)-h_2(t)\|_{\Lambda_{\widehat{T}_0}},
\end{equation*}
where $C(n, \alpha)>0$ only depends on $n$ and $\alpha$. Thus by Banach fix point theorem,   there exists $h(t)\in Y$ such that
$\mathcal{P}(h(t))=h(t)$,  if we choose $\widehat{T}_0$ big enough. Thus we proved that the ODE in \eqref{eq7.13} is solvable.
Furthermore,  by the formula in \eqref{eq7.17},  we get that
\begin{equation*}
  \abs{h(t)}\leq\frac{C}{\log\abs{t}},  \qquad \text{as} \ t\rightarrow-\infty.
\end{equation*}



\end{document}